\definecolor{dmagenta}{rgb}{.5,0,.5} 
\definecolor{dred}{rgb}{.5,0,0} 
\definecolor{dgreen}{rgb}{0,.5,0} 
\definecolor{blue}{rgb}{0,0,0.5} 
\definecolor{black}{rgb}{0,0,0} 
\definecolor{vdgreen}{rgb}{0,.3,0} 
\definecolor{vdred}{rgb}{.3,0,0} 
\definecolor{red}{rgb}{1,0,0} 
\newcommand{\lplus}{{\mathfrak{h}}}        
\newcommand\cO{{\mathcal{O}}} 
\newcommand\cB{\mathcal{B}}    
\newcommand\cLO{{\mathcal{LO}}}    
\newcommand\cL{\mathcal L}             
\newcommand{\Assoc}{\mathcal{A}ssoc}  
\newcommand{\Lie}{\mathcal{L}ie}  
\newcommand{\Com}{\mathcal{C}om}  
\newcommand{\Q}{{\mathbb{Q}}}
\newcommand{\R}{{\mathbb{R}}}
\newcommand{\F}{{\mathds{k}}}
\newcommand{\Z}{{\mathbb{Z}}}
\newcommand{\C}{{\mathbb{C}}}
\newcommand{\hairy}{\mathcal H}  
\newcommand{\phairy}{\mathcal{PH}} 
\newcommand{\G}{{\bf G}}              
\newcommand{\ext}{\bigwedge\nolimits}
\newcommand{\bdry}{\partial}
\newcommand{\Wedge}{\bigwedge}
\newcommand{\id}{\mathrm{Id}}   
\DeclareMathOperator{\Tr}{Tr}  
\DeclareMathOperator{\SP}{Sp}  
\DeclareMathOperator{\Out}{Out} 
\DeclareMathOperator{\GL}{GL}  
\DeclareMathOperator{\Aut}{Aut}  
\DeclareMathOperator{\Mod}{Mod}  
\DeclareMathOperator{\im}{im}  
\DeclareMathOperator{\SL}{SL}
\let\ker\undefined
\DeclareMathOperator{\ker}{ker}  
\DeclareMathOperator*{\invlim}{\varprojlim\rule[-2ex]{0ex}{4.5ex}}  
\DeclareMathOperator*{\dirlim}{\varinjlim\rule[-2ex]{0ex}{4.5ex}}  
\DeclareMathOperator{\spf}{\mathfrak{sp}}
\DeclareMathOperator{\Hom}{Hom}
\newcommand{\funct}{\rightsquigarrow}
\newcommand{\SF}[1]{{\mathbb S}_{#1}}  
\newcommand{\SpF}[1]{{\mathbb S}_{\langle#1\rangle}}  
\newcommand{\degree}[1]{{[\![#1]\!]}}
\newcommand{\dash}{{\mbox{--}}}
\newtheorem{proposition}{Proposition}[section]
\newtheorem{definition}[proposition]{Definition}
\newtheorem{theorem}[proposition]{Theorem}
\newtheorem{lemma}[proposition]{Lemma}
\newtheorem{theoremB}{Theorem}
\newtheorem{theoremC}{Theorem}
\newtheorem{theoremA}{Theorem}
\newtheorem{theoremD}{Theorem}
\newtheorem{theoremE}{Theorem}
\newtheorem{theoremF}{Theorem}
\theoremstyle{remark}
\newtheorem{remark}[proposition]{Remark}
\newtheorem*{example}{Example}
\newtheoremstyle{red}{3pt}{3pt}{\color{red}}{}{\itshape}{.}{.5em}{}
\theoremstyle{red}
\title{Hairy graphs and the unstable homology\\ of $\Mod(g,s), \Out(F_n)$ and $\Aut(F_n)$\\
}
\author{Jim Conant}
\author{Martin Kassabov}
\author{Karen Vogtmann}
\begin{document}
\begin{abstract}
We study a  family of Lie algebras $\lplus{\cO}$ which are defined for cyclic operads $\cO$.  Using his graph homology theory, Kontsevich identified the homology of two of these Lie algebras (corresponding to the Lie and associative operads)  with the cohomology of outer automorphism groups of free groups  and mapping class groups of punctured surfaces, respectively.  In this paper we introduce  a {\it hairy graph homology} theory for $\cO$.  We show that the homology of $\lplus\cO$ embeds in hairy graph homology via a {\it trace} map which generalizes the trace map defined by S. Morita.  For the Lie operad we use the trace map to find large new summands of the abelianization of $\lplus{\cO}$  which are related to classical modular forms for $\SL_2(\Z)$.  Using cusp forms we construct new cycles for the unstable homology of $\Out(F_n)$, and using Eisenstein series we find new cycles for $\Aut(F_n)$.  For the associative operad we compute the first homology of the hairy graph complex by adapting an argument of Morita, Sakasai and Suzuki, who determined the complete abelianization of $\lplus\cO$ in the associative case. 

\end{abstract}
\maketitle
\section{Introduction}
\label{sec:Introduction}

A wide variety of mathematical and physical phenomena can be modeled using trees and graphs, and it is not surprising that structures based on trees and graphs play an important role in analyzing these phenomena. One such structure is the Lie algebra $\lplus_V$ generated by finite planar trivalent trees whose leaves are decorated with elements of a symplectic vector space $V$ (see Section~\ref{sec:review} for a precise definition of $\lplus_V$.)   
When one stabilizes by letting the dimension of $V$ go to infinity, the resulting object $\lplus_\infty$  is a fascinating and complicated Lie algebra, which arises naturally in several areas of topology and geometric group theory.  These include  the study of the Johnson filtration of the mapping class group of a surface \cite{J,Mor},    finite-type invariants of 3-manifolds  \cite{CST, GL,L}, and   the homology of outer automorphism groups of free groups  \cite{CV, Ko2,Ko1}.

An important first step in understanding  $\lplus_\infty$ is to determine its abelianization. S. Morita \cite{Morita} constructed a large abelian quotient of $\lplus_V$ via his {\it trace map}, which takes values in the polynomial algebra   $\F[V].$    He  conjectured that the image of the stabilized trace map is isomorphic to the entire abelianization of $\lplus_\infty$~\cite[Conjecture 6.10]{Morita}. However, in this paper we will show that  the abelianization is in fact quite a bit larger than this, with much richer structure than anticipated.   One use Morita made of his trace map was to construct a series of elements of $H^*(\lplus_\infty)$, which he then identified with elements of   $H_*(Out(F_n))$ using a theorem of M. Kontsevich \cite{Ko2,Ko1}.  The new parts we find of the abelianization of $\lplus_\infty$ allow us to construct many new homology classes for $Out(F_n)$.

Morita's trace can be re-interpreted in terms of graphs, as was done in \cite{CVMorita}; the natural target of the trace is then a vector space spanned by graphs consisting of one oriented cycle with some ``hairs" attached, labeled by elements of $V$.  This point of view leads to the central construction of this paper, which is an extension of the trace map to take into account graphs of higher rank.   The domain of this extension is the Chevalley-Eilenberg chain complex $C_\bullet(\lplus_V)$, and the target  is a ``hairy graph" complex $\mathcal H_V$, spanned by oriented graphs with labeled hairs attached (see section~\ref{complex}).   

Our first main theorem is that after stabilization, this new trace map  captures all of the abelianization of $\lplus_\infty$ and in fact does even better, capturing the higher homology groups as well: 

 \begin{theoremA}\label{thm:intro-injective}
 $\Tr\colon C_\bullet(\lplus_V)\to \mathcal H_V$  is a chain map which induces an injection on homology $H_*(\lplus_\infty)\hookrightarrow H_*(\mathcal H_\infty).$
 \end{theoremA}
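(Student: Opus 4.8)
The plan is to prove this in two stages: first verify that $\Tr$ is a chain map, then establish injectivity on stabilized homology by constructing a retraction up to homotopy.

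\medskip

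\textbf{Step 1: $\Tr$ is a chain map.} First I would pin down the definition of $\Tr$ on a generator of $C_\bullet(\lplus_V)$, i.e. on a wedge $X_1\wedge\cdots\wedge X_k$ of elements of $\lplus_V$, each $X_i$ being (a linear combination of) trivalent trees with $V$-labeled leaves. The trace of a single tree should be obtained by gluing two leaves together to form an edge, summing over all ways to do this and contracting the corresponding pair of $V$-labels via the symplectic form; more generally on a product one also allows gluings between leaves of different factors, producing a hairy graph whose internal structure records which trees were joined. The Chevalley--Eilenberg differential has two pieces: the bracket term $\sum \pm [X_i,X_j]\wedge(\text{rest})$ and (in the version that sees higher homology) the internal differential coming from the Lie structure on each $\lplus_V$. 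I would check that applying $\Tr$ after $\partial$ agrees with applying the hairy-graph differential after $\Tr$ by a direct matching of terms: a gluing performed \emph{after} taking a bracket corresponds to a gluing-and-contraction performed inside a single hairy graph, and the signs match by the usual Koszul bookkeeping. This is the routine part; the only subtlety is making sure the symplectic contraction is compatible with the IHX/antisymmetry relations defining $\lplus_V$, which should follow because the symplectic form is the unique (up to scalar) invariant pairing.

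\medskip

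\textbf{Step 2: injectivity after stabilization.} The essential idea is that the hairy graph complex $\hairy_V$ carries an action of the symplectic group $\SP(V)$, and under stabilization $V\to V_\infty$ the $\SP$-invariant (or, more precisely, the ``top weight'' / defect-zero) part of $\hairy_V$ is exactly a complex computing $H_*(\lplus_\infty)$ back again. Concretely, I would filter $\hairy_V$ by the number of hairs and observe that the associated graded splits off a summand built from graphs with \emph{no} hairs --- these are precisely Kontsevich's graph complex for the Lie operad, whose relation to $C_\bullet(\lplus_V)$ is classical. One then wants a map $\hairy_V \to C_\bullet(\lplus_V)$ (a ``cut'' or ``reading'' map: cut each internal edge of a hairy graph to turn it into a disjoint union of trees, i.e. an element of a symmetric power of $\lplus_V$) such that the composite $C_\bullet(\lplus_V)\to\hairy_V\to C_\bullet(\lplus_V)$ is, after passing to $V_\infty$ and to homology, the identity. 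The point of stabilization is that cutting an edge and regluing reproduces the original element up to lower-order terms in the hair filtration, which die in the limit; this is the same mechanism by which Morita's trace captures the full abelianization, now promoted from $H_0$ to all $H_*$.

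\medskip

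\textbf{Main obstacle.} I expect the hard part to be Step 2: constructing the cut map so that it is genuinely a chain map (the hairy-graph differential must be compatible with cutting, which requires care about edges that become separating), and then proving that the composite is homotopic to the identity \emph{in the stable range}. The homotopy itself should be built from the ``reglue one pair of cut ends'' operation, and controlling its error terms via the hair-number filtration is where the real work lies --- one needs a spectral-sequence or direct filtration argument showing the off-diagonal pieces vanish as $\dim V\to\infty$. Signs and the precise normalization of the symplectic contraction will need to be tracked throughout, but the conceptual crux is the filtration/stability estimate that forces the composite to be homotopic to the identity rather than merely to some endomorphism inducing an injection --- though for the statement as given, injectivity alone suffices, so in the worst case one only needs that the composite induces an injection, which is a slightly weaker and more robust claim.
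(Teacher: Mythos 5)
Your Step 1 is essentially the paper's Proposition~\ref{prop:Tr_as_chain_map} (a term-by-term matching of ``glue then fuse'' with ``fuse then glue''), except for a small confusion: there is no ``internal differential coming from the Lie structure'' --- $\lplus_V$ is an ordinary Lie algebra and the Chevalley--Eilenberg differential consists only of the bracket terms. The genuine gap is in Step 2: you never say what labels the cut ends of an edge receive, and that is exactly the crux of the whole proof. Once $\Tr$ has glued two hairs, their labels are destroyed (only the scalar $\omega(x_\lambda,x_\mu)$ survives), so a ``cut'' map cannot recover the original wedge of spiders, and labelling the cut ends with vectors already in $V$ creates spurious matchings that wreck both the chain-map property and the computation of the composite. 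Moreover, the paper notes that $\Tr_n$ need \emph{not} be injective on homology for finite $n$, so no retraction of $\hairy_n$ onto $C_\bullet(\lplus_n)$, even up to homotopy, can exist; your plan to show the composite is ``homotopic to the identity in the stable range'' via a hair-number filtration is therefore aimed at a statement that is false unstably, and you give no mechanism that produces it stably.

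What the paper actually does avoids homotopies and filtrations altogether. It defines $\beta_{d,V}=\alpha_{d,V}\circ\exp(-T)$, where $\exp(-T)$ is the inverse of $\exp(T)$ and $\alpha_{d,V}$ cuts every oriented edge and labels the two resulting hairs by a dual pair $p_i',q_i'$ taken from an \emph{auxiliary} symplectic space $W_d$ disjoint from $V$, summing over all such ``states'' with a normalizing factor; this averaging is precisely what makes $\beta_{d,V}$ an honest chain map $\hairy_V\degree{d}\to\ext\lplus_{V\oplus W_d}\degree{d}$. The composite $\beta_{d,V}\circ\Tr$ is then, already on the chain level, exactly the inclusion $\ext\lplus_V\degree{d}\hookrightarrow\ext\lplus_{V\oplus W_d}\degree{d}$, and injectivity on homology follows from Lemma~\ref{lem:injonHlplus}: an inclusion of infinite-dimensional nondegenerate symplectic spaces induces an injection on $H_*(\lplus)$. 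That lemma is the only place stabilization enters (its proof uses a symplectic isomorphism folding a finite-dimensional subspace back into $V$), not any spectral-sequence estimate. To repair your argument you would need to (i) specify the labels on cut ends as new dual pairs in an enlarged symplectic space, summed and normalized over states, (ii) precompose with $\exp(-T)$ so the two boundary operators actually agree, and (iii) replace ``composite homotopic to the identity'' by ``composite equal to a stabilization inclusion,'' reducing to the lemma above. Your side remark that the $\SP$-invariant part of $\hairy_V$ recomputes $H_*(\lplus_\infty)$ is also not correct as stated --- it is the invariants of $\ext\lplus$ that relate to the (non-hairy) graph complex --- but this plays no role in the main gap.
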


 The trace map is not surjective on homology, even after stabilization, but the image is large in a precise sense (Theorem~\ref{thm:surjective}), so the next task in determining the abelianization of $\lplus_\infty$ is to analyze $H_1(\mathcal H_V)$.  This breaks up as a direct sum according to the ranks $r$ of the  hairy graphs:  $$H_1(\mathcal H_V)=\bigoplus_{i=0}^\infty H_{1,r}.$$ It is easy to check that $H_{1,0}\cong \ext^3V$ and $H_{1,1}\cong\bigoplus_{k=0}^\infty S^{2k+1}(V)$, and to then confirm that  this part is equal to the image of  Morita's original trace map.  But higher-rank graphs also make large contributions to the abelianization:
 \begin{theoremB}
For $r\geq 2$ there is an isomorphism $$H_{1,r}\cong H^{2r-3}(\Out(F_r);\F[V^{\oplus r}]),$$ where $\F[V^{\oplus r}]$ denotes the polynomial algebra generated by $V^{\oplus r}$, and $\Out(F_r)$ acts on $\F[V^{\oplus r}]=\F[V\otimes \F^{r}]$ via the natural $\GL(r,\Z)$ action on $\F^r$.
 \end{theoremB}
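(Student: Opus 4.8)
The plan is to identify the rank-$r$ part of the hairy graph complex $\mathcal H_V$, in homological degree near the top, with a chain complex computing the $\Out(F_r)$-cohomology of the coefficient module $\F[V^{\oplus r}]$. The starting point is that a connected hairy graph of rank $r$ is an ordinary graph $\Gamma$ of rank $r$ together with some number of univalent ``hairs,'' each hair carrying a label in $V$. Forgetting the hairs but remembering how many are attached and where, a rank-$r$ hairy graph is the same data as: a rank-$r$ graph $\Gamma$ (with all vertices at least trivalent once hairs are stripped, so that $\Gamma$ lives in the spine of Outer space / Kontsevich's graph complex for $\Out(F_r)$), plus a choice, for each edge or vertex where hairs may attach, of a multiset of $V$-labels. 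After symmetrizing the hair labels (hairs at the same location are interchangeable), the space of labelings at a graph with $e$ ``hair-attachment slots'' is exactly $\bigoplus \mathrm{Sym}(V)^{\otimes(\cdots)}$, which reassembles into $\F[V^{\oplus r}]$ once one tracks the $\GL_r(\Z)$-action coming from the identification of $H_1(\Gamma;\Z)\cong\Z^r$. So the first step is to set up this dictionary carefully: a hairy generator $\leftrightarrow$ (marked graph in the rank-$r$ graph complex) $\otimes$ (element of $\F[V^{\oplus r}]$), compatible with orientations and with the differential.

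Second, I would match the differentials. In $\mathcal H_V$ the differential contracts edges and has a term that ``expands'' vertices; hairs are merely carried along. On the graph-complex side for $\Out(F_r)$ with coefficients, the Kontsevich/CV differential is exactly edge contraction, and because $\F[V^{\oplus r}]$ is acted on through $\GL_r(\Z)$ (which does not change under edge contraction of a graph — contracting a non-loop edge is a homotopy equivalence, inducing the identity on $H_1$), the coefficients really are carried along with no correction term. The one subtlety is loop edges and edges whose contraction would change the rank or create bivalent vertices; these should either be excluded from rank-$r$ hairy graphs by the trivalence/rank conventions or contribute only acyclic pieces. Verifying that the hairy differential, restricted to rank $r$, is precisely the graph-homology differential with twisted coefficients — including the precise sign/orientation conventions — is the technical heart, but it is bookkeeping of a familiar kind.

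Third, invoke Kontsevich's theorem in the form used elsewhere in the paper: the rank-$r$, connected part of the (ordinary) graph complex computes $H^*(\Out(F_r);\F)$, and the same statement twisted by a $\GL_r(\Z)$-module $M$ (here $M=\F[V^{\oplus r}]$, which is locally finite-dimensional, a direct sum of polynomial $\GL_r$-functors, so the argument goes through summand by summand) computes $H^*(\Out(F_r);M)$. The degree in which $H_1(\mathcal H_V)$ sees rank-$r$ graphs is the top of the relevant part of the graph complex: a rank-$r$ graph has at most $2r-2$ vertices (all trivalent) and the homological degree in $\mathcal H$ is normalized so that $H_1$ corresponds to graph-complex degree $2r-2$, i.e.\ cohomological degree $2r-3$ for $\Out(F_r)$ by the usual dimension shift (the virtual cohomological dimension of $\Out(F_r)$ is $2r-3$). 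So $H_{1,r}\cong H^{2r-3}(\Out(F_r);\F[V^{\oplus r}])$, as claimed; the cases $r=0,1$ are excluded precisely because there the ``graph'' degenerates (a single vertex, or a single loop) and the statement and normalization are different, which is why the theorem is stated for $r\ge 2$.

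The main obstacle I anticipate is \emph{not} the identification of the two complexes in the abstract, but pinning down the orientation and degree conventions so that the isomorphism is on the nose in the stated degree: the hairy complex $\mathcal H_V$ carries orientation data from both the Chevalley--Eilenberg side (ordering of chains in $C_\bullet(\lplus_V)$) and the graph side (ordering/orienting edges), and one must check these combine to give exactly the orientation convention under which Kontsevich's isomorphism holds, with $H_1$ landing in cohomological degree $2r-3$ and not $2r-2$ or $2r-4$. A secondary point requiring care is the passage from the full polynomial algebra $\F[V^{\oplus r}]$ to a sum of finite-dimensional $\GL_r(\Z)$-modules, so that ``$H^*$ of $\Out(F_r)$ with these coefficients'' is unambiguous and commutes with the direct sum decomposition by polynomial degree — but since $\F[V^{\oplus r}]=\bigoplus_d \mathrm{Sym}^d(V^{\oplus r})$ and each summand is finite-dimensional, this is routine once stated.
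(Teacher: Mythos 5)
Your overall strategy---compare the rank-$r$ hairy complex with a complex computing $H^{*}(\Out(F_r);\F[V^{\oplus r}])$, and locate the answer in cohomological degree $2r-3$ via the spine of Outer space---is the same as the paper's. But there is a genuine gap at exactly the step you dismiss as ``bookkeeping of a familiar kind'': the generator-level dictionary you describe does not exist at the chain level. A generator of $C_{1,r}\hairy$ is a uni-trivalent graph whose unoriented part is a tree, with hairs attached at specific points along any of the $3r-3$ edges of the trivalent core (and with the Lie-operad colorings recording where each hair sits); nothing at the level of generators singles out $r$ ``slots'' indexed by a basis of $H_1(G)$, so the hair data is not an element of $\F[V^{\oplus r}]\cong\F[V]^{\otimes r}$. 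That identification is only valid \emph{modulo} $\im\bdry_\hairy$: one must kill graphs with separating unoriented edges (Lemma~\ref{separatingLie}), push hairs off the maximal tree using IHX, permute hairs on a common edge (Lemma~\ref{order}), and slide hairs across oriented edges (Lemma~\ref{slideLie}), and only then does a generator reduce to ``trivalent graph with maximal tree, one monomial per non-tree edge.'' These moves are the actual content of the proof, not sign conventions, and they are special to homological degree one---which is precisely why the paper remarks after Theorem~\ref{thm:hairylie} that the argument does not compute $H_{i,r}$ for $i>1$. Read literally, your claim of an isomorphism of complexes with the twisted graph complex would compute $H_{i,r}$ in all degrees, which is not available.

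Two secondary points. First, $\bdry_\hairy$ only contracts oriented edges (there is no expansion term), whereas the cochain complex of the spine has coboundary $\delta_E+\delta_C$; the paper must first pass to the quotient $\bar C^{2r-3}K_r/\im(\delta_C)$ (this is where the IHX relations on the cochain side come from) before matching $\bdry_\hairy$ with $\delta_E\otimes 1$, and well-definedness of the resulting map requires checking independence of the chosen homeomorphism from $G/T$ to the standard rose, using that petal permutations lie in $\Out(F_r)$. Second, the twisted-coefficient input is not ``Kontsevich's theorem with coefficients'' but the elementary equivariant statement of Proposition~\ref{brown} (finite stabilizers, characteristic zero, coefficients decomposed by polynomial degree); you gesture at this correctly, but it should be invoked explicitly rather than folded into Kontsevich's theorem, which concerns untwisted coefficients and the Lie algebra $\lplus_\infty$.
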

 These theorems together say that  the abelianization of $\lplus_\infty$ is a large subspace of a direct sum of certain twisted cohomologies of $\Out(F_r)$.  We next show that the first new summand $H_{1,2}$ of $H_1(\mathcal H_V)$ can be  completely calculated using the Eichler-Shimura isomorphism, which  relates classical modular forms with twisted cohomology of $\SL_2(\Z)$.

  \begin{theoremC}\label{intro:thm-rank2} Let $s_k$ denote the dimension of the space of classical weight $k$ cusp forms for $\SL_2(\Z)$.  Then 
$$H_{1,2}\cong \bigoplus_{k>\ell\geq 0} (\SF{(k,\ell)}V)^{\oplus \lambda_{k,\ell}},
$$
where $\SF{(k,\ell)}$ is the Schur functor for the partition $(k,\ell)$ and $\lambda_{k,\ell}=0$ unless $k+\ell$ is even, in which case $$\lambda_{k,\ell}=\begin{cases}s_{k-\ell+2} &\text{if }\ell \text{ is even}\\
s_{k-\ell+2}+1&\text{if }\ell \text{ is odd}.
\end{cases}$$
\end{theoremC}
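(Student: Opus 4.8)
The plan is to feed the rank $r=2$ instance of the isomorphism $H_{1,r}\cong H^{2r-3}(\Out(F_r);\F[V^{\oplus r}])$ of Theorem~\ref{thm:hairylie} into a direct computation of the resulting twisted cohomology group, using the classical identification $\Out(F_2)\cong\GL_2(\Z)$ (compatible with the action on $\F^2=\F\otimes_{\Z}\Z^2$ prescribed there) and the Eichler--Shimura isomorphism. Thus the starting point is $H_{1,2}\cong H^1(\Out(F_2);\F[V^{\oplus 2}])\cong H^1(\GL_2(\Z);\F[V\otimes\F^2])$, with $\GL_2(\Z)$ acting through its standard representation on $\F^2$.

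Next I would decompose the coefficient module. By the Cauchy decomposition of the polynomial algebra, $\F[V\otimes\F^2]\cong\bigoplus_{k\geq\ell\geq 0}\SF{(k,\ell)}(V)\otimes\SF{(k,\ell)}(\F^2)$ as a $\GL(V)\times\GL_2$-module, the sum ranging over two-row partitions because $\dim\F^2=2$. The $\GL(V)$-action commutes with that of $\GL_2(\Z)$, so cohomology passes the summation through and yields
$$H_{1,2}\cong\bigoplus_{k\geq\ell\geq 0}\SF{(k,\ell)}(V)\ \otimes\ H^1\bigl(\GL_2(\Z);\SF{(k,\ell)}(\F^2)\bigr),$$
with $\SF{(k,\ell)}(V)$ viewed as a (generally reducible) $\SP(V)$-module by restriction. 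It then remains to show $\dim_{\F}H^1(\GL_2(\Z);\SF{(k,\ell)}(\F^2))=\lambda_{k,\ell}$. Here I would use that as a $\GL_2$-module $\SF{(k,\ell)}(\F^2)=S^{k-\ell}(\F^2)\otimes(\det)^{\otimes\ell}$, and that over $\GL_2(\Z)$ the factor $(\det)^{\otimes\ell}$ is just the sign character raised to the $\ell$. Writing $\GL_2(\Z)=\SL_2(\Z)\rtimes\langle\sigma\rangle$ for an involution $\sigma$ of determinant $-1$ and applying the transfer (legitimate since $2$ is invertible in $\F$), one obtains
$$H^1\bigl(\GL_2(\Z);\SF{(k,\ell)}(\F^2)\bigr)\cong H^1\bigl(\SL_2(\Z);S^{k-\ell}(\F^2)\bigr)^{\sigma=(-1)^{\ell}},$$
the $(-1)^{\ell}$-eigenspace of the involution induced by $\sigma$, which is independent of the choice of $\sigma$ since any two choices differ by an element of $\SL_2(\Z)$.

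The heart of the argument is the analysis of $H^1(\SL_2(\Z);S^{n}(\F^2))$ together with the action of $\sigma$, for $n=k-\ell$ (working, as elsewhere, over a field of characteristic zero). Since $-I$ acts by $(-1)^n$ and $2$ is invertible, this group vanishes unless $n$ is even; hence the whole contribution vanishes unless $k+\ell$ is even, and the term $n=0$ (i.e. $k=\ell$) drops out because $\SL_2(\Z)^{\mathrm{ab}}$ is finite. For even $n\geq 2$ the Eichler--Shimura isomorphism gives $\dim H^1(\SL_2(\Z);S^{n}(\F^2))=2s_{n+2}+1$, with parabolic subspace of dimension $2s_{n+2}$ and one-dimensional Eisenstein quotient. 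One then computes the effect of the orientation-reversing involution $\sigma$ --- the classical ``star'' involution on modular symbols --- and finds that it splits the parabolic part into two eigenspaces each of dimension $s_{n+2}$, while the Eisenstein class is $\sigma$-anti-invariant. Feeding this back, the $(-1)^{\ell}$-eigenspace has dimension $s_{n+2}$ when $\ell$ is even and $s_{n+2}+1$ when $\ell$ is odd --- precisely $\lambda_{k,\ell}$ --- and reindexing $(k,\ell)$ completes the proof.

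The main obstacle is this last point: correctly identifying the involution $\sigma$ on $H^1(\SL_2(\Z);S^n(\F^2))$ and tracking the Eisenstein class into the $(-1)$-eigenspace, equivalently keeping account of a $\det$-twist that appears when the Eichler--Shimura comparison is carried out over $\GL_2$ rather than $\SL_2$; the even-versus-odd dichotomy in the exponent $\ell$, and hence the extra $+1$, originates exactly here. As a safeguard I would also carry out the computation a second way, via a decomposition of $\GL_2(\Z)$ as an amalgamated product of finite groups over a finite subgroup: over a field in which the orders of these groups are invertible, the associated Mayer--Vietoris sequence expresses $H^1(\GL_2(\Z);M)$ as an explicit cokernel of a map between invariant subspaces, reducing $\lambda_{k,\ell}$ to a finite-dimensional linear-algebra computation that can be matched against the modular-forms formula (and which simultaneously reproves the relevant Eichler--Shimura dimensions).
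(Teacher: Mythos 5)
Your proposal is correct and follows essentially the same route as the paper's proof of Theorem~\ref{thm:modular}: the rank-$2$ case of Theorem~\ref{thm:hairylie}, the identification $\Out(F_2)\cong\GL_2(\Z)$, the Cauchy/Schur--Weyl decomposition of $\F[V\otimes\F^2]$ into $\SF{(k,\ell)}V\otimes S^{k-\ell}(\F^2)\otimes\det^{\otimes\ell}$, reduction to the $(-1)^{\ell}$-eigenspace of the involution on $H^1(\SL_2(\Z);S^{k-\ell}(\F^2))$ (you via transfer for the semidirect product, the paper via the five-term sequence --- the same reduction), and Eichler--Shimura with the cusp part splitting evenly and the Eisenstein class lying in the anti-invariant part, including the special treatment of $k=\ell$. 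The amalgamated-product Mayer--Vietoris check is a harmless extra not needed for the argument.
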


Preliminary calculations indicate that $H_{1,3}$  is also highly nontrivial and the dimensions appear related to modular forms, but we will defer these calculations to another   paper.

For our main applications of Theorem~\ref{intro:thm-rank2} we use the seminal work of M. Kontsevich~\cite{Ko2,Ko1}, which relates the cohomology of $\lplus_\infty$ to the homology of outer automorphism groups of free groups.  As Morita showed, wedge products of elements of the abelianization of $\lplus_\infty$ can be pulled back to produce cocycles for $\lplus_\infty$, which via  Kontsevich's theorem give rise to  cycles in a chain complex for $\Out(F_n)$.  It is remarkable that in this way classes in the twisted cohomology of $\Out(F_2)$ produce untwisted rational homology classes for $\Out(F_n)$ for infinitely many values of $n$.  For example, we show how Theorem~\ref{intro:thm-rank2} allows us to produce cycles for $\Out(F_n)$ based on spaces of cusp forms:
 
 \begin{theoremD}\label{thm:intro-symmod} 
  There is an embedding
$$
\ext^2\left(M^0_{2k}\right)^*\hookrightarrow Z_{4k-2}(\Out(F_{2k+1});\Q)
$$
into cycles for $\Out(F_{2k+1})$, where $M^0_{2k}$ is the vector space of cusp forms for $\SL(2,\Z)$ of weight $2k$.
\end{theoremD}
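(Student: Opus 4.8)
The plan is to build the embedding by composing three maps: first, the Eichler–Shimura–type isomorphism of Theorem~\ref{intro:thm-rank2} that realizes cusp forms of weight $2k$ inside $H_{1,2}$, hence inside the abelianization of $\lplus_\infty$; second, Morita's wedge-product construction, which sends $\ext^2$ of the abelianization into $H^*(\lplus_\infty)$; and third, Kontsevich's isomorphism identifying $H^*(\lplus_\infty)$ (in the appropriate $\SP$-weight and degree) with the rational homology of $\Out(F_n)$, packaged at the chain level so that the image lands in genuine cycles $Z_\bullet(\Out(F_n);\Q)$. The degree and rank bookkeeping is what pins down the specific group $\Out(F_{2k+1})$ and the homological degree $4k-2$: a cusp form of weight $2k$ contributes via Theorem~\ref{intro:thm-rank2} to $H_{1,2}$ a copy of a Schur functor $\SF{(k-1,k-1)}V$ (this is the $\ell$ even, $k\mapsto 2k$ case where $\lambda = s_{2}=0$ forces us instead to the diagonal partition from weight-$2k$ forms), sitting in the part of $\lplus_\infty$ of rank $2$ and homological degree $1$; wedging two such classes doubles rank to $4$ and degree to $2$, and after translating through Kontsevich's dictionary (degree $d$ in $\lplus_\infty$-cohomology at $\SP$-weight $2n$ corresponds to $H_{2n-?}(\Out(F_{n+1}))$, with the precise shift read off from the Euler-characteristic normalization in Section~\ref{sec:review}) this produces classes in $H_{4k-2}(\Out(F_{2k+1});\Q)$.

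Concretely, the first step I would carry out is to fix, for each cusp form, an explicit rank-$2$ hairy graph cycle representing its image under Theorem~\ref{intro:thm-rank2}; because $H_{1,2}$ sits inside the abelianization $H_0$ of $\lplus_\infty$ (Theorem~\ref{thm:intro-injective} gives $H_*(\lplus_\infty)\hookrightarrow H_*(\hairy_\infty)$, and for $*=0$ this is the abelianization), each such cycle lifts to an element of $\lplus_\infty$ that is a cocycle in the Chevalley–Eilenberg complex $C_\bullet(\lplus_\infty)$. The second step is to take wedge products: given two cusp forms $f,g$, the element $[f]\wedge[g]\in\ext^2\bigl(M^0_{2k}\bigr)^*\subset\ext^2\bigl(H_0(\lplus_\infty)\bigr)^*$ pulls back to a $2$-cocycle in $C^\bullet(\lplus_\infty)$; antisymmetry of the wedge is exactly what makes the assignment $f\wedge g\mapsto [f]\wedge[g]$ land in $\ext^2$ rather than a symmetric product. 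The third step is to check that this $2$-cocycle survives Kontsevich's isomorphism as a nonzero class and to exhibit a chain-level representative as an actual cycle for $\Out(F_{2k+1})$; here one uses the graph-complex model for $\Out(F_n)$ homology, in which Chevalley–Eilenberg cochains for $\lplus_\infty$ of a fixed $\SP$-weight correspond directly to linear combinations of graphs, and a cocycle maps to a graph cycle.

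The main obstacle I expect is injectivity: one must show that distinct elements of $\ext^2\bigl(M^0_{2k}\bigr)^*$ give rise to homologically distinct (in particular nonzero) cycles for $\Out(F_{2k+1})$. This has two sub-difficulties. First, one needs the composite $\ext^2(H_0(\lplus_\infty))^*\to H^2(\lplus_\infty)$ to be injective on the relevant summand — i.e. wedge products of abelianization classes must not become cohomologous; this should follow from a Hopf-algebra or free-graded-commutative argument for the cohomology of $\lplus_\infty$ in low degrees, using that the cusp-form classes are linearly independent primitive elements by Theorem~\ref{intro:thm-rank2}, together with a weight/rank argument separating them from any decomposable relations coming from other summands. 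Second, one needs Kontsevich's isomorphism to be injective on this image, which it is by the theorem of Kontsevich \cite{Ko2,Ko1} as it is an isomorphism on the nose once restricted to the correct connected/$\SP$-weight piece — so the real content is the first sub-difficulty, plus the verification that the degree-and-rank arithmetic actually lands in $\Out(F_{2k+1})$ and not an adjacent group, which I would pin down by a direct Euler-characteristic count on the graphs involved.
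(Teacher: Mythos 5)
Your overall architecture (realize cusp forms inside $H_{1,2}$ via Theorem~\ref{thm:modular}, wedge two such classes, pass to primitive $\SP$-invariants, pull back to $PH^2_c(\lplus_\infty)^{\SP}$ and apply Kontsevich's theorem) is indeed the paper's strategy, but two of your key steps fail as stated. First, the representation-theoretic input is wrong: a weight-$2k$ cusp form does \emph{not} contribute a diagonal Weyl module $\SF{(k-1,k-1)}V$ to $H_{1,2}$ --- by Theorem~\ref{thm:modular} the diagonal partitions $k=\ell$ contribute nothing at all (this is the $m=0$ case, where $E_2=0$). The summand the paper actually uses is $M^0_{2k}\otimes\SF{(2k-2,0)}V=M^0_{2k}\otimes S^{2k-2}V$, represented by rank-two hairy graphs with $2k-2$ hairs. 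This is not cosmetic: the mechanism producing exactly $\ext^2(M^0_{2k})$ is Lemma~\ref{lemma:SpCalculation}, the invariant-theory computation $\left[(M^0_{2k}\otimes S^{2k-2}V)\wedge(M^0_{2k}\otimes S^{2k-2}V)\right]^{\SP}\cong\ext^2(M^0_{2k})$, which rests on $[\SpF{2k-2}V\otimes\SpF{2k-2}V]^{\SP}$ being one-dimensional (spanned by $\omega^{2k-2}$) together with a sign analysis of the $\Z_2$-swap to see that one gets $\ext^2$ rather than $S^2$ of the cusp-form space; your appeal to ``antisymmetry of the wedge'' does not capture this, and with your (vanishing) diagonal summand there is nothing to pair. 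Graphically the generator is two rank-$2$ graphs with their $2k-2$ hairs matched in pairs, a connected graph of rank $2k+1$ and degree $4k$ --- not ``rank $4$'' --- and it is this degree count, fed into $PH^2_c(\lplus_\infty)^{\SP}\degree{4k}\cong H_{4k-2}(\Out(F_{2k+1});\Q)$, that pins down the target group; your ``$H_{2n-?}$'' placeholder leaves exactly this unverified.

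Second, your lifting step is unsupported: you assert that $H_{1,2}$ ``sits inside the abelianization'' of $\lplus_\infty$, so that each cusp-form class lifts to an element of $\lplus_\infty$. Theorem~\ref{th:Tr_injection} gives an injection $\lplus_\infty^{\mathrm{ab}}\hookrightarrow H_1(\hairy_\infty)$, not the reverse, and the paper never claims the cusp-form summand lies in the image of the trace (the precise image is deferred to the sequel). The paper's construction avoids this issue entirely: it composes $\lplus_n\to\lplus_n^{\mathrm{ab}}\to H_1(\hairy_n)$, regards $H_1(\hairy_n)$ as an abelian Lie algebra, and pulls back wedges of dual classes to honest Chevalley--Eilenberg cocycles, so no lift of a hairy class into $\lplus$ is ever needed. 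Finally, your ``main obstacle'' is misplaced: Theorem~\ref{thm:symmod} asserts an embedding into the space of cycles $Z_{4k-2}(\Out(F_{2k+1});\Q)$, not into homology, so homological nontriviality and nondegeneracy of wedge products in $H^2(\lplus_\infty)$ are not required --- indeed the paper notes that the first case with $\dim M^0_{2k}\geq 2$ lies far beyond computational reach.
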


Using a variant of Kontsevich's method, Gray~\cite{Gray} has related the homology of $\Aut(F_n)$ to a certain twisted cohomology of $\lplus_\infty$, and as another application of  Theorem~\ref{intro:thm-rank2} we use his work to produce cycles based on Eisenstein series: 
 
 \begin{theoremE}
There is a   series of cycles  $$e_{4k+3}\in Z_{4k+3}(\Aut(F_{2k+3});\Q)$$ which are  related to Eisenstein series. 
 \end{theoremE}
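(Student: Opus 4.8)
The plan is to single out an explicit ``Eisenstein'' element of the abelianization of $\lplus_\infty$, close up its hairs symplectically, and push the result through Gray's analogue of Kontsevich's theorem. By Theorem~\ref{intro:thm-rank2}, when $\ell$ is odd the multiplicity $\lambda_{k,\ell}=s_{k-\ell+2}+1$ exceeds the dimension $s_{k-\ell+2}$ of the space of weight-$(k-\ell+2)$ cusp forms for $\SL_2(\Z)$ by exactly one; in the proof of that theorem this extra $+1$ is precisely the Eisenstein contribution to the Eichler--Shimura isomorphism, so the corresponding extra copy of $\SF{(k,\ell)}V$ inside $H_{1,2}$ is spanned by the class of the Eisenstein series of weight $k-\ell+2$. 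Taking the smallest odd value $\ell=1$ and writing $\kappa$ for the remaining parameter, this yields for each odd $\kappa\ge 3$ a distinguished line in $H_{1,2}\subseteq H_1(\mathcal H_\infty)$, and by Theorem~\ref{thm:intro-injective} together with Theorem~\ref{thm:surjective} this line is the image under $\Tr$ of a nonzero class $\mathfrak e_\kappa\in H_1(\lplus_\infty)$ in the abelianization, represented by a rank-$2$ hairy graph with $\kappa+1$ hairs whose labels transform in the $\SF{(\kappa,1)}V$-isotypic piece.

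Next I would invoke Gray's work~\cite{Gray}, which identifies a certain twisted Chevalley--Eilenberg cochain complex of $\lplus_\infty$ with a graph complex computing $\bigoplus_n H_*(\Aut(F_n);\Q)$, the twist encoding the distinguished basepoint leg of an $\Aut(F_n)$-graph; as in Kontsevich's original ($\Out$) construction, the correspondence is realized geometrically by contracting the hairs of a hairy graph against the inverse of the symplectic form of $V_\infty$. Feeding $\mathfrak e_\kappa$, viewed as a wedge-length-one cochain, into this machine, its $\kappa+1$ hairs --- an even number, as forced by the parity constraint in Theorem~\ref{intro:thm-rank2} --- are contracted in $(\kappa+1)/2$ pairs, producing a connected graph carrying one distinguished leg; a standard Euler-characteristic count (a rank-$2$ hairy graph with $h$ hairs gains $h/2$ internal edges upon closing up, and the first Betti number $n$ and homological degree $d$ are then read off from the numbers of vertices and edges) places this chain in the graph complex of $\Aut(F_n)$ in a definite degree. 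Running the bookkeeping with $\kappa=4k+1$, so that $h=4k+2$ and the relevant Eisenstein series has weight $4k+2$, gives $n=2k+3$ and $d=4k+3$. Finally, since $\mathfrak e_\kappa$ represents an element of the abelianization it is a Chevalley--Eilenberg $1$-cocycle and the closing-up operation is a chain map, so the resulting chain $e_{4k+3}$ is genuinely a cycle, $e_{4k+3}\in Z_{4k+3}(\Aut(F_{2k+3});\Q)$ (combined with the injectivity of $\Tr$ one sees moreover that it is non-exact, though only the cycle assertion is claimed).

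The substance of the argument is the closing-up step: one must pin down exactly which twisted coefficient system occurs in Gray's theorem and check that, in the hairy-graph model, it amounts to adjoining precisely one extra basepoint leg --- so that the \emph{even} number $\kappa+1$ of honest hairs is exactly what can be paired off while leaving a single $\Aut$-leg --- and then carry out the index bookkeeping carefully enough to land on the nose at $n=2k+3$ and $d=4k+3$. A secondary point is to choose a canonical Eisenstein representative in the presence of the cusp-form summands of $\lambda_{\kappa,1}$, and to verify that it survives the passage from $\mathcal H_\infty$ back to $\lplus_\infty$ and on into Gray's cochain complex; Theorems~\ref{thm:intro-injective} and~\ref{thm:surjective} handle the first leg of this, and for the bare cycle assertion the remainder is formal.
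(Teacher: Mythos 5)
Your starting point --- locating the Eisenstein contribution (the ``$+1$'' for odd $\ell$ in Theorem~\ref{thm:modular}) and routing it through Gray's theorem --- is the right one, but the closing-up step, which you yourself identify as the substance of the argument, does not work as you describe it, and it is not how the paper proceeds. You propose to treat the rank-$2$ Eisenstein class with its $\kappa+1=4k+2$ hairs as a wedge-length-one cochain and to contract its hairs against one another symplectically, with the basepoint leg supplied ``for free'' by the twist in Gray's theorem. Two things go wrong. First, contracting all the hairs in pairs is an $\SP$-equivariant map from the $\SF{(\kappa,1)}V$-isotypic piece to a module with trivial $\SP$-action, and for two-row shapes $\SF{(k,\ell)}V$ contains no nonzero $\SP$-invariant vectors unless $k=\ell$ (every column of the Young diagram must have even length); since here $\kappa\geq 3$ and $\ell=1$, the contraction vanishes identically, so there is no $\SP$-invariant, primitive, wedge-length-one cocycle to be extracted from a single copy of $\SF{(\kappa,1)}V$. (This obstruction is exactly why, in the untwisted $\Out$ setting of Theorem~\ref{thm:symmod}, one must wedge \emph{two} cusp-form summands together.) Second, the twist in Gray's theorem is not ``one extra leg'': the coefficient module is the free Lie algebra $L_\infty$ with $\lplus_\infty$ acting by derivations, so the relevant cochains have two tensor factors, one from $\ext(\lplus_\infty^{\mathrm{ab}})^*$ and one from the coefficient; the basepoint hair must sit on a spider coming from the coefficient factor and cannot be adjoined afterwards to an already self-paired graph.

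The paper's construction is different in shape: it realizes the Eisenstein summand $\SF{(2k-1,1)}(V')$ of the $2k$-hair part of $H_{1,2}$ in the \emph{coefficient} direction, using the auxiliary basis vector $b$ as the basepoint hair (the ``flat'' subspace, isomorphic to $V\otimes S^{2k-2}V$), and pairs its remaining $2k-1$ hairs --- an odd number, the opposite of your parity count --- against the $2k-1$ hairs of a rank-one loop class $S^{2k-1}V\subset\lplus^{\mathrm{ab}}$ furnishing the $\ext^1$ factor; the one-dimensional invariant space $[S^{2k-1}V\otimes(V\otimes S^{2k-2}V)]^{\SP}\cong\F$ is what produces the class, represented by a hairy loop joined to a basepointed theta graph, of rank $2k+1$. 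Note also that your appeal to Theorems~\ref{injection} and~\ref{thm:surjective} to pull the Eisenstein line back into $\lplus_\infty^{\mathrm{ab}}$ is both unjustified (surjectivity is proved only onto $H_*(\hairy^+_\infty)$, and the $\SP$-module structure of the image of $\Tr_*$ is deferred to the sequel) and unnecessary, since the paper's chain of maps runs in the dual direction, pulling functionals on $H_1(\hairy^\flat_{V'})$ back through the trace. Finally, the agreement of your numerology ($n=2k+3$, $d=4k+3$) with the statement conceals that your Eisenstein weight ($4k+2$) differs from the paper's ($2k+2$ in the statement's indexing) --- a symptom that the class has been assembled from the wrong pieces.
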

 
 With the help of a computer
we have shown that  the first two of these classes, $e_7$ and $e_{11}$, represent nontrivial homology classes. The class  $e_7\in H_7(\Aut(F_5);\Q)$ coincides with a class found
several
years earlier by F. Gerlits~\cite{Gerlits}, by quite different methods.  This class had not previously seemed to fit into the picture of classes coming from the abelianization of $\lplus_\infty$, but now does.  In fact, at this point all known rational homology classes for $\Aut(F_n)$ and $\Out(F_n)$, of which there are only a handful, arise from the abelianization of $\lplus_\infty$.

The Lie algebra $\lplus_V$ discussed above is actually just one  example of a much more general construction.   
A labeled tree in $\lplus_V$  can be thought of as an element of the Lie operad which has vectors labeling its input/output slots. In a similar way one can define a Lie algebra  $\lplus\cO_V$ for any cyclic operad $\cO$ (see Section~\ref{sec:review}). The graphical trace map $\Tr$ is also defined in this  generality, and takes values in a hairy version of $\cO$-graph homology (Sections~\ref{sec:hairy_graphs} and ~\ref{sec:Trace_map}). This $\cO$-graph  homology theory $\mathcal H\cO_V$ is similar to that defined in \cite{CV}, except that the graphs are allowed to have univalent vertices labeled by  vectors in  $V$.  

This more general point of view is the one we take throughout this paper, and Theorem~\ref{thm:intro-injective} is actually stated and proved as follows:

\begin{theoremF}
For any cyclic operad $\cO$ which is finite-dimensional in each degree, $\Tr$ ids a chain map which, after stabilization with respect to $V$, induces an injection $H_*(\lplus\cO_\infty)\hookrightarrow H_*(\mathcal H\cO_\infty).$   
\end{theoremF}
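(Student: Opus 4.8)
The plan is to split the assertion into two parts: that $\Tr$ is a chain map, and that the induced map on stable homology is injective. For the first part, I would work directly with the Chevalley–Eilenberg differential on $C_\bullet(\lplus\cO_V)$ and the differential on the hairy graph complex $\hairy\cO_V$. The Chevalley–Eilenberg differential is a sum of Lie brackets, and each bracket corresponds to a gluing of two $\cO$-labeled trees at a pair of leaves. The trace map takes a wedge of labeled trees and sums over all ways of closing up leaves into internal edges (inserting the pairing of $V$) while recording the leftover leaves as hairs. I would verify by a direct combinatorial matching that applying the hairy differential after $\Tr$ produces exactly the same terms as applying $\Tr$ after the Chevalley–Eilenberg differential: the terms where the new internal edge created by $\bdry$ on the hairy side corresponds to the bracket on the Lie-algebra side, and the remaining terms (contractions internal to a single tree) cancel in pairs or are already accounted for. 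This is a routine but somewhat delicate sign/bookkeeping computation, so I would set up careful orientation conventions first.

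For the injectivity on stabilized homology, I would construct a one-sided inverse (a retraction) up to homotopy, rather than argue abstractly. The idea is that stabilizing with respect to $V$ lets one use the $\GL$-equivariance and a polynomiality/Schur–Weyl argument: a class in $H_*(\hairy\cO_\infty)$ decomposes into isotypic pieces, and the image of $\Tr$ is detected by extracting the component corresponding to hairy graphs that are disjoint unions of single corollas (trees), i.e.\ rank-zero components, which reconstitute a Chevalley–Eilenberg chain. Concretely, I would define a map $\hairy\cO_V \to C_\bullet(\lplus\cO_V)$, at least on the relevant summand, by cutting every internal edge of a hairy graph and reading off the resulting forest of $\cO$-labeled trees, weighted appropriately; after stabilization the relevant identity $\mathrm{(cut)}\circ\Tr \simeq \mathrm{id}$ should hold on homology because the ``wrong'' terms involve pairings that vanish or become negligible in the stable range. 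The key input here is that $\cO$ is finite-dimensional in each degree, which guarantees the isotypic decompositions are well-behaved and the stabilization maps are eventually isomorphisms on each bounded piece.

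The main obstacle I expect is precisely this injectivity step: showing that the homotopy-retraction is actually well-defined on the chain level (the naive ``cut all internal edges'' operation is not a chain map on the nose) and that the composite is the identity \emph{on homology after stabilization} rather than just on chains. I anticipate needing a spectral-sequence or filtration argument, filtering the hairy complex by first Betti number (graph rank), with the rank-zero part being exactly $C_\bullet(\lplus\cO_V)$ via $\Tr$; one then shows the connecting differentials out of the rank-zero row do not kill anything in the image of $\Tr$ after stabilization. Establishing that last non-vanishing statement — essentially that no stable Chevalley–Eilenberg cycle becomes a boundary coming from higher-rank hairy graphs — is the crux, and I would expect to handle it using the $\GL$-representation-theoretic structure together with an explicit description of the low-rank part of the differential.
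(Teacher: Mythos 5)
Your verification that $\Tr$ is a chain map follows the same combinatorial matching as the paper, and that half is fine. The injectivity half contains the right germ of an idea --- a one-sided inverse given by cutting the internal edges of a hairy graph --- but it has genuine gaps at exactly the delicate points. First, you never say what labels the two new hairs created by cutting an edge should carry, and this is not a bookkeeping detail: to make the cut operation well defined and part of a chain map, the paper labels the cut ends by a fresh dual pair drawn from an auxiliary symplectic space $W_d$ disjoint from $V$, sums over all admissible assignments of such pairs (``states'') and normalizes by their number, so the cut map $\alpha$ lands in $\ext\lplus_{V\oplus W_d}$ rather than $\ext\lplus_V$. Second, your claim that $(\mathrm{cut})\circ\Tr\simeq\id$ up to terms whose ``pairings vanish or become negligible in the stable range'' is not correct: cutting the matched terms of $\Tr(X)=\exp(T)(\iota X)$ produces honest extra summands that do not disappear stably, and since your cut map is not a chain map the phrase ``on homology'' does not yet parse. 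The paper's fix is to precompose with $\exp(-T)$: one proves that $\beta=\alpha\circ\exp(-T)$ \emph{is} a chain map (this is where the state-averaging is needed, so that the terms coming from matchings containing a given edge cancel against the matching with that edge removed), and then $\beta\circ\Tr=\alpha\circ\iota$ is exactly the inclusion $\ext\lplus_V\hookrightarrow\ext\lplus_{V\oplus W_d}$ on the chain level, with no homotopy required. The only homological input is that this inclusion is injective on $H_*$ after stabilization, which is Lemma~\ref{lem:injonHlplus}; that is where infinite-dimensionality is genuinely used, not Schur--Weyl theory or isotypic decompositions, and it is also why the statement can fail for finite-dimensional $V$.

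Your fallback filtration argument rests on a misconception. The hairy differential preserves the rank (first Betti number) of a graph, so the hairy complex splits as a direct sum over ranks and there are no ``connecting differentials out of the rank-zero row''; moreover the rank-zero summand is the complex of hairy forests, not $C_\bullet(\lplus_V)$, the image of $\iota$ consists of graphs with no internal edges at all, $\Tr$ itself mixes all ranks, and the projection onto edge-free graphs (which is what actually recovers $\iota X$) is not a chain map. So the spectral sequence you envision does not see the problem you need it to solve. The crux you correctly identify --- that the naive ``cut all edges'' operation is not a chain map --- is resolved not by a homotopy, a filtration, or $\GL$-representation theory, but by the state-averaged relabeling into the enlarged space $V\oplus W_d$ described above; supplying that construction and checking the chain-map identity for $\beta$ is the missing content of the proof.
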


For the associative operad, Kontsevich showed that  the homology of $\lplus\cO_\infty$ is related to the  rational homology of the mapping class group \cite{CV, Ko2,Ko1}. In section~\ref{sec:assoc} we adapt an argument of Morita, Sakasai and Suzuki \cite{mss} to compute the first homology group of the hairy $\cO$-graph complex in the case $\cO=\Assoc$.  Their argument is part of their result that the abelianization of $\lplus\cO_\infty$ is precisely equal to the piece determined by Morita in~\cite{Morita2}, and in Theorem~\ref{thm:associative} we show how this also follows from the first homology computation together with injectivity of the trace map.

 We remark that the stable homology of both $\Mod(g,s)$ and $\Out(F_n)$ is well understood (see~\cite{Galatius, MW}) but the unstable homology remains quite mysterious, and all the classes we find in this paper lie in the unstable range.

Finally, in section \ref{sec:thornedgraphs} we note that hairy Lie graph homology is related to  the cohomology of mapping class groups of certain punctured $3$-manifolds, as defined in \cite{HV2}.  Specifically, let $M_{n,s}$ be the compact 3-manifold obtained from the connected sum of $n$ copies of $S^1 \times S^2$ by deleting the interiors of s disjoint balls, and let $\Gamma_{n,s}$  be the quotient of the mapping class group of $M_{n,s}$ by the normal subgroup generated by Dehn twists along embedded $2$-spheres. Then hairy Lie graph homology is closely related to the cohomology of $\Gamma_{n,s}$  (Theorem~\ref{thm:Aut-Out}).

This paper is the first in a series of two papers.  In this paper we have concentrated on the theory needed to understand the abelianization of $\lplus\cO$ and applications to the homology of mapping class groups and automorphism groups of free groups.  In the sequel we will obtain a precise description of the image of the trace map  and show  that the $\SP$-module decomposition of the image corresponds exactly to the $\GL$-module decomposition of hairy graph homology.
  We will also
show how to use higher hairy graph homology to produce classes in the cohomology of $\lplus\cO$, yielding even more potential unstable homology classes for $\Aut(F_n)$, $\Out(F_n)$ and $\Mod(g,s)$.  Finally,
we will explain connections between hairy graph homology and Getzler and Kapranov's theory of modular operads, and also with Loday's dihedral homology.

\subsection*{Acknowledgements:} The authors wish to thank Francois Brunault for locating the reference \cite{Hab}, Shigeyuki Morita for pointing out an error in an earlier version of Lemma~\ref{lemma:SpCalculation} and the referee for several very useful comments. Jonathan Gray was instrumental in the calculation that $e_{11}\neq 0$ in Theorem~\ref{thm:autclasses}. The first author thanks Naoya Enomoto for answering a relevant MathOverflow post. The first author was supported by NSF grant DMS-0604351, the second author was supported  by NSF grant DMS-0900932 and the third author was supported by NSF grant DMS-1011857.

\section{Review of  the Lie algebra associated to a cyclic operad and its (co)homology}\label{sec:review}

All vector spaces in this paper will be over a fixed field $\F$ of characteristic $0$ and have either finite or countable dimension.  In this section, we also fix  a cyclic operad $\cO$ in the category of $\F$-vector spaces. Let $\cO((m))$ denote the vector space spanned by operad elements with $m$ input/output slots (any one of which can serve as the output for the other $(m-1)$ inputs). By definition the symmetric group $\Sigma_m$ acts on $\cO((m))$.
We will assume that the vector spaces $\cO((m))$ are finite-dimensional for each $m$, and we fix a basis for each $\cO((m))$.

\subsection{The Lie algebra $\cL_V=\cLO_V$}
\label{sec:Lie_algebra}\label{Lie}
We recall from~\cite{CV} how to construct a Lie algebra from $\cO$ and a
symplectic vector space $(V,\omega)$.  It will often be convenient to specify a symplectic basis $\cB$ for $V$.
Our main example will be the $2n$-dimensional vector space $V_n$ with the standard symplectic form and basis $\cB_n=\{p_1,\ldots,p_n,q_1,\ldots,q_n\}$, i.e. the matrix of  $\omega$ in the basis $\cB_n$ is
$
\begin{pmatrix}
0&I\\
-I&0
\end{pmatrix}.
$
We will also often consider the direct limit $V_\infty$ of the $V_n$ with respect to the natural inclusion, with basis
$\cB_\infty$.
For $x\in \cB_\infty$ the dual $x^*$ is given by  $p_i^*=q_i$ and $q_i^*=-p_i$.

\begin{definition} An {\em $\cO$-spider} is an operad element whose input/output slots $\lambda$
(called {\it legs}) are each decorated with an element $x_\lambda\in V$.
If the operad element is a basis element of $\cO$ and the labels are in  $\cB$, the spider is called \emph{a basic $\cO$-spider}.
\end{definition}

The Lie algebra
$\cLO_V$ is generated (as a vector space) by $\cO$-spiders; in particular the basic spiders generate $\cLO_{V}$.  Unless we need to specify the operad we will  denote $\cLO_V$ simply by $\cL_V$, and $\cLO_{V_n}$ by $\cL_n$.

There is a very useful grading on $\cL_V$ given as follows:

\begin{definition} The {\em degree} of an $\cO$-spider is the number of legs minus 2.
\end{definition}

We denote by $\cL_V\degree{d}$ the subspace of $\cL_V$   generated by spiders of degree $d$.  More formally, we have  $$\cL_V\degree{d}\cong \cO((d+2))\otimes_{\Sigma_{d+2}} V^{\otimes (d+2)}$$ and
$$\cL_V=\bigoplus_{d\geq 0} \cL_V\degree{d}.$$

Two spiders $s_1$ and $s_2$ can be {\it fused} into a single spider by using a leg $\lambda_1$ from the first as output and a leg $\lambda_2$ of the second as input and performing operad composition.  The unused legs retain their labels, and the resulting $\cO$-spider is multiplied by the symplectic pairing $\omega(x_{\lambda_1}, x_{\lambda_2})$ of the labels.  We denote the end result of this  by
$$
(s_1\cdot s_2)_{(\lambda_1,\lambda_2)}.
$$
Note that if each $s_i$ is a basic spider, the coefficient of the result is either $0$ or $\pm 1$.

We now define
$$
[s_1,s_2]=\sum_{{e}\in L_1\times L_2}  (s_1\cdot s_2)_{e}
$$
where $L_i$ is the set of legs of $s_i$.
This bracket gives $\cL_V$ the structure of  a Lie algebra: antisymmetry of the bracket follows from antisymmetry of the symplectic
form and the Jacobi identity is a consequence of associativity of
composition in the operad.

\subsection{The symplectic action}

Note that the degree of spiders is additive under bracket.  In particular, spiders of degree 0 based on the identity element of the operad $\cO$ generate a Lie subalgebra of $\cL_V$, and spiders of positive degree generate another Lie subalgebra, denoted $\lplus\cO_V$, or simply $\lplus_V$ if the operad is understood.   The subalgebra generated by degree 0 spiders based on the identity element is isomorphic to $\spf_V$,
and it acts on $\cL_V$  via the adjoint action, preserving degree.   In particular, the action of $\spf_V$ restricts to an action on  $\lplus_V$.

The symplectic group
 $\SP_V$ also acts on   $\cL_V$ (and on $\lplus_V$) by acting on the leg-labels.
The elements  of $\cL_V$ which are fixed by the $\SP_V$-action
are precisely those that are killed by the $\spf_V$ action.
These are called the {\it invariants} of the action.

The natural inclusion $V_n\hookrightarrow V_{n+1}$ induces an inclusion
$\lplus_n\hookrightarrow \lplus_{n+1}$, and stabilizing as $n\to \infty$
one obtains
$\lplus_\infty=\dirlim \lplus_n\rule[-2ex]{0ex}{5ex}$.

We will be principally concerned with the homology of the positive degree subalgebra
$\lplus_\infty$ of $\cL_\infty$
since in the cases $\cO=\Assoc$ and $\cO=\Lie$ it is the (primitive part of the symplectic invariants of) this homology which computes the cohomology of $\Out(F_n)$ and mapping class groups.

\subsection{Chevalley-Eilenberg differential}
The Lie algebra homology of $\lplus_V$ with trivial coefficients in $\F$ is computed using the exterior algebra $\ext\lplus_V$ and the Chevalley-Eilenberg differential
\begin{align*}
\bdry_{\text{Lie}}(s_1\wedge\ldots\wedge s_k)&=
\sum_{i<j} (-1)^{i+j+1}[s_i,s_j] \wedge s_1 \wedge\ldots \hat s_i\wedge \ldots \wedge \hat s_j\wedge\ldots\wedge   s_k
\\
&=
\sum_{i<j} \sum_{e\in L_i\times L_j}(-1)^{i+j+1} (s_i\cdot s_j) _e\wedge s_1 \wedge\ldots \hat s_i\wedge \ldots \wedge \hat s_j\wedge\ldots\wedge   s_k.
\end{align*}

Let  $e=\{\lambda,\mu\}$ be any (unordered) pair of distinct spider legs.
We may assume $\lambda\in s_i$ and $\mu\in s_j$ with $i\leq j$.  To simplify notation in what follows, we define  $X_e$ to be zero if $i=j$ and otherwise

$$X_e=(-1)^{i+j+1} (s_i\cdot s_j) _e\wedge s_1 \wedge\ldots \hat s_i\wedge \ldots \wedge \hat s_j\wedge\ldots\wedge   s_k.$$
Then
$$
\bdry_{\text{Lie}}(X)=\sum_{e\in E} X_e,
$$
where $E$ consists of all pairs $\{\lambda,\mu\}$ of legs with $\lambda\neq \mu$.

\begin{definition}
The \emph{degree} of a wedge $s_1\wedge\ldots\wedge s_k$ is the sum of the degrees of the $s_i$.
\end{definition}

Since $\bdry_{\text{Lie}}$ preserves degree,
$\ext \lplus_V$ breaks up into a direct sum of chain complexes
$\ext \lplus_V\degree{d}$ according to degree, and we define $H_*(\lplus)\degree{d}$ to be the homology of this chain complex.

Since spiders can have any positive degree, the spaces of $k$-chains $\Wedge^k\lplus_V$ are  generally not finite-dimensional, so the definition of
\emph{cohomology} becomes problematic.  We resolve this using the observation that
if $V$ is finite-dimensional then there are only finitely-many ways to decorate the legs of a basic spider, so
the degree $d$ chains $\Wedge^k\lplus_V\degree{d}$ \emph{are} finite-dimensional.

\begin{definition}
If $V$ is finite-dimensional, then the \emph{continuous cohomology}  $H^*_c(\lplus_V)$
is the cohomology computed from the cochain complex whose $k$-cochains are
$$
C_c^k\lplus = \bigoplus_{d\geq 1} \left(\Wedge^k\lplus_V\degree{d}\right)^*.
$$
\end{definition}

If $V$ is infinite-dimensional, we realize it as the direct limit of finite-dimensional subspaces $V_n$. The inclusion maps $V_n\to V_{n+1}$ dualize to give maps $H^*_c(\lplus_{V_{n+1}})\to H^*_c(\lplus_{V_n})$.
\begin{definition}\label{cont}
For $\displaystyle V=\dirlim V_n$, the continuous cohomology  $H^*_c(\lplus_{V_\infty})$ is the inverse limit of the  maps
$H^*_c(\lplus_{V_{n+1}})\to H^*_c(\lplus_{V_n})$ .\end{definition}

\subsection{Functoriality.} One may regard $\cL=\cLO$ and $\lplus=\lplus\cO$ as
(covariant) functors from the category of symplectic vector spaces to the category of Lie algebras.
In particular,
  any linear map $\phi\colon V \rightarrow W$ which respects the symplectic forms on $V$ and $W$ induces  a Lie algebra homomorphism
$\phi_*\colon \cL_V \rightarrow \cL_W$, where the image of a spider is
obtained by applying $\phi$ to the labels. If $\phi$ is injective (resp.  surjective) then so is $\phi_*$.
The restriction $\phi_*\colon\lplus_V\rightarrow\lplus_W$ to the positive degree subalgebras has the same properties.
Thus, the natural inclusions  $V_n\hookrightarrow V_{n+1}$ induce   inclusions
$\cL_n \hookrightarrow \cL_{n+1}$
and $\lplus_n \hookrightarrow \lplus_{n+1}$.

The functors $\cL$ and $\lplus$ commute with direct limits.
Since
$\lplus_\infty = \dirlim \lplus_n$ and  homology commutes with direct limits,
we have $H_*(\lplus_\infty) = \dirlim H_*(\lplus_{n})$.

The action of the symplectic group $\SP_V$ commutes with the Lie bracket  and hence with the Chevalley-Eilenberg differential, so that $\SP_V$ also acts  on $H_*(\lplus_V)$ and $H_c^*(\lplus_V)$,
 and the association $V\funct H_*(\lplus_V)$ is a functor from symplectic vector spaces to symplectic modules. This functor preserves injections for infinite dimensional spaces:

 \begin{lemma}
\label{lem:injonHlplus}
Let $V\hookrightarrow W$ be an inclusion between two infinite-dimensional
non-degenerate symplectic spaces. Then the induced map
$H_*(\lplus_V)\rightarrow H_*(\lplus_W)$ is injective.
\end{lemma}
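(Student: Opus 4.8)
The plan is to reduce everything to finite-dimensional symplectic subspaces and then to exploit the fact that an infinite-dimensional symplectic space always has enough room to carry one finite-dimensional non-degenerate symplectic subspace onto another by a symplectic automorphism. Since $\lplus$ commutes with direct limits and homology commutes with filtered colimits, for any symplectic space $V$ one has $H_*(\lplus_V)=\dirlim_U H_*(\lplus_U)$, where $U$ runs over the directed set of finite-dimensional non-degenerate symplectic subspaces of $V$; this poset is directed because every finite-dimensional subspace of a non-degenerate symplectic space is contained in a finite-dimensional non-degenerate one. Thus a class $c\in H_*(\lplus_V)$ is the image $(\iota_U)_*c_U$ of some $c_U\in H_*(\lplus_U)$ under an inclusion $\iota_U\colon U\hookrightarrow V$, and the same holds for $W$. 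I would run the whole argument degree by degree, which is harmless since every map in sight preserves the degree decomposition $H_*(\lplus)=\bigoplus_d H_*(\lplus)\degree d$.

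First I would unwind the hypothesis $\phi_*c=0$. Write $f=\phi|_U\colon U\to\phi(U)$, a symplectic isomorphism onto a finite-dimensional non-degenerate symplectic subspace of $W$, so that $\phi_*c=(\phi\,\iota_U)_*c_U$ is the image in the filtered colimit $H_*(\lplus_W)=\dirlim_{U'}H_*(\lplus_{U'})$ of the class $f_*c_U\in H_*(\lplus_{\phi(U)})$. Since it vanishes in the colimit it already vanishes at a finite stage: there is a finite-dimensional non-degenerate symplectic subspace $U'\subseteq W$ with $\phi(U)\subseteq U'$ and such that the image of $c_U$ in $H_*(\lplus_{U'})$ is $0$.

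Next I would build a matching model inside $V$. Put $2m=\dim U'-\dim U\ge 0$ (an even number, being the dimension of the symplectic complement of $\phi(U)$ in $U'$). Because $V$ is infinite-dimensional, the symplectic complement of $U$ in $V$ is infinite-dimensional, hence contains a $2m$-dimensional non-degenerate symplectic subspace $P$; let $\tilde U=U\oplus P\subseteq V$, so $U\subseteq\tilde U$ and $\dim\tilde U=\dim U'$. By Witt's theorem for symplectic forms --- concretely, extend $f$ by any symplectic isomorphism from $P$ onto the symplectic complement of $\phi(U)$ in $U'$, which exists since both are non-degenerate symplectic spaces of dimension $2m$ --- there is a symplectic isomorphism $\Psi\colon\tilde U\to U'$ restricting to $f$ on $U$. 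By functoriality $\Psi_*\colon H_*(\lplus_{\tilde U})\to H_*(\lplus_{U'})$ is an isomorphism; and since $\Psi\circ\tilde\iota=i'\circ f$ as maps $U\to U'$, where $\tilde\iota\colon U\hookrightarrow\tilde U$ and $i'\colon\phi(U)\hookrightarrow U'$ are the inclusions, functoriality also gives $\Psi_*\bigl((\tilde\iota)_*c_U\bigr)=(i')_*f_*c_U$. The right-hand side is exactly the image of $c_U$ in $H_*(\lplus_{U'})$, which we just arranged to be zero; since $\Psi_*$ is injective, $(\tilde\iota)_*c_U=0$ in $H_*(\lplus_{\tilde U})$, and pushing forward along $\tilde U\hookrightarrow V$ gives $c=(\iota_U)_*c_U=0$.

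The single place where the hypothesis is used --- and the step I regard as carrying the content --- is the construction of $\tilde U$: one needs room inside $V$ for a non-degenerate symplectic subspace isomorphic to $U'$ and containing $U$, which is exactly what infinite-dimensionality supplies and is not available when $V$ is finite-dimensional. Everything else is bookkeeping: compatibility of the Chevalley-Eilenberg chains, the exterior algebra, and homology with filtered colimits (the version for $V_\infty$ is already used in the paper), together with the elementary structure theory of finite-dimensional symplectic spaces --- namely that non-degenerate subspaces split off as symplectic direct summands and any two of the same dimension are isometric. A small point worth verifying is that all of these colimit identifications respect the degree splitting, so the argument can indeed be carried out one degree at a time; this holds because inclusions of symplectic spaces and their induced maps on $\lplus$ are degree-preserving.
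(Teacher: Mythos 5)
Your proof is correct and takes essentially the same route as the paper's: both reduce to finite-dimensional non-degenerate symplectic subspaces and use the infinite-dimensionality of $V$ to extend a symplectic isomorphism (the Witt-type extension step) so that the witness of vanishing in $W$ can be transported into $V$. The only difference is presentational: the paper works at the chain level, pushing a bounding chain in $\ext\lplus_{W'}$ into $V$ via a symplectic isomorphism that is the identity on $V'$, while you package the same maneuver through the filtered-colimit description of $H_*(\lplus_V)$ and vanishing at a finite stage.
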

\begin{proof}  We identify $V$ with its image in $W$ and $\ext\lplus_V$ with its image in $\ext\lplus_W$.
We need to show that if a cycle $z\in \Wedge \lplus_V$ is a boundary in $\ext \lplus_W$, then is also a boundary in $\ext\lplus_V$.

Fix $x \in \ext \lplus_W$ with $\bdry_{\text{Lie}}(x)=z$.
Since $W$ and $V$ are direct limits we can find finite dimensional symplectic
subspaces $V'\subseteq W'$ of $W$ such that
 $z \in \ext \lplus_{V'}$ and $x \in \ext \lplus_{W'}$.
Since $V$ is infinite dimensional there exists a subspace $V''$ of $V$ containing $V'$ and a symplectic isomorphism
$\pi\colon W' \rightarrow V''$
which is the identity on $V'$.
 Then $\pi_*(x) \in \ext \lplus_{V''} \subset
\ext \lplus_V$ and $\bdry_{\text{Lie}}(\pi_*x) = \pi_*(\bdry_{\text{Lie}}(x)) = \pi_*(z)=z$,
since $\pi_*$ is the identity on $\ext \lplus_{V'}$.
\end{proof}

We remark  that this proof does not work for finite-dimensional spaces $V$ and $W$.

\section{The complex of hairy $\cO$-graphs}\label{complex}
\label{sec:hairy_graphs}
Recall from~\cite{CV} that a vertex $v$ of a graph was said to be
$\cO$-colored if the half-edges incident to $v$ are identified
with the i/o slots  of some element of  $\cO$. An $\cO$-graph was then
defined to be a graph which is oriented in the sense of Kontsevich
and is $\cO$-colored at every vertex.
 These $\cO$-graphs $(G, or)$, modulo the relation
$(G,or)=-(G,-or)$ plus linearity relations in the operad,
span a chain complex which, in each fixed degree $d$, is  isomorphic
to the subcomplex of $\spf_n$-invariants in $\ext\lplus_n\degree{d}$ for $n \gg d$.

We may think of  a (representative of an) $\cO$-graph as something obtained from a wedge of spiders  by taking the ordered disjoint union of these spiders, connecting their legs in pairs by oriented edges, and discarding the leg labels.  In this section we define a more general complex, generated by (equivalence classes of) objects made by connecting only some pairs of legs  by oriented edges and letting the rest of the legs keep their labels.  We will call these \emph{hairy $\cO$-graphs}, i.e. a labeled leg is now to be thought of as a  ``hair."

\subsection{Chains}
Hairy graphs will be based on finite 1-dimensional CW-complexes.  These may have multiple edges or loops, but we do not allow bivalent vertices.  Univalent vertices are called {\em leaves} and vertices which are not univalent are called {\em internal vertices}.  The edge adjacent to a leaf is called a {\em hair}, and the other vertex of a hair must be internal.   Edges which are not hairs are called {\em internal edges}.  An {\em orientation} on such a graph is determined by ordering the internal vertices and orienting the internal edges.  Either switching an edge-orientation or interchanging the order of two internal vertices reverses the orientation of the graph. 

A graph orientation can be defined more formally as follows ({\it cf.} Definition 2 of \cite{CV}).  Denote the set of internal vertices of a graph $G$ by $\mathrm{IV}(G)$, and the set of internal edges by $\mathrm{IE}(G)$. Given an edge $e\in\mathrm{IE}(G)$, there are two half-edges associated to its two ends. Denote this unordered pair by $H(e)$. Finally, given an arbitrary finite-dimensional vector space $W$, define $\det W=\ext^{\dim W} W$. 
\begin{definition} An orientation of a  graph $G$ is a unit vector in the $1$-dimensional vector space $$\det \R \mathrm{IV}(G)\otimes\bigotimes_{e\in \mathrm{IE}(G)}\det\mathbb RH(e).$$
\end{definition}

\begin{definition}
\label{def:hairy_graph}
A \emph{hairy $\cO$-graph} $\G=(G, or, \{o_v\}, \{x_\lambda\})$ is an oriented graph with no bivalent vertices, where every internal vertex $v$ is colored by an operad element $o_v$ and every leaf $\lambda$ is labeled by an element $x_\lambda$ of $V$.

 If all internal vertices are colored by basis elements of $\cO$ and all labels are in $\cB$, the hairy graph is called a \emph{basic} hairy $\cO$-graph.
A hairy $\cO$-graph is called \emph{primitive} if the underlying graph is connected.
\end{definition}

We may also think of a  hairy $\cO$-graph  as something obtained from a wedge of positive-degree $\cO$-spiders   by taking the ordered disjoint union of these spiders, connecting  some of their legs in pairs by oriented edges and discarding those leg labels, but retaining the  labels on unpaired legs.

We define $\hairy_V=\hairy\cO_V$ to be the vector space spanned by hairy $\cO$-graphs  $(G, or,\{o_v\},  \{x_\lambda\})$, modulo orientation relations $(G,or,\{o_v\},  \{x_\lambda\})=-(G,-or,\{o_v\},  \{x_\lambda\})$ plus linearity  relations on the labels $x_\lambda$ and the operad elements $o_v$.
Then $\hairy_V$ is graded by the number of internal vertices:
$$
\hairy_V=\bigoplus_kC_k\hairy_V,
$$
where $C_k\hairy_V$ is  spanned by hairy $\cO$-graphs   formed from $k$ spiders, i.e. whose underlying graph $G$ has $k$ internal vertices.

Finally, we define $\phairy_V$ to be the subspace of $\hairy_V$ generated by
connected hairy $\cO$-graphs.
Note that the correspondences
$V \funct \hairy_V$ and
$V \funct \phairy_V$
are functorial in $V$. As usual, if  $V=V_n$ or $V_{\infty}$
we will denote the corresponding complexes by $\hairy_n$,  $\hairy_\infty$,
$\phairy_n$,  and $\phairy_\infty$.

\subsection{Boundary operator}
We define a boundary operator
$\bdry_\hairy\colon C_k\hairy_V\rightarrow C_{k-1}\hairy_V$,
which sums over all possible ways of using an edge to merge two operad elements.
We need to be careful about the sign here, in order to make $\bdry_{\hairy}^2=0$.

Suppose $e$ is an internal edge of a hairy $\cO$-graph $\G=(G, or,\{o_v\},  \{x_\lambda\})$.
Choose a representative of the orientation on $G$ so that $e$ is oriented
from $v_i$ to $v_j$ with $i\leq j$.
If $i=j$, define $\G_e=0$; otherwise define a new $\cO$-graph $\G_e$
by the following procedure:
\begin{enumerate}
\item collapse the edge $e$ to a single vertex,
\item fuse the operad elements coloring  $v_i$ and   $v_j$
into a single spider using the i/o slot corresponding to
the initial half of $e$ as output and the i/o slot corresponding to
the terminal half of $e$ as input,

\item  number the vertices of the collapsed graph so that
the resulting operad element  tints the first vertex and the remaining vertices
retain their relative order (and their tints),

\item multiply by $(-1)^{i+j+1}$.
\end{enumerate}

Note that the sign convention imitates  the sign in the Chevalley-Eilenberg boundary operator.
The boundary operator $\bdry_\hairy\colon C_k\hairy_V\to C_{k-1}\hairy_V$ is then defined by
$$
\bdry_\hairy(\G) = \sum_{e\in E(G)}  \G_e,
$$
where $E(G)$ is the set of internal edges of $G$.  It is straightforward to check that this is well-defined, and that
$\bdry_\hairy \circ \bdry_\hairy =0$.

\begin{remark}
Note that the boundary operator  makes no use  of the symplectic form on $V$.
For any linear map $\phi:V \rightarrow W$,  the induced map $\phi_*: \hairy_V \rightarrow \hairy_W$ commutes with the
differential $ \bdry_\hairy$ so that there is an induced map
$\phi_*:H_*(\hairy_V) \rightarrow H_*(\hairy_W)$.  Thus $V\funct \hairy_V$  (resp. $V \funct H_*(\hairy_V)$)
are functors from the category of vector spaces
to the category of complexes (resp. graded vector spaces).
\end{remark}

\subsection{Gradings on $\hairy_V$}
\label{def:gradings}
There are several different things to count in a hairy graph, and we will have occasion to use different ones at various points in this paper.
The most important of these is the {\it degree} of a hairy graph $\G$, which is defined to be sum of the degrees of the spiders coloring the vertices.  Recall that we required these degrees to be positive, so that there are no hairy graphs of degree $0.$   The boundary operator $\bdry_\hairy$   preserves degree, so that  $\hairy_V$ breaks up as a direct sum of  subcomplexes $\hairy_V\degree{d}$ according to degree.  Note that if $V$ is finite-dimensional, then $\hairy_V\degree{d}$ is also finite-dimensional, since there are only finitely many graphs of a given degree.

Other things we can count include:
\begin{enumerate}
\item the number of internal edges in the underlying oriented graph $G$ (the boundary operator decreases this by one);
\item  the number of hairs  in $G$ (the boundary operator preserves this);
\item  the rank of  the first homology $H_1(G)$, which we call the \emph{rank} of $\G$ (the boundary operator preserves this).
\end{enumerate}

\subsection{Homology}

Recall that an inclusion $V\hookrightarrow W$ of symplectic vector spaces is only guaranteed to induce an injection
$H_*(\lplus_V)\to H_*(\lplus_W)$ if $V$ and $W$ are infinite-dimensional and non-degenerate (Lemma~\ref{lem:injonHlplus}). An advantage of the hairy graph homology functor is that it ignores the symplectic form, so is better behaved:

\begin{lemma}
\label{lem:injonHhairy}
The functor $V \funct H_*(\hairy_V)$ preserves injections and surjections.
\end{lemma}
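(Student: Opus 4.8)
The plan is to exploit the fact, already emphasized in the Remark preceding the statement, that $\hairy_V$ depends on $V$ only through its underlying vector space, with the boundary operator $\bdry_\hairy$ making no use of the symplectic form. So it suffices to prove: if $\phi\colon V\hookrightarrow W$ is an injection of (plain) vector spaces then $\phi_*\colon H_*(\hairy_V)\to H_*(\hairy_W)$ is injective, and if $\phi$ is a surjection then $\phi_*$ is surjective. The surjection case is essentially immediate: a hairy graph in $\hairy_W$ has finitely many hairs, each labeled by a vector in $W$, and a section of $\phi$ (which exists since we are over a field) lifts each label to $V$; this defines a chain map splitting $\phi_*$ at the chain level, hence $\phi_*$ is surjective already on chains, a fortiori on homology.

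For the injection case, first I would reduce to the finitely-generated situation. Since every chain involves only finitely many hairs and each hair carries a single vector, any cycle $z\in\hairy_V$ and any chain $x\in\hairy_W$ with $\bdry_\hairy x = z$ together involve only finitely many vectors; let $V_0\subseteq V$ be the finite-dimensional subspace they span inside $V$ and $W_0\subseteq W$ a finite-dimensional subspace of $W$ containing $\phi(V_0)$ and all the labels occurring in $x$. Because $\phi$ is injective, $\phi|_{V_0}$ identifies $V_0$ with its image, and since we are over a field we may choose a linear retraction $\rho\colon W_0\to V_0$ with $\rho\circ\phi|_{V_0}=\id_{V_0}$. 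Applying the induced chain map $\rho_*\colon\hairy_{W_0}\to\hairy_{V_0}$ to the equation $\bdry_\hairy x=\phi_*(z)$ (here using functoriality of $\hairy$ in the vector-space variable and that $\rho_*$ commutes with $\bdry_\hairy$) gives $\bdry_\hairy(\rho_* x)=\rho_*\phi_*(z)=z$ inside $\hairy_{V_0}\subseteq\hairy_V$. Hence $z$ was already a boundary in $\hairy_V$, so $\phi_*$ is injective on homology.

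This is the same device as in the proof of Lemma~\ref{lem:injonHlplus}, but it is genuinely simpler here because we no longer need $V$ and $W$ to be infinite-dimensional: in the $\lplus$ case one needed room inside $V$ to absorb $W'$ via a \emph{symplectic} isomorphism, whereas a plain linear retraction onto a finite-dimensional subspace always exists. The only points to check carefully — and the closest thing to an obstacle — are the bookkeeping claims that a bounded amount of data (finitely many hairs, one vector each) suffices to descend to finite-dimensional $V_0,W_0$, and that the $\hairy$ construction really is functorial for arbitrary linear maps with $\phi_*$ commuting with $\bdry_\hairy$; both are recorded in Section~\ref{complex} (the grading by degree and by number of hairs, and the Remark on functoriality), so no new work is needed. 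One should also note, as for Lemma~\ref{lem:injonHlplus}, that the same argument applies verbatim to the connected version $\phairy_V$.
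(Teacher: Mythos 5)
Your proof is correct, and it rests on the same basic observation as the paper's: since $\hairy_V$ ignores the symplectic form, every linear injection or surjection of vector spaces splits, and the splitting can be pushed through the functor. The paper exploits this more directly, though: given $i\colon V\hookrightarrow W$ it chooses a \emph{global} retraction $p\colon W\to V$ with $p\circ i=\id_V$ (a complement of $V$ in $W$ exists even when both are infinite-dimensional), applies the functor, and concludes $p_*\circ i_*=\id$ on homology, so $i_*$ is injective; surjections are handled symmetrically with a section. Your injection argument instead runs the cycle-and-boundary chase of Lemma~\ref{lem:injonHlplus}, descending to finite-dimensional subspaces $V_0\subseteq V$, $W_0\subseteq W$ and retracting there. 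This works, but the finite-dimensional reduction buys you nothing here: unlike the symplectic setting, a linear retraction $W\to V$ already exists globally, and finite-dimensionality was needed in Lemma~\ref{lem:injonHlplus} only to manufacture a \emph{symplectic} isomorphism absorbing $W'$ into $V$. One small caution on your surjection case: surjectivity at the chain level does not by itself give surjectivity on homology, so \emph{a fortiori} is not the right inference; what does the work is that your label-lifting map is a chain map $s_*$ with $\phi_*\circ s_*=\id$, and this identity passes to homology. With that phrasing tightened, both halves are complete and agree in substance with the paper.
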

\begin{proof}
Let $i\colon V \hookrightarrow W$ be an inclusion of vector spaces. Since
every subspace has a complement we can find a projection $p:W \to V$
such that the composition $p\circ i$ is the identity.
Applying the functor to this map gives
\begin{diagram}[nohug]
V      &                &   &   \hspace{2cm} & H_*(\hairy_V) &            &  \\
\dEq   & \rdInto(2,1)^i & W &                & \dEq          & \rdTo(2,1)^{i_*} & H_*(\hairy_W) \\
V      & \ldTo(2,1)_p   &   &                & H_*(\hairy_V) & \ldTo(2,1)_{p_*} &
\end{diagram}
By functoriality we have that $p_*\circ i_* = (\id)_* = \id$
therefore $i_*$ is injective.

The proof that the functor preserves surjections is similar.
\end{proof}

\begin{remark}
\label{rem:not_ab_category} The reason this proof does not work for  $H_*(\lplus_V)$ is that there is generally no projection $p\colon W\to V$  which preserves the symplectic form, so there is no induced map $H_*(\lplus_W)\to H_*(\lplus_V)$.  In category-theoretic language,  not every object is projective (or injective) in the category of symplectic vector spaces.
\end{remark}

\section{The Trace map}
\label{sec:Trace_map}

There is an obvious inclusion $\iota\colon\ext\lplus_V\rightarrow \hairy_V$
which sends a wedge $X$ to an $\cO$-graph with no oriented edges,
i.e. it simply erases the wedge symbols, keeping the ordering of the spiders.
However, this is not a chain map:  the differential $\bdry_\hairy$
is zero on the image, but the Chevalley-Eilenberg differential on $\ext \lplus_V$ is  not zero.
In this section we define a new map $\Tr_V\colon\ext\lplus_V\to \hairy_V$,
which \emph{is} a chain map.  Basically, the map $\Tr_V$ sums over all
possible ways to match spider legs to form an $\cO$-graph.
Unless we need to specify the vector space $V$,
we will denote the trace map simply by $\Tr$. We remark  that this trace is not the trace considered in \cite{CVMorita} but is closely related.

\begin{definition}
\label{def:mathicing}
Let ${\G}=(G, or,\{o_v\},  \{x_\lambda\})$ be a hairy $\cO$-graph, with hairs $L=L(\G)$.  A {\em matching} $M$  is a partition of a subset of $L$ into
unordered pairs.
\end{definition}

\noindent Given a matching $M$, let  $\G^M$  denote the  element of $\hairy$  obtained by
\begin{enumerate}
\item  connecting each pair of hairs $\{\lambda,\mu\}\in M$   with an oriented edge from $\lambda$ to $\mu$,
\item  erasing the labels $x_\lambda$ and $x_\mu$,  and
\item  multiplying the result by the product    $\prod \omega( x_\lambda,x_\mu)$.
\end{enumerate}
Note that $\G^M$ is a well-defined element of $\hairy_V$.  In particular, it doesn't matter whether we orient an edge from  $\lambda$ to $\mu$ and multiply by $\omega( x_\lambda,x_\mu)$ or orient it from $\mu$ to $\lambda$ and multiply by $\omega( x_\mu,x_\lambda)$.

For $X=s_1\wedge\ldots\wedge s_k$,  we define
$$
\Tr(X) = \sum_{M} (\iota X)^M,
$$
where $M$ runs over all possible matchings of the hairs of $\iota X$ (i.e. matchings of the legs of the spiders $s_i$).
The map $\Tr$ can also be described as
$$
\Tr(X)= \exp (T)(\iota X)=  \iota X + T(\iota X ) + \frac{1}{2!}T^2(\iota X )+
    \frac{1}{3!}T^3(\iota X )+\ldots,
$$
where $T\colon\hairy_V\to\hairy_V$ sums over matchings with exactly one pair, i.e.
$$  T(\G)=\sum_{|M|=1} \G^M$$ (see Figure~\ref{T}).
\begin{figure}
\begin{center}
\ifpdf\includegraphics[width=4in]{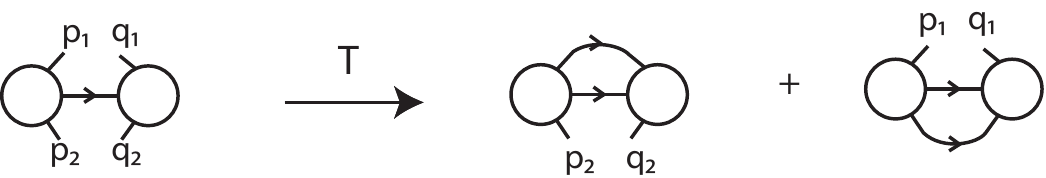}\fi
\caption{The map $T\colon\hairy_\infty\to\hairy_\infty$}\label{T}
\end{center}
\end{figure}  Note that both $T$ and $\exp (T)$ preserve degree, and that neither $T$ nor  $\exp (T)$ is a chain map.

\begin{proposition}
\label{prop:Tr_as_chain_map}
$\Tr$ is a chain map.
\end{proposition}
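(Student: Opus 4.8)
The plan is to verify the chain-map identity $\bdry_\hairy\circ\Tr=\Tr\circ\bdry_{\text{Lie}}$ by comparing the two sides term-by-term according to which pair of legs gets connected. Write $\Tr=\exp(T)\circ\iota$, so both sides are built out of $\iota$, the matching operator $T$, and the two boundary operators. The key structural fact I would isolate first is a commutation relation between $T$ and $\bdry_\hairy$ on $\hairy_V$: applying $\bdry_\hairy$ after adding one new edge either contracts that new edge — which produces exactly a fused-spider term $(s_i\cdot s_j)_e$ of the type appearing in $\bdry_{\text{Lie}}$ — or contracts a pre-existing internal edge, which commutes past the matching. Schematically this should read $\bdry_\hairy\circ T = T\circ\bdry_\hairy + D$, where $D$ is the operator that connects two legs by an edge and immediately collapses it; and crucially $D\circ\iota$ is precisely $\iota\circ\bdry_{\text{Lie}}$, once one checks the signs. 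The sign bookkeeping is exactly why the boundary operator $\bdry_\hairy$ was defined with the factor $(-1)^{i+j+1}$ imitating Chevalley–Eilenberg: collapsing a freshly-added edge between $s_i$ and $s_j$ reproduces the $(-1)^{i+j+1}[s_i,s_j]\wedge\cdots$ term.

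From this the proposition should follow by a generating-function manipulation. Since $T$ preserves degree and, on each degree-$d$ summand with $V$ finite-dimensional, $\hairy_V\degree{d}$ is finite-dimensional so only finitely many powers $T^m$ are nonzero on any given chain, $\exp(T)$ is a well-defined operator and we may manipulate it formally. From $\bdry_\hairy T = T\bdry_\hairy + D$ one wants to deduce $\bdry_\hairy\exp(T) = \exp(T)\bdry_\hairy + (\text{something})$; the cleanest route is to check that $D$ commutes appropriately with $T$ — i.e. that adding-and-collapsing one edge commutes with adding another matched edge — so that $D$ behaves like a derivation-compatible term and $\bdry_\hairy\exp(T)=\exp(T)(\bdry_\hairy+D)$. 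Composing on the right with $\iota$ and using $\bdry_\hairy\circ\iota=0$ together with $D\circ\iota=\iota\circ\bdry_{\text{Lie}}$ gives
\[
\bdry_\hairy\circ\Tr=\bdry_\hairy\circ\exp(T)\circ\iota=\exp(T)\circ D\circ\iota=\exp(T)\circ\iota\circ\bdry_{\text{Lie}}=\Tr\circ\bdry_{\text{Lie}},
\]
as desired. For infinite-dimensional $V$ one passes to the direct limit using functoriality, since $\Tr$ is natural in $V$.

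The main obstacle I expect is getting the commutator $\bdry_\hairy T=T\bdry_\hairy+D$ exactly right with all signs, and in particular making sure the ``mixed'' terms cancel: when $\bdry_\hairy$ collapses a pre-existing internal edge $e'$ and $T$ has added a new edge $e$, one must check that the resulting term matches, with the same sign, the term where $T$ first adds $e$ and then $\bdry_\hairy$ collapses $e'$ — the subtlety being the vertex-renumbering conventions in step (3) of the definition of $\bdry_\hairy$ and how connecting a new edge interacts with the ordering of internal vertices. A related point is that $T$ as defined acts on all of $\hairy_V$ and can connect two hairs on the \emph{same} internal vertex, creating a loop; one must confirm these loop-creating matchings either cancel in pairs or contribute consistently on both sides (and recall $\G_e=0$ when $e$ is a loop at a single vertex, i.e. $i=j$). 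I would organize the sign verification by fixing an ordered pair of legs and tracking a single orientation representative throughout, rather than trying to argue abstractly; once the one-edge commutation is nailed down, the exponential formalism makes the rest routine.
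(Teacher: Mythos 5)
Your argument is correct, and it does reach the paper's identity $\bdry_\hairy\circ\Tr=\Tr\circ\bdry_{\text{Lie}}$, but it is organized differently. The paper argues directly: $\Tr$ followed by $\bdry_\hairy$ gives all matchings with one external pair fused, while $\bdry_{\text{Lie}}$ followed by $\Tr$ gives exactly the same collection of terms, the signs agreeing because $\bdry_\hairy$ was defined to imitate the Chevalley--Eilenberg sign. You instead factor the statement through two operator identities, $\bdry_\hairy T=T\bdry_\hairy+D$ and $[T,D]=0$, together with $D\circ\iota=\iota\circ\bdry_{\text{Lie}}$ and $\bdry_\hairy\circ\iota=0$, and then run the exponential formalism; this is sound (the commutation checks go through because $T$ neither changes the vertex ordering nor introduces signs, while the $(-1)^{i+j+1}$ and the renumbering in $\bdry_\hairy$ are insensitive to the extra oriented edge, and same-vertex pairs are killed on both sides by the $i=j$ convention). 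What your packaging buys is modularity: the sign bookkeeping reduces to two local checks, and the identity $\bdry_\hairy\exp(T)=\exp(T)(\bdry_\hairy+D)$ is reusable elsewhere (it is essentially what makes $\beta_{d,V}=\alpha_{d,V}\circ\exp(-T)$ a chain map later in the paper); what the paper's version buys is brevity and uniformity in $V$. Two small simplifications to your write-up: $\exp(T)$ is well defined on any chain simply because each application of $T$ consumes two hairs, so no finite-dimensionality of $V$ or degreewise finiteness is needed, and for the same reason no passage to a direct limit is required for infinite-dimensional $V$ --- the argument applies verbatim.
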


\begin{proof}
We need to show $\bdry_\hairy\circ \Tr = \Tr\circ\bdry_{\text{Lie}}$.  A pair of spider-legs $\{\lambda,\mu\}$ is called \emph{internal} if they are on the same spider, and \emph{external} if they are on different spiders.  Applying $\Tr$ sums over all possible matchings; then applying $\bdry_\hairy$ sums over all ways of fusing a single external pair from each matching.  On the other hand $\bdry_{\text{Lie}}$ sums over all ways of matching one external pair  \emph{and} fusing it; applying $\Tr$ then gives all possible ways of matching subsets of the remaining legs.  In either case, the result is the sum of all matchings with one external pair fused.  The signs in the definition of $\bdry_\hairy$ are designed to agree with  $\bdry_{\text{Lie}}$.
\end{proof}

\bigskip
The main reason for studying the hairy graph complex $\hairy_\infty$ is to be able to get information about $H_*(\lplus_\infty)$ through knowledge of $H_*(\hairy_\infty)$. The following theorem, in combination with Theorem~\ref{thm:surjective}, allows us to do this. In the sequel to this paper we will analyze the case of finite $n$, showing that for large $n$, $\Tr_n$ is ``almost" injective.

\begin{theorem}
\label{th:Tr_injection}
\label{Th:injective}
\label{injection}
$\Tr_\infty$ induces an injection
$H_*(\lplus_\infty)\hookrightarrow H_*(\hairy_\infty)$.
\end{theorem}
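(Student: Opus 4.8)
The plan is to construct a retraction of chain complexes (up to homotopy) on the hairy side that undoes the trace map. Concretely, I would look for a chain map $R\colon \hairy_\infty \to \ext\lplus_\infty$ such that $R\circ \Tr_\infty$ is chain-homotopic to the identity on $\ext\lplus_\infty$ — or, more realistically, a filtration argument showing that on associated graded pieces $\Tr$ is split-injective, and then transferring this to homology. The natural candidate for a partial inverse is the map that cuts internal edges: given a hairy graph $\G$ with internal edge $e$, cut $e$ and reinsert a symplectic label pair on the two new hairs, summing over a symplectic basis $\cB_\infty$ (this is the standard ``co-trace'' built from $\sum_{x\in\cB} x\otimes x^*$, which is the identity element of $\SP_V$ acting on $V\otimes V$ and is basis-independent). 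Call the edge-cutting operator $S$; then $S$ roughly inverts $T$ in the sense that $ST$ acts on $C_k\hairy$ as multiplication by a positive integer depending on the number of available leg-pairs, at least after one restricts attention to one graph at a time.

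The key steps, in order, would be: (1) Define the co-trace $S\colon\hairy_V\to\hairy_V$ that sums over all ways to cut one internal edge and relabel the two resulting hairs by $\sum_{x\in\cB}(x,x^*)$, and observe $S$ is independent of the chosen symplectic basis. (2) Check the commutation relation between $S$ and $\bdry_\hairy$, and between $S$ and $T$; I expect $[S,T]$ (or $ST - TS$) to be a ``counting'' operator, diagonalizable with nonnegative integer eigenvalues, where the eigenvalue on a graph with $h$ hairs records something like $h(h-1)/2$. (3) Use this to show that $\exp(-S)$ or a suitable polynomial in $S$ provides a left inverse to $\Tr_\infty = \exp(T)\circ\iota$ on the level of the full complex $\hairy_\infty$ — here it is essential that we have stabilized, so that cutting an edge and introducing a fresh symplectic pair never runs out of room and $S$ genuinely inverts the combinatorics rather than producing correction terms. (4) Conclude that $\Tr_\infty$ is split-injective as a chain map, hence injective on homology.

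An alternative, and possibly cleaner, route avoids homotopies entirely: filter $\hairy_\infty$ by rank (the rank of $H_1(G)$), noting that $\bdry_\hairy$ preserves rank and $\Tr$ lands the image of $\ext\lplus_V$ (rank $0$) into all ranks, but the rank-$0$ part of $\Tr(X)$, obtained by matching \emph{no} legs, is exactly $\iota X$. So composing $\Tr$ with the projection $\hairy_\infty \twoheadrightarrow C_\bullet^{(\text{rank }0)}\hairy_\infty$ — wait, this projection is not a chain map since $\bdry_\hairy$ on a rank-$1$ graph can produce a rank-$0$ graph only by collapsing a loop, which it never does, so rank is genuinely preserved and the projection \emph{is} a chain map. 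But the rank-$0$ subcomplex of $\hairy_\infty$ is spanned by disjoint unions of hairy trees with no internal edges at all, i.e. literally $\iota(\ext\lplus_\infty)$ with zero differential, which is the wrong target. So the projection onto rank $0$ kills too much. The fix is to project onto the \emph{lowest} filtration level in the number of internal edges that still sees $X$: the edge-count-$0$ part of $\Tr(X)$ is $\iota X$, and since $\bdry_\hairy$ strictly decreases edge count while $\bdry_{\text{Lie}}$ followed by $\iota$ does too (the Lie bracket fuses legs but introduces no edge), one can set up a spectral-sequence/filtration comparison where on the $E_0$ page $\Tr$ induces the identity $\iota$.

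The main obstacle I expect is step (3): verifying that the combinatorial correction terms arising from $[S,T]$ and from $\bdry_\hairy$ genuinely cancel after stabilization, and in particular checking that introducing fresh symplectic pairs via $S$ does not interact badly with the operad linearity relations and the orientation signs on hairy graphs. The sign bookkeeping in matching $S$ (which cuts an edge, hence splits a vertex — but wait, $S$ as I described cuts an edge without splitting a vertex, so it actually raises the rank and lowers the edge count simultaneously, or rather lowers both edge count and $H_1$) against the Chevalley–Eilenberg signs built into $\bdry_\hairy$ is exactly the place where the proof in \cite{CVMorita} of the earlier trace being injective had to be done carefully, and the higher-rank setting here compounds it. A secondary subtlety is that $\Tr_\infty$ is defined on $\ext\lplus_\infty$ but we want injectivity on $H_*(\lplus_\infty)$: this is handled by the fact that $H_*(\lplus_\infty) = \dirlim H_*(\lplus_n)$ together with Lemma~\ref{lem:injonHlplus}, so it suffices to produce, for each cycle, a large enough finite $n$ in which the splitting argument applies — which is precisely what stabilization buys us.
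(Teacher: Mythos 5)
Your overall strategy --- cut the oriented edges and relabel the new hairs with symplectic pairs so as to split $\Tr$ --- is the same idea the paper uses, but the specific mechanism in your step (3) breaks down, and it breaks down exactly at the point you flag as the main obstacle. If the co-trace $S$ relabels a cut edge by $\sum_{x\in\cB}(x,x^*)$ with $\cB$ a basis of the \emph{same} space $V$, then for $V=V_\infty$ the sum is infinite and $S$ is not even a well-defined endomorphism of $\hairy_\infty$, while for $V=V_n$ the operator $T\circ S$ contains the term in which $T$ rematches the freshly created pair, producing a factor proportional to $\dim V_n=2n$; so $[S,T]$ is a counting operator whose eigenvalues grow with $n$, not the dimension-independent quantity $h(h-1)/2$ you predict, and no fixed polynomial or exponential in $S$ can left-invert $\exp(T)\circ\iota$ uniformly in $n$ (nor do the interactions with $\bdry_\hairy$ cancel). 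This is precisely why the paper says one must be ``slightly devious'': its edge-cutting map $\alpha_{d,V}$ relabels the cut edges by dual pairs taken from a \emph{fresh} auxiliary symplectic space $W_d$ disjoint from $V$ (with distinct indices for distinct pairs, averaged over all such labelings), and is precomposed with $\exp(-T)$, giving a chain map $\beta_{d,V}=\alpha_{d,V}\circ\exp(-T)\colon\hairy_V\degree{d}\to\ext\lplus_{V\oplus W_d}\degree{d}$. The composite $\beta_{d,V}\circ\Tr$ is then not the identity but the \emph{inclusion} $\ext\lplus_V\degree{d}\hookrightarrow\ext\lplus_{V\oplus W_d}\degree{d}$, and injectivity on homology is finished by Lemma~\ref{lem:injonHlplus} applied to $V_\infty\subset V_\infty\oplus W_d$ --- this is the actual role of stabilization, rather than the ``large enough $n$'' use you suggest. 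Indeed the paper warns that a genuine retraction onto $\ext\lplus_n$ itself, which is what your step (4) asks for, need not exist: $\Tr_n$ may fail to be injective on homology for finite $n$.

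Your alternative filtration route also has a gap: injectivity of a filtered map on an early page of a spectral sequence (or on the associated graded, where you observe that the edge-count-zero part of $\Tr(X)$ is $\iota X$) does not propagate to injectivity on homology --- a class nontrivial on $E_1$ can die on a later page in the target while surviving in the source --- and moreover $\Tr$ does not preserve the edge-count filtration in the required direction, since it raises the edge count arbitrarily. So as written neither branch of the proposal closes; the missing ingredient in both cases is the paper's device of introducing new labels from an auxiliary space $W_d$ so that the splitting lands in a larger Lie algebra where Lemma~\ref{lem:injonHlplus} applies.
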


We would like to prove the theorem by  defining   chain maps $\beta_n\colon\hairy_n\rightarrow\ext\lplus_n$
and proving that $\beta_n\circ \Tr_n(X)=X$, so that $\beta_n\circ \Tr_n$ induces
the identity on homology for all $n$.
The map   $\exp (-T)\colon\hairy\rightarrow\hairy$  is an inverse to $\exp (T)$,
so we just need   a map $\alpha$ which breaks up a hairy $\cO$-graph into
$\cO$-spiders in such a way that $\beta_n=\alpha\circ \exp (-T)$ is a chain map.
This almost works, but not quite;
it turns out that  $\Tr_n$ may  not induce an injection
$H_*(\lplus_n)\rightarrow H_*(\hairy_n)$ for finite $n$, but only becomes injective after stabilization.  To prove injectivity we must therefore be slightly devious when defining $\alpha$.

Any basic hairy $\cO$-graph $\G$ is equal to $(\iota X)^M$ for some wedge $X$ of basic spiders and some matching $M$.  Then $\exp (-T)(\G)$ is  given by the formula
$$
\exp (-T)(\G)=\sum_{M'\supseteq M}(-1)^{m'-m}(\iota X)^{M'}.
$$
where  the sum is over all matchings $M'$ containing $M$  (or,  equivalently, the sum over all matchings of   the hairs of $\G$), $m$ is the size of $M$ and $m'$ is the size of $M'$.

We want a map $\alpha$ which cuts apart each
hairy $\cO$-graph into a wedge of $\cO$-spiders, labels the cut edges with
new labels which are distinct from the old labels, and multiplies the result by a coefficient which will ensure that $\alpha$ is well defined and that  the composition $\alpha\circ \exp (-T)$ is  a chain map.
We will define $\alpha$ separately on the degree $d$ subcomplexes  $\hairy_V\degree{d}$, writing  $\alpha_{d,V}$ for the restriction of $\alpha$ to $\hairy_V\degree{d}$.

A hairy $\cO$-graph $\G$ of degree $d$ has at most $3d$  hairs, so any matching of its hairs has at most $[\frac{3d}{2}]$ elements.   Set $N=[\frac{3d}{2}]$, and
fix a new  $2N$-dimensional vector space $W_d$ with symplectic basis $\cB'= \{p'_{1},q'_{1},\ldots,p'_{N},q'_{N}\}.$ The target of $\alpha_{d,V}$ will be the exterior product $\ext\lplus_{{V}\oplus W_d}\degree{d}$ instead of $\ext\lplus_V\degree{d}$, i.e. the labels on spider legs are allowed to be in $V\oplus W_d$.

For $\G=(\iota X)^M\in\hairy_V\degree{d}$,  define a \emph{state} $s$ of $M$ to be an assignment of  new labels from $\cB'$  to the hairs in $M$ such that
\begin{enumerate}
\item the labels on each pair in $M$ are dual, i.e. they are $ p_i'$ and $q_i'$ for some index $i $ with $1\leq i\leq N$,
\item the indices for  different pairs in $M$ are distinct,
\item replacing the original labels on spider legs in $M$ by the new labels in $s$ results in a new wedge of spiders denoted $X\{s\}$, with $(\iota X\{s\})^M=\pm\G=\pm(\iota X)^M$.
\end{enumerate}
Let $S(M)$ denote the set of all   possible states of $M$.  The number of such states is $$|S(M)|=2N(2N-2)(2N-4)\ldots (2N-2m+2)= {2^mN!}/{(N-m)!}.$$

We define the \emph{sign} of a state   to be $\sigma(s)=1$ if $(\iota X\{s\})^M= \G$ and $\sigma(x)=-1$ if $(\iota X\{s\})^M=-\G$.  We now define $\alpha_{d,V}\colon \hairy_V\degree{d}\to \ext \lplus_{{V}\oplus W_d}\degree{d}$ by summing over all possible states, normalized by the number of possible states:
$$
\alpha_{d,V}(\iota X^M)=\frac{1}{|\mathcal S(M)|} \sum_{s\in \mathcal S(M)} \sigma(s)X\{s\}=\frac{(N-m)!}{2^mN!} \sum_{s\in \mathcal S(M)}  \sigma(s)X\{s\}.
$$

\begin{lemma}
$\beta_{d,V}=\alpha_{d,V}\circ\ \exp (-T): \hairy_V\degree{d} \to \ext\lplus_{V\oplus W}\degree{d}$ is a chain map.
\end{lemma}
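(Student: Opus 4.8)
The plan is to verify $\bdry_{\mathcal H}\circ\beta_{d,V} = \beta_{d,V}\circ\bdry_{\mathcal H}$ directly, by unwinding both composites on a basic hairy $\cO$-graph $\G = (\iota X)^M$ of degree $d$. Since $\exp(-T)$ commutes with neither differential on its own, the two coefficient systems — the signs $(-1)^{m'-m}$ coming from $\exp(-T)$ and the normalization $(N-m)!/(2^m N!)$ coming from $\alpha_{d,V}$ — must conspire. So first I would expand $\beta_{d,V}(\G) = \alpha_{d,V}(\exp(-T)(\G)) = \sum_{M'\supseteq M}(-1)^{m'-m}\alpha_{d,V}((\iota X)^{M'})$, and then apply $\bdry_{\mathcal H}$. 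On the other side, I would apply $\bdry_{\mathcal H}$ to $\G$ first (summing $\G_e$ over internal edges $e$ of the graph underlying $(\iota X)^M$, i.e. over pairs in $M$), and then apply $\beta_{d,V}$ to each term. The goal is to identify the two resulting sums term-by-term, indexed by the same combinatorial data: a pair $e\in M$ to be fused, together with an extension $M'\supseteq M$ describing the remaining matched hairs and a state $s'$ on $M'$.

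The key steps, in order, are: \textbf{(1)} Show $\alpha_{d,V}$ is well-defined on $\hairy_V\degree{d}$ — i.e. independent of the choice of wedge $X$ and matching $M$ presenting a given basic hairy graph, and respects the orientation and linearity relations; this is where the sign $\sigma(s)$ and the averaging over all states $S(M)$ pays off, since any ambiguity in the presentation permutes $S(M)$ compatibly with $\sigma$. \textbf{(2)} Compute $\alpha_{d,V}\circ\bdry_{\mathcal H}$ on $(\iota X)^{M'}$: fusing an edge $e\in M'$ and then spreading states over the remaining $m'-1$ pairs. \textbf{(3)} Compute $\bdry_{\mathcal H}\circ\alpha_{d,V}$ on $(\iota X)^{M'}$: first place a state $s$ on $M'$ (giving a wedge $X\{s\}$ of spiders living in $\lplus_{V\oplus W_d}$), then apply the Chevalley–Eilenberg $\bdry_{\text{Lie}}$, which fuses a pair of spider legs — here the crucial point is that because the new labels $p'_i,q'_i$ are mutually dual and all \emph{distinct} from each other and from the $V$-labels, the only external pairs that fuse nontrivially (contribute a nonzero symplectic pairing) are precisely the ones that were cut edges of $M'$; fusing legs labeled by $V$-elements or by non-dual $W_d$-labels gives zero. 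This is exactly the mechanism that makes the square commute. \textbf{(4)} Sew steps (2) and (3) together with the telescoping identity $\sum_{M'\supseteq M}(-1)^{m'-m}$ applied edgewise: when one fuses an edge $e$, the alternating sum over the extensions $M'\supseteq M\cup\{\text{something}\}$ collapses, and matching the normalization constants $(N-m)!/(2^m N!)$ against the state-counts $|S(M')| = 2^{m'}N!/(N-m')!$ yields the equality of coefficients. I would present this as the computation that $\bdry_{\mathcal H}\circ\exp(-T) = \exp(-T)\circ\bdry_{\mathcal H}'$ for an appropriate "lifted" differential, reducing the claim to Proposition~\ref{prop:Tr_as_chain_map} transported through $\alpha_{d,V}$.

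The main obstacle I anticipate is bookkeeping the signs: there are three independent sources — the graph orientation sign $(-1)^{i+j+1}$ built into $\bdry_{\mathcal H}$ (and mirrored in $\bdry_{\text{Lie}}$), the state sign $\sigma(s)$, and the $(-1)^{m'-m}$ from $\exp(-T)$ — and one must check they interlock so that the two composites agree on the nose rather than up to sign. The cleanest route is probably not to track signs atom-by-atom but to argue structurally: observe that $\alpha_{d,V}$ intertwines $\bdry_{\mathcal H}$ on hairy graphs (where an internal edge fuses two $\cO$-colored vertices) with $\bdry_{\text{Lie}}$ on wedges of spiders (where a matched external pair fuses two spiders), because collapsing an internal edge of $(\iota X\{s\})^{M'}$ is the \emph{same operation} as fusing the corresponding external spider-pair in $X\{s\}$ and then re-applying the relevant sub-matching — so $\bdry_{\mathcal H}\circ(\iota(-))^{M'} = (\iota(\bdry_{\text{Lie}}(-)))^{M'}$ up to the non-dual-pairings which vanish. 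Granting that compatibility, the only remaining content is the interaction of $\exp(-T)$ with $\bdry_{\text{Lie}}$, which is precisely the content of Proposition~\ref{prop:Tr_as_chain_map} read backwards, combined with the fact that $\exp(\pm T)$ are mutually inverse degree-preserving isomorphisms. I would therefore structure the write-up so that the genuinely new verification is confined to step (3) — that $\bdry_{\text{Lie}}$ on $X\{s\}$ only "sees" the cut edges — and cite Proposition~\ref{prop:Tr_as_chain_map} and the well-definedness of $\alpha_{d,V}$ for everything else.
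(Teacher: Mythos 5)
Your overall skeleton (expand both composites over matchings $M'\supseteq M$ and states, then match terms indexed by a fused pair $e$) is the same as the paper's, but the claim on which you hang the whole argument is false, and it is exactly the point where the real work lies. In step (3) you assert that in $\bdry_{\text{Lie}}(X\{s\})$ the only pairs of legs fusing with nonzero coefficient are the cut pairs of $M'$, because ``fusing legs labeled by $V$-elements\dots gives zero.'' It does not: the hairs of $\G$ carry arbitrary labels in $V$ (basis elements of $\cB$ for basic graphs), so two unmatched hairs labeled, say, $p_1$ and $q_1$ sitting on different spiders fuse with coefficient $\pm 1$. If your claim were true, $\alpha_{d,V}$ alone would already be a chain map and the factor $\exp(-T)$ would play no role in the lemma, contradicting your own remark that the signs $(-1)^{m'-m}$ must conspire with the normalization $(N-m)!/(2^mN!)$. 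In the paper's proof these extra fusions of $V$-labelled legs are precisely the content of its second observation: for a pair $e\notin M$, the contribution of a matching $M'$ containing $e$ (where $e$ carries dual $W_d$-labels and fuses with coefficient $\pm1$) cancels against the contribution of $M'-e$ (where the legs of $e$ keep their original $V$-labels and fuse with their original symplectic pairing), using the sign $(-1)^{m'-m}$, the count of $2(N-(m'-1))$ states of $M'$ extending a given state of $M'-e$, and the ratio $|S(M')|/|S(M'-e)|=2(N-m'+1)$; the case $e\in M$ is handled by the same count. So the mechanism is a cancellation between individually nonzero terms, not a pointwise vanishing, and since you explicitly confine ``the genuinely new verification'' to step (3), the proof as outlined fails.

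The structural shortcut in step (4) --- reducing to Proposition~\ref{prop:Tr_as_chain_map} ``read backwards'' --- also does not work: $\Tr=\exp(T)\circ\iota$ is far from surjective onto $\hairy_V\degree{d}$, so knowing that $\Tr$ is a chain map (and that $\beta_{d,V}\circ\Tr$ is the inclusion) gives no control of $\bdry_{\text{Lie}}\circ\beta_{d,V}-\beta_{d,V}\circ\bdry_\hairy$ on a general hairy graph. (A smaller slip: the identity to prove is $\bdry_{\text{Lie}}\circ\beta_{d,V}=\beta_{d,V}\circ\bdry_\hairy$, not the equation with $\bdry_\hairy$ on both sides stated at the outset, though your later steps show you intend the Chevalley--Eilenberg differential on the target.) The repair is to carry out the term-matching of your steps (2) and (4) with the cancellation mechanism above in place of step (3) --- which is exactly the counting argument the paper gives.
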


\begin{proof}  To   simplify notation we will identify $\ext\lplus_V$ with its image (under $\iota$) in $\hairy_V$ so that $(\iota X)^M$ becomes $X^M$.

We first compute $ \beta_{d,V} \bdry_\hairy(X^M)$:
\begin{align*}
\beta_{d,V}\bdry_{\hairy}(X^M)&
   =\beta _{d,V}\sum_{e\in M}X^{M}_e\\
&=\alpha_{d,V} \sum_{e\in M}\sum_{M'\supseteq M}(-1)^{(m'-m)}X^{M'}_e\\
&=\sum_{e\in M}\sum_{M'\supseteq M}(-1)^{(m'-m)}\frac{1}{|S(M'- e)|}\sum_{s\in S(M'- e)}\sigma(s)X_e\{s\}.\\
\end{align*}
On the other hand,
\begin{align*}
\bdry_{\text{Lie}}\beta_{d,V}(X^M)&
=\bdry_{\text{Lie}}\alpha_{d,V} \sum_{M'\supseteq M}(-1)^{(m'-m)}X^{M'}\\
&=\bdry_{\text{Lie}}\sum_{M'\supseteq M} (-1)^{(m'-m)}\frac{1}{|S(M')|}\sum_{s\in S(M')} \sigma(s)X\{s\} \\
&=\sum_{M'\supseteq M}(-1)^{(m'-m)}\frac{1}{|S(M')|}\sum_{s\in S(M')}\sum_{e\in E}\sigma(s) X\{s\}_e\\
&=\sum_{e\in E}\sum_{M'\supseteq M}(-1)^{(m'-m)}\frac{1}{|S(M')|}\sum_{s\in S(M')}\sigma(s) X\{s\}_e,
\end{align*}
where $E$ is the set of pairs of distinct legs of $X$.

The lemma now follows from the following two observations:

\begin{enumerate}
\item If $e\in M$, then $\frac{1}{|S(M')|}\sum_{s\in S(M')} \sigma(s)X\{s\}_e=\frac{1}{|S(M'- e)|}\sum_{s\in S(M'- e)}\sigma(s)X_e\{s\}$.

\item If $e\not\in M$, then the terms $(-1)^{(m'-m)}\frac{1}{|S(M')|}\sum_{s\in S(M')} \sigma(s)X\{s\}_e$ cancel.

\end{enumerate}
Both observations are true because each time the term $\sigma(s)X\{s\}_e$ arises from a matching $M'$ containing $e$ it occurs $2(N -(m' - 1))$ times, but it also arises once from the matching $M' - {e}$ with opposite sign.
\end{proof}

Returning to the functorial perspective, we have the following lemma.
\begin{lemma}
\label{lem:Tr_as_natural_transformation}
The family of maps $\Tr_V$ and $\beta_{d,V}$ form  natural transformations between the functors
$\ext\lplus_\dash\degree{d}\funct\hairy_\dash\degree{d}$ and
$\hairy_\dash\degree{d} \funct \ext\lplus_{\dash\,\oplus W_d}\degree{d}$ respectively.
\end{lemma}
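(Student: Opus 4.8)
The plan is to verify naturality of the two families one at a time and then to recognize $\beta_{d,\dash}$ as a composite of pieces already shown to be natural. Recall that a morphism in the source category is a linear map $\phi\colon V\to W$ with $\omega_W(\phi x,\phi y)=\omega_V(x,y)$; such a $\phi$ acts on $\ext\lplus_\dash\degree{d}$ and on $\hairy_\dash\degree{d}$ by pushing forward the leg-labels, and on $\ext\lplus_{\dash\oplus W_d}\degree{d}$ by $(\phi\oplus\id_{W_d})_*$, which is again a symplectic map for the orthogonal direct sum forms. In particular $V\mapsto\ext\lplus_{V\oplus W_d}\degree{d}$ really is a functor and the relevant naturality squares are well posed.

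First I would dispose of $\Tr$. Fix $\phi$; one must check $\phi_*\circ\Tr_V=\Tr_W\circ\phi_*$ on $\ext\lplus_V\degree{d}$. Since $\Tr_V(X)=\sum_M(\iota X)^M$ with $M$ running over matchings of the legs of the wedge $X$, and these matchings form a combinatorial set attached to $X$ that relabeling does not disturb, it is enough to check that for a fixed $M$ the operation $\G\mapsto\G^M$ commutes with $\phi_*$. Forming $\G^M$ means joining the matched hairs by oriented edges, erasing those two labels, and multiplying by $\prod_{\{\lambda,\mu\}\in M}\omega(x_\lambda,x_\mu)$: the first two steps are visibly natural, and the scalar is unchanged precisely because $\phi$ respects the symplectic forms. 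Summing over $M$ gives naturality of $\Tr$, and the same computation applied to one-pair matchings gives naturality of $T$, hence of $\exp(-T)$.

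Next, and this is the step that demands care, I would establish naturality of $\alpha_{d,\dash}$. Write a basic hairy graph as $\G=(\iota X)^M$. The key point is that the set of states $S(M)$, the normalizing constant $|S(M)|=2^mN!/(N-m)!$, and the sign $\sigma(s)$ of a state are combinatorial data attached to $M$ and to the oriented graph underlying $\G$ alone: conditions (1)--(3) defining a state, and the recipe for $\sigma(s)$, compare $(\iota X\{s\})^M$ with $\G$ using only the edge-orientations of $\G$ and the freshly assigned dual labels from $\cB'$, never the old labels $x_\lambda$ on the unmatched hairs. Hence $S(M)$ and the signs are literally the same for $\G$ and for $\phi_*\G=(\iota\,\phi_*X)^M$. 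Furthermore $(\phi\oplus\id_{W_d})_*$ acts as $\phi$ on the old labels (which lie in $V$) and trivially on the new labels (which lie in $\cB'\subset W_d$), so $(\phi\oplus\id_{W_d})_*\bigl(X\{s\}\bigr)=(\phi_*X)\{s\}$. Therefore
\begin{align*}
\alpha_{d,W}(\phi_*\G)&=\frac{1}{|S(M)|}\sum_{s\in S(M)}\sigma(s)\,(\phi_*X)\{s\}\\
&=(\phi\oplus\id_{W_d})_*\,\frac{1}{|S(M)|}\sum_{s\in S(M)}\sigma(s)\,X\{s\}=(\phi\oplus\id_{W_d})_*\,\alpha_{d,V}(\G),
\end{align*}
which is precisely the naturality square for $\alpha_{d,\dash}$. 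Composing, $\beta_{d,\dash}=\alpha_{d,\dash}\circ\exp(-T)$ is a composite of natural transformations and so is natural.

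I expect the main obstacle to be the bookkeeping in the $\alpha$-step: one must make sure that the formula for $\alpha_{d,V}(\G)$ does not covertly depend on the presentation $\G=(\iota X)^M$ in any way beyond what was already used in establishing that $\alpha_{d,V}$ is well defined, and that ``the same $S(M)$'' is a genuine identification of the states for $\G$ with those for $\phi_*\G$. Both points are handled by noting that a state is nothing but a dual labeling of the matched half-edges by elements of $\cB'$ satisfying the duality and distinctness conditions, together with the sign relating the outcome to $\G$ --- data that involve neither $V$ nor $\phi$. Once this is pinned down, the displayed computation, and naturality of $\Tr$ and $\exp(-T)$, follow with no further work.
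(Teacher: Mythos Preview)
Your proof is correct and follows essentially the same approach as the paper: the paper's argument is a single sentence asserting that the commutativity of the two naturality squares ``follows immediately from the definitions of the maps $\Tr$ and $\beta$,'' while you have carefully spelled out those definitions and verified the commutativity explicitly. Your decomposition $\beta_{d,\dash}=\alpha_{d,\dash}\circ\exp(-T)$ and separate verification of each factor is a reasonable way to organize what the paper leaves implicit, and your observation that the state set $S(M)$, the normalization, and the signs $\sigma(s)$ depend only on the combinatorics of the matching and the edge orientations (not on the $V$-labels) is exactly the content hidden in the paper's ``immediately.''
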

\begin{proof}
Let $\phi:V \to V'$ be a linear map which preserves the symplectic form.
The naturality of $\Tr$ and $\beta$ is equivalent to commutativity of
the diagrams
\begin{diagram}
\ext\lplus_V\degree{d} & \rTo^{\phi_*} & \ext\lplus_{V'}\degree{d}
& &
\hairy_V\degree{d} &  \rTo^{\phi_*} &  \hairy_{V'}\degree{d}
\\
\dTo_{{\Tr_V}\!\!\!\!\!\!\!\!\!} & \hspace{20mm}& \dTo_{{\Tr_{V'}\!\!\!\!\!\!\!\!\!}}
& \hspace{20mm}  &
\dTo_{\beta_{d,V}\!\!\!\!\!\!\!\!\!} & \hspace{20mm} & \dTo_{\beta_{d,V'}\!\!\!\!\!\!\!\!\!}
\\
\hairy_{V}\degree{d} &  \rTo^{\phi_*} &  \hairy_{V'}\degree{d}
& &
\ext\lplus_{V\oplus W_d}\degree{d} & \rTo^{\phi_*} & \ext\lplus_{V'\oplus W_d}\degree{d}
\end{diagram}
i.e. $\phi_* \circ \Tr_V = \Tr_{V'} \circ \phi_*$ and
$\phi_* \circ \beta_{d,V} = \beta_{d,V'} \circ \phi_*$.
The commutativity of these diagram follows immediately from the definitions of the maps $\Tr$ and $\beta$.
\end{proof}

\begin{proof}[Proof of Theorem~\ref{th:Tr_injection}.]
For any $V$ we have
$$
\beta_{d,V}(\Tr (X)) = \alpha_{d,V}(\exp (-T)(\exp (T)(\iota X)))) =\alpha_{d,V} (\iota X).
$$
But   $\iota X$ is an $\cO$-graph with no oriented edges,
so $\alpha$ has no edges to break and we just have $\alpha_{d,V}(\iota X)=X$.  In particular,  for $V=V_\infty$ we have $\beta_{d,\infty}\Tr_\infty$ is just the inclusion
 $\ext\lplus_\infty \degree{d}\hookrightarrow \ext\lplus_{{V_\infty\oplus W_d}}\degree{d}$.
By Lemma~\ref{lem:injonHlplus},  this inclusion induces an injection
on homology, which implies that
$\Tr_\infty\colon H_*(\ext\lplus_\infty\degree{d}) \to H_*(\hairy_\infty\degree{d})$ is injective.
Since the complexes $\ext\lplus_\infty$ and $\hairy_\infty$
break up as direct sums according to the degree $d$, we have that
$\Tr_\infty:\ext\lplus_\infty \to \hairy_\infty$ is injective on homology.
\end{proof}

Since $\Tr_*$ is injective, computing $H_*(\hairy_\infty)$  gives us a sort of  ``upper bound" on  $H_*(\lplus_\infty)$.  The next theorem gives a complementary ``lower bound."

 Let $V_n^+\subset V_n$ be the subspace spanned by  $\{p_1,\ldots,p_n\},$  and $V_\infty^+\subset V_\infty$ the union of all the $V^+_n$.  Let $\hairy^+_\infty\subset \hairy_\infty$ be the subcomplex spanned by hairy graphs with all labels in $V_\infty^+$.  The projection $p\colon \hairy_\infty\to \hairy^+_\infty$ sending a hairy $\cO$-graph $\G$ to itself if all labels are in  $V_\infty^+$  and to zero otherwise is clearly a surjective chain map.

\begin{theorem}\label{thm:surjective}
The composition
\begin{diagram}[inline]
H_*(\lplus_{\infty})
& \rTo^{\Tr_*}  &
H_*(\hairy_{\infty})
& \rTo^{p_*} &
H_*(\hairy_{\infty}^+)
\end{diagram}
is surjective.
\end{theorem}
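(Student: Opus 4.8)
The plan is to construct a splitting of the trace map over the $+$-subcomplex by producing, for each basic hairy graph with all labels in $V_\infty^+$, a natural preimage in $H_*(\lplus_\infty)$. The key observation is that a hairy $\cO$-graph $\G\in\phairy_\infty^+$ with labels among $\{p_1,p_2,\dots\}$ can be ``cut'' along each of its internal edges: at an internal edge $e$ joining vertices $v_i,v_j$, replace $e$ by two new hairs carrying dual labels $p_k, q_k$ from a fresh symplectic basis $\cB'$ of an auxiliary space $W$, choosing a fresh index $k$ for each edge. Doing this to \emph{all} internal edges of $\G$ simultaneously breaks $\G$ into a wedge of spiders $X(\G)\in\ext\lplus_{V_\infty^+\oplus W}$ whose only labels from $W$ come in dual pairs $\{p_k,q_k\}$, one pair per former internal edge. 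The map $\G\mapsto X(\G)$ (suitably normalized and sign-corrected, exactly as in the definition of $\alpha_{d,V}$) is essentially a one-sided inverse to $p\circ\Tr$: computing $\Tr(X(\G))$ and then projecting to the $+$-subcomplex, every matching of the $W$-labels that pairs a $p_k$ with a $q_k$ recovers $\G$ (since $\omega(p_k,q_k)=\pm1$), while any matching that pairs two labels from $V_\infty^+$ (i.e.\ two $p_i$'s) contributes a graph with a leg labeled in $V_\infty^+$, hence survives projection, \emph{but} also pairs a leftover $q_k$ with something — one checks that after projecting to $\hairy^+$ the only surviving term is $\G$ itself with coefficient $1$, because $\omega(p_i,p_j)=0$ forces all $W$-labels to be matched in dual pairs and forces the surviving matching to be exactly the ``reconnection'' matching.

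Concretely, the key steps are: (1) on each degree-$d$ piece, fix the auxiliary space $W_d$ as in the construction of $\alpha$; (2) define $\gamma_{d}\colon\phairy_{V_\infty^+}^+\degree{d}\to\ext\lplus_{V_\infty^+\oplus W_d}\degree{d}$ by total edge-cutting with fresh dual labels, normalized by the number of states just as $\alpha_{d,V}$ was; (3) verify $\gamma_d$ is a chain map — this is the analogue of the Lemma proving $\beta_{d,V}$ is a chain map, and the same state-counting cancellation argument applies, since collapsing an internal edge in $\phairy^+$ corresponds under $\gamma_d$ to applying $\bdry_{\text{Lie}}$; (4) show $p\circ\Tr_\infty\circ\gamma_d = \mathrm{incl}\colon\phairy_{V_\infty^+}^+\degree{d}\hookrightarrow\phairy_{V_\infty\oplus W_d}^+\degree{d}$ on the nose, by the matching bookkeeping in the previous paragraph (the crucial point being $\omega|_{V_\infty^+}=0$); (5) invoke Lemma~\ref{lem:injonHhairy}: the inclusion $\hairy^+_{V_\infty^+}\hookrightarrow \hairy^+_{V_\infty\oplus W_d}$ induced by $V_\infty^+\hookrightarrow V_\infty\oplus W_d$ is injective on homology, indeed split (and since $\hairy^+$ is functorial in the label space, the same splitting argument as in Lemma~\ref{lem:injonHhairy} applies), and conclude that $p_*\circ\Tr_*$ is surjective onto $H_*(\hairy^+_\infty)$, after re-identifying $V_\infty\oplus W_d$ with $V_\infty$ via a symplectic isomorphism as in Lemma~\ref{lem:injonHlplus}.

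I expect the main obstacle to be step (4): making the sign conventions and the normalization constants line up so that the surviving coefficient is exactly $+1$ rather than some combinatorial factor, and verifying that every ``unwanted'' matching really does die after applying $p$. The subtlety is that $\Tr$ sums over \emph{all} matchings of \emph{all} hairs (both the original $V_\infty^+$-hairs of $\G$ and the new $W_d$-hairs created by $\gamma_d$), so one must argue that any matching pairing an original hair with anything is killed by projection unless it is paired with a $W_d$-label — but $W_d$ and $V_\infty^+$ are symplectically orthogonal in $V_\infty^+\oplus W_d$, so $\omega$ vanishes on such pairs and those terms are simply zero, not merely projected away. This actually \emph{simplifies} the bookkeeping: the only matchings with nonzero coefficient are those pairing $V_\infty^+$-hairs among themselves and $W_d$-hairs among themselves; the former contribute $0$ since $\omega|_{V_\infty^+}=0$ (as long as $\G$ has at least one hair, which holds in positive rank or when there are hairs — and the rank-0, hairless case is vacuous since all hairy graphs have positive degree), so the only surviving matching is the dual-pair reconnection, giving coefficient exactly $1$ after the normalization chosen in (2). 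The remaining care is purely in orientation signs, handled exactly as in the $\beta_{d,V}$ lemma.
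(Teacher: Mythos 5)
Your proposal is in substance the paper's own proof. The paper likewise takes a cycle $z$ in $\hairy^+_\infty$ and applies the edge-cutting map $\beta_{d,n}$, which on the $+$-part equals $\alpha_{d,n}$ because $\omega$ vanishes on $V^+$ and hence $\exp(-T)(z)=z$ --- this is exactly your $\gamma_d$ --- and then runs precisely the matching bookkeeping you describe: pairings among $V^+$-labels vanish, pairings of a $V^+$-label with one of the new dual-pair labels vanish by orthogonality, any partial rematching leaves a hair carrying a ``$q$''-type label and is killed by $p$, and the unique total re-gluing survives with coefficient exactly $1$ thanks to the $1/|S(M)|$ normalization already built into $\alpha$.

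The one step you should tighten is the passage from $V_\infty^+\oplus W_d$ back to $V_\infty$. There is no symplectic isomorphism $V_\infty\oplus W_d\to V_\infty$ restricting to the identity on $V_\infty^+$: such a map would send the nondegenerate subspace $W_d$ into $(V_\infty^+)^\perp=V_\infty^+$, which is isotropic. So a single global re-identification, as in your step (5), does not keep the cycle $z$ and the projection $p$ fixed; it only shows that the image of $p_*\circ\Tr_*$ contains $\psi_*H_*(\hairy^+_\infty)$ for some injective but non-surjective change of labels $\psi$, which is not yet surjectivity. The fix --- and what the paper actually does --- is to argue cycle by cycle: the labels of a given cycle $z$ lie in $V_n^+$ for some finite $n$, so one may embed $W_d$ as $\mathrm{span}\{p_j,q_j\colon j>n\}\subset V_\infty$ while leaving the original labels untouched, obtaining a cycle $\tilde\beta_{d,n}(z)\in\ext\lplus_\infty$ with $p(\Tr(\tilde\beta_{d,n}(z)))=z$ on the nose. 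This is the same finite-support maneuver as in the proof of Lemma~\ref{lem:injonHlplus}, which you gesture at, but it must be applied separately for each cycle (the embedding of $W_d$ depends on $n$), not as one identification of $V_\infty\oplus W_d$ with $V_\infty$; once this is done, the appeal to Lemma~\ref{lem:injonHhairy} in your step (5) is unnecessary, since each class is hit literally at the chain level.
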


\begin{proof}  Since the complexes $\ext\lplus_\infty$ and $\hairy_\infty$
break up as direct sums according to degree $d$, it suffices to show this separately for each $d$.

Any cycle $z$ in $ \hairy_\infty^+\degree{d}$ is a sum of hairy graphs $\G$, all of whose labels are in $V_n^+\subset V_\infty^+$ for some $n$.  Then $z$ is also a cycle in $\hairy_n\degree{d}$, and since $\beta_{d,n}$ is a chain map $\beta_{d,n}(z)$ is a cycle in $\ext\lplus_{n\oplus W_d}\degree{d}$.  Choose an isomorphism of $W_d$ with a subspace of $V_\infty$ which sends each $\{p_i', q_i'\}$ to  $\{p_j,q_j\}$ for some $j>n$, and apply this isomorphism to the labels of  $\beta_{d,n}(z)$ which are in $W_d$.  The result  is still a cycle $\tilde\beta_{d,n}(z)$ but is now in $\ext\lplus_\infty$.   We claim that the image of this cycle under $\Tr\circ p$ is equal to  $z$.

Since the symplectic product is zero on the labels of $z$, $\exp (-T)(z)=z$, so $\beta_{d,n}(z)=\alpha_{d,n}(z)$.  Now  $\alpha_{d,n}(z)$ is obtained by breaking all edges of graphs in $z$ and labeling the resulting legs by labels in $W_d$.  In $\tilde\beta_{d,n}(z)$ the  labels in $W_d$ are replaced by labels in $V_\infty$.  We now apply $\Tr=\exp (T)\iota$ to $\tilde\beta_{d,n}(z)$.  The only matchings of $\iota\tilde\beta_{d,n}(z)$ which give non-zero terms when we apply $\exp (T)$ are those which originally came from edges of $z$.  The projection  $p$ then kills all terms of $\Tr(\tilde\beta_{d,n}(z))$ except the term which rematches all of the edges of $z$.  In other words, $p(\Tr(\tilde\beta_{d,n}(z))=z$.    Since every cycle is in the image of $p\circ \Tr$, the induced map on homology is surjective.
\end{proof}

Since  $H_*(\hairy^+_\infty)$ generates $H_*(\hairy_\infty)$ as a  $\GL(V_\infty)$-module,
Theorem~\ref{thm:surjective} shows that the image of $\Tr_*$ is at least large.
In the sequel to this paper, we will show that the $\SP$-module decomposition of the image of  $\Tr_*$ corresponds exactly to the $\GL$-decomposition of $\hairy_\infty$.

\section{Schur functors and the image of $\Tr_*$ }
We have defined several functors from the   category of vector spaces to itself.
By  classical representation theory  any such functor can be decomposed as a
direct sum of  \emph{Schur functors\,}  $\SF{\lambda}$  indexed by partitions $\lambda$.
The $\SF{\lambda}V$ are called \emph{Weyl modules}; they are nontrivial irreducible
representations of $\GL(V)$ if the dimension of $V$ is sufficiently large.
The module $\SF{\lambda}V$ can be defined using the irreducible representation $P_\lambda$ of the symmetric group $\Sigma_n$ corresponding to the partition $\lambda$ via $$
\SF{\lambda}V = P_\lambda \otimes_{\Sigma_n} V^{\otimes n},
$$
where the symmetric group acts on the tensor power $V^{\otimes n}$ by permuting the factors.
If $\lambda=(k)$ is the trivial partition of $k$, then $\SF{\lambda}V$ is the $k$-th symmetric power $S^kV,$ and if  $\lambda=(1,1,\ldots,1)$  then $\SF{\lambda}V$ is the $k$-th exterior power $\ext^kV.$  If $\dim(V)=n$ and $\lambda=(m,k-m)$, then
$$
\dim \SF{(m,k-m)}(V)=\frac{2m-k+1}{m+1}\binom{n-2+(k-m)}{k-m}\binom{n+m-1}{m}.
$$
In general if $\lambda$ is a partition of $k$, then   $\SF{\lambda}V$ is the image of the action of the   Young symmetrizer  $c_\lambda\in \F[\Sigma_k]$ on  $V^{\otimes k}$  (see, e.g.~\cite{FH}).

If the vector space $V$ has  a symplectic structure then $\SF{\lambda}V$ is also a representation of  $\SP(V)$, but  is not necessarily an irreducible representation.
It does have  a large irreducible component denoted by $\SpF{\lambda}V$.  If $V^+$ is any Lagrangian subspace of $V,$ then $\SpF{\lambda}V$  is   generated as an $\SP(V)$-module by $\SF{\lambda}V^+$, provided that the dimension of $V$ is large.

Applying this to the functor giving the degree $d$ component of $H_*(\hairy_V)$, we decompose
$$
H_*(\hairy_V\degree{d}) = \bigoplus {(\SF{\lambda}V})^{\oplus m_{d,\lambda}}
$$
and
$$
\SP(V) \cdot H_*(\hairy_{V^+}\degree{d}) = \bigoplus {(\SpF{\lambda}V)}^{\oplus m_{d,\lambda}}
$$

We will show in \cite{CKV2} that in fact $\SP(V) \cdot H_*(\hairy_{V^+}\degree{d})$ coincides with $
 \im \Tr_*$.

\section{$H_1(\hairy)$ and $\lplus^{\text{ab}}$ for the commutative operad}

Recall that the commutative operad   $\Com((n))=\F$ for all $n\geq 2$, with all compositions induced by multiplication in $\F$.  In this case all of the constructions we have given are very simple, especially in dimension 1.  We go through them here as a warm-up exercise.

\subsection{The commutative Lie algebra}
Basic commutative spiders can be thought of as star graphs, i.e. connected graphs with one central vertex and all other vertices univalent, labeled by basis elements of $V$.  Two spiders are fused by identifying a leg of one spider with a leg of the other, then collapsing this leg  and multiplying the result by the symplectic product of the leg labels.
In terms of Schur functors, for all $d$,
the functor  $V\funct  {\mathcal {LC}om_V\degree{d}}$   is simply
$$
\mathcal{L C}om_V\degree{d}\cong \SF{(d+2)}V\cong S^{d+2}V,
$$
where this is an isomorphism of $\GL(V)$-modules.

\subsection{Commutative hairy graph homology in dimension 1}
A hairy graph is just a finite graph with no bivalent vertices, whose univalent vertices are labeled by elements of $V$.  Since there are no hairy graphs with $0$ vertices $C_0\hairy=0$ and the first homology of $\hairy$ is the quotient of $C_1\hairy$ by the image of $\bdry_\hairy\colon C_2\hairy\to C_1\hairy$.  The generators of $C_1\hairy$ are hairy graphs $\G$ with one vertex.
If this vertex $v$ has valence at least $4$, then the half-edges at $v$
can be partitioned into two sets, each of size at least $2$,
such that no oriented edge has  
its half-edges in different pieces of the partition.
We can use this partition to blow up $v$ into an oriented edge $e$ in a new hairy graph $\G'$; then the boundary map just collapses $e$, and $\bdry_\hairy\G'=\G$.  If $v$ has valence $3$, then $\G$ cannot be in the image of $\bdry_\hairy$ becase $\bdry_\hairy$ preserves degree, and there are no graphs in $C_2\hairy$ of degree 1.  Therefore $H_1(\hairy)$ is generated by tripods and loops with one hair, but the loops with one hair have an orientation-reversing automorphism, so are zero.  The labels on the hairs of the tripod give an isomorphism
$$
H_1(\hairy)\cong H_1(\hairy)\degree{1}\cong
S^3V =\SF{(3)}V.
$$

\subsection{The abelianization of $\lplus$} By the previous paragraph, we need only consider spiders of degree 1.
The trace of a basic degree 1 spider in $\lplus$ is either a tripod (if all symplectic pairings on its labels are 0), a tripod plus a graph with one loop and one hair (if there is one non-zero pairing), or a tripod plus twice a  one-loop, one-hair graph (if there are two non-zero pairings).  Thus the image of the trace map is isomorphic to one copy of $S^3V$:

$$
\lplus^{\text{ab}}\cong\im(\Tr_*)\cong S^3V = \SpF{3}V.
$$

\section{$H_1(\hairy)$ and $\lplus^{\text{ab}}$ for the associative operad}

 For the associative operad, $\Assoc((n))$ is the $\F$-vector space with basis given by all cyclic orders of $\{1,\ldots n\}$. Compositions are induced by amalgamating two cyclic orders consistently into one.

The computations of $H_1(\hairy)$ and $\lplus^{\text{ab}}$ are  more subtle than in the commutative case, but the basic plan is the same:  we  first compute the hairy graph homology $H_1(\hairy)$ then compute the abelianization $\lplus_{\rm ab}$ by determining its image under $\Tr_*$ in $H_1(\hairy)$.  The computation illustrates the power of the trace map $\Tr_*$,  allowing us to show that the abelianization is mostly trivial with relative ease. The abelianization has one piece in degree $1$ and one piece in degree $2$ (computed by Morita~\cite{Morita2}) but vanishes for all higher degrees. We remark that in Morita's paper~\cite{Morita2}, the Lie algebra $\lplus$ is denoted by ${\mathfrak a}^+$.

\subsection{The associative Lie algebra}
Basic associative spiders  are now \emph{planar}  star graphs, i.e. connected planar trees with one central vertex and all other vertices univalent and labeled by elements of $V$.  The planar embedding can be thought of as a cyclic ordering on the edges. Two $\Assoc$-spiders are fused by identifying two univalent vertices,  collapsing the adjacent edges  and then multiplying the result by the symplectic product of the associated labels.
For all $d$ we have $\cL\Assoc_V\degree{d}\cong [V^{\otimes{d+2}}]_{\Z_{d+2}}$, the quotient of $V^{\otimes{d+2}}$ by the cyclic action which permutes the factors.   In terms of Schur functors, for small $d$ this decomposes as

\begin{itemize}
\item $\cL\Assoc_V\degree{0}\cong \SF{(2)}V$
\item $\cL\Assoc_V\degree{1}\cong \SF{(3)}V\oplus\SF{(1,1,1)} V$
\item $\cL\Assoc_V\degree{2}\cong \SF{(4)}V\oplus \SF{(2,2)}V\oplus \SF{(2,1,1)}V$
\item $\cL\Assoc_V\degree{3}\cong \SF{(5)}V\oplus \SF{(3,2)}V\oplus  2\SF{(3,1,1)}V\oplus\SF{(2,2,1)}V\oplus\SF{(1,1,1,1,1)}V$
\end{itemize}

\subsection{Associative hairy graph homology in dimension 1}\label{sec:assoc}

The 1-chains $C_1\hairy$ are  generated by basic hairy graphs $\G$  with one vertex $v$.  The half-edges at $v$ are cyclically ordered, and some of them may be joined in pairs by oriented edges $e$.

\begin{definition}
The central vertex $v$ of $\G$ is a  \emph{planar cut vertex} if the  half-edges adjacent to $v$ can be partitioned into two contiguous sets, each with at least two elements, so that every oriented edge has both of its half-edges in the same piece of the partition.
\end{definition}
For example, if $\G$ is not a tripod and $\G$ has two adjacent hairs at $v$, then $v$ is a planar cut vertex.

The boundary map on $C_1\hairy$ is zero, so all elements are cycles, and
the first homology of $\hairy$ is the quotient of  $C_1\hairy$  by the image of the boundary operator $\bdry_\hairy\colon C_2\hairy \to C_1\hairy.$
We begin with some observations about the image of this map.

\begin{lemma}\label{lemma:cut}
Let $\G$ be a generator of  $C_1\hairy$, with central vertex $v$.   If $v$ is a planar cut vertex, then   $\G$   is in the image of $\bdry_\hairy$
\end{lemma}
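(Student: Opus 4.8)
The plan is to exhibit an explicit preimage of $\G$ under $\bdry_\hairy\colon C_2\hairy\to C_1\hairy$. Suppose $v$ is a planar cut vertex, so the cyclically ordered half-edges at $v$ split into two contiguous blocks $A$ and $B$, each of size at least two, with every oriented edge having both its half-edges in the same block. I would construct a two-vertex hairy $\Assoc$-graph $\G'$ by ``blowing up'' $v$ into an internal edge $e$: create two vertices $v_1$ and $v_2$, put the half-edges of $A$ (together with one new half-edge of $e$) cyclically around $v_1$, and the half-edges of $B$ (together with the other new half-edge of $e$) cyclically around $v_2$. Since the two blocks are contiguous in the cyclic order, there is an essentially canonical way to insert $e$ so that the planar (cyclic) structures at $v_1$ and $v_2$ are compatible with the original cyclic order at $v$ upon collapsing $e$; the hairs and their $V$-labels, and the oriented edges internal to each block, are carried along unchanged. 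The sizes of $A$ and $B$ being $\geq 2$ is exactly what guarantees $v_1$ and $v_2$ are not bivalent, so $\G'$ is a legitimate element of $C_2\hairy$.

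The next step is to orient $\G'$ and compute $\bdry_\hairy(\G')$. Order the internal vertices as $(v_1,v_2)$ and orient $e$ from $v_1$ to $v_2$; keep all other orientation data (ordering of edge-half-edge pairs) matching that of $\G$. The graph $\G'$ has exactly two internal edges coming from a single old vertex only if... — more precisely, $\bdry_\hairy(\G')=\sum_{f\in \mathrm{IE}(\G')}\G'_f$, and I need to check which internal edges $f$ of $\G'$ contribute. The edge $e$ is the only internal edge joining $v_1$ to $v_2$; any other internal edge of $\G'$ lies entirely within $A$ or within $B$, hence is a loop at $v_1$ or at $v_2$, and for such an edge $\G'_f=0$ by definition (the case $i=j$ in the boundary operator). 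Therefore $\bdry_\hairy(\G')=\G'_e$, and collapsing $e$ fuses the two operad elements back into the original cyclic order at $v$, recovering $\G$ up to the sign $(-1)^{i+j+1}=(-1)^{1+2+1}=1$. So $\bdry_\hairy(\G')=\G$, up to an orientation sign that can be absorbed by replacing $\G'$ with $-\G'$ if necessary.

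The main obstacle I anticipate is bookkeeping rather than conceptual: verifying that the blow-up $\G'$ is \emph{well-defined} as an element of $C_2\hairy$ — i.e. that the choice of how to interleave the new half-edge of $e$ into the cyclic orders at $v_1$ and $v_2$ does not matter up to the linearity and orientation relations in $\hairy_V$, and that collapsing $e$ genuinely returns the original cyclic order at $v$ (this uses that $\Assoc$-composition amalgamates cyclic orders consistently). I would also need to double-check the orientation convention: that the ordering $(v_1,v_2)$ together with the chosen orientation of $e$ produces exactly $\G$ and not $-\G$ under $\bdry_\hairy$, which is a finite sign check using Definition of the orientation space $\det\R\,\mathrm{IV}(G)\otimes\bigotimes_{e}\det\R H(e)$. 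Once these routine verifications are in place, the lemma follows immediately from $\bdry_\hairy(\pm\G')=\G$.
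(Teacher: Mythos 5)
Your proposal is correct and is essentially the paper's proof: the paper also blows up the planar cut vertex into a separating edge $e$ to get a two-vertex graph $\G'$, so that all other internal edges are loops and contribute zero, leaving $\bdry_\hairy(\G')=\pm\G$. Your additional bookkeeping about cyclic orders, the valence condition, and the sign just fills in details the paper leaves implicit.
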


\begin{proof}  Since $v$ is   a planar cut vertex  it can be blown up  into a separating edge in a new $\Assoc$-graph $\G'$.  Then $\G=\bdry_\hairy(\G')$ (see Figure~\ref{fig:sep}).
\end{proof}

\begin{figure}
\ifpdf
\includegraphics[width=2in]{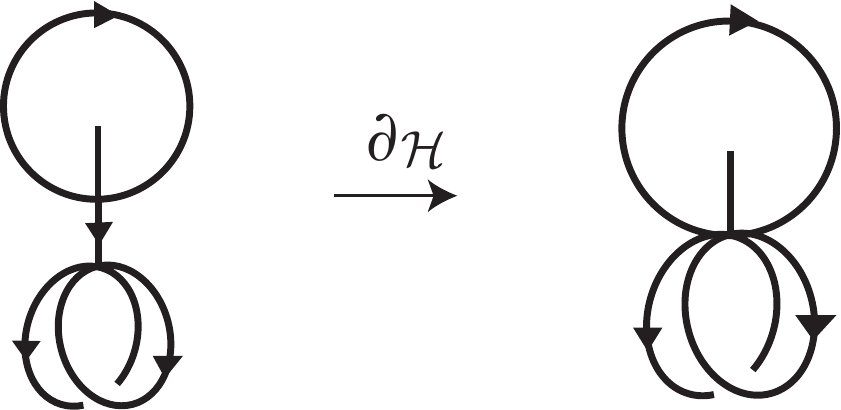}
\fi
\caption{Degree $1$ graphs with planar cut vertices are null-homologous.}\label{fig:sep}
\end{figure}

\begin{lemma}\label{lemma:hairslide}
Let $\G$ be a generator of $C_1\hairy$.  If $\G'$ is obtained from $\G$ by sliding a  hair at one end of an oriented edge of $\G$ along the edge  to the other end, then $\G'$ is homologous to $\G$.
\end{lemma}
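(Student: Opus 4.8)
The statement is a ``hair-slide'' invariance lemma: moving a hair from one endpoint of an oriented internal edge to the other gives a homologous cycle. Since the boundary map on $C_1\hairy$ is zero, $\G$ and $\G'$ are automatically cycles, so the content is that their difference is a boundary, i.e.\ lies in the image of $\bdry_\hairy\colon C_2\hairy\to C_1\hairy$. The plan is to exhibit an explicit $2$-vertex hairy $\Assoc$-graph $\widetilde\G$ whose boundary is (up to sign) $\G-\G'$, modulo terms already known to be null-homologous by Lemma~\ref{lemma:cut}.

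\textbf{Construction.} Let $e$ be the oriented internal edge of $\G$ along which we slide a hair $h$, say from the head of $e$ to its tail, and let $v$ be the central vertex (with its cyclic order of half-edges). First I would build a $2$-vertex graph $\widetilde\G$: replace $v$ by two vertices $v_1,v_2$ joined by an internal edge $f$, putting $h$ and one half of $e$ on $v_1$ and the other half of $e$ together with the remaining half-edges on $v_2$, all consistently with the cyclic order. Collapsing $f$ recovers $\G$; collapsing $e$ should recover $\G'$ (the hair $h$ has been ``carried across'' because the two incident half-edges of $e$ now get amalgamated into a single vertex with $h$ placed on the opposite side). Thus $\bdry_\hairy(\widetilde\G)=\pm(\G-\G') + (\text{other terms})$, where the other terms come from collapsing internal edges of $\G$ other than $e$. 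Each such other term is itself a $1$-vertex hairy graph in which $h$ sits adjacent (in the cyclic order) to a now-collapsed loop or to another hair coming from the split; I would check that in each case the resulting central vertex is a planar cut vertex (the half-edge $h$ together with one half of the collapsed edge forms a contiguous block of size $\geq 2$ on one side), hence null-homologous by Lemma~\ref{lemma:cut}. Tracking the signs from the four-step definition of $\bdry_\hairy$ (the ordering of $v_1,v_2$ and the orientations of $e,f$) then gives $\G\sim\G'$ in $H_1(\hairy)$.

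\textbf{Main obstacle.} The delicate points are (i) getting the planarity/cyclic-order bookkeeping right so that collapsing $e$ in $\widetilde\G$ genuinely yields $\G'$ and not some other hair placement, and (ii) verifying that \emph{all} the extra boundary terms have planar cut vertices --- this may require treating separately the cases where $e$ is a loop at $v$ versus a genuine edge between two internal vertices, and the sub-case where sliding $h$ makes it adjacent to another hair (which is exactly the situation flagged after Lemma~\ref{lemma:cut} as producing a planar cut vertex). I expect (ii) to be the real work; once the cut-vertex claim is established for every auxiliary term, the lemma follows formally. As this is a concrete finite combinatorial check rather than a conceptual difficulty, I would organize it around a single well-chosen picture (analogous to Figure~\ref{fig:sep}) and leave the sign verification to the reader.
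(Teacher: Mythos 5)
Your construction coincides with the paper's: the graph $\widetilde\G$ you describe (central vertex split into a trivalent vertex carrying $h$, one half of $e$ and one half of $f$, joined to the big vertex by the two parallel oriented edges $e$ and $f$) is exactly the ``midway through the slide'' two-vertex graph of Figure~\ref{fig:assoc}, whose boundary is $\pm(\G-\G')$. Your worry about extra boundary terms is unnecessary: a generator of $C_1\hairy$ has a single internal vertex, so every other internal edge of $\G$ has both half-edges on $v_2$ and is a loop there, and $\bdry_\hairy$ sends loop-collapses to zero by definition; hence no planar-cut-vertex analysis is needed, and the case of $e$ joining two distinct internal vertices never arises.
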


\begin{proof}  Midway through the slide we have a 2-vertex $\cO$-graph with two oriented edges between its vertices, whose boundary is  the difference of the original $\cO$-graph and the $\cO$ graph obtained by sliding (see Figure~\ref{fig:assoc}). Thus modulo boundaries, the two $\cO$-graphs are the same.
\end{proof}

\begin{figure}
\ifpdf
\includegraphics[width=4.5in]{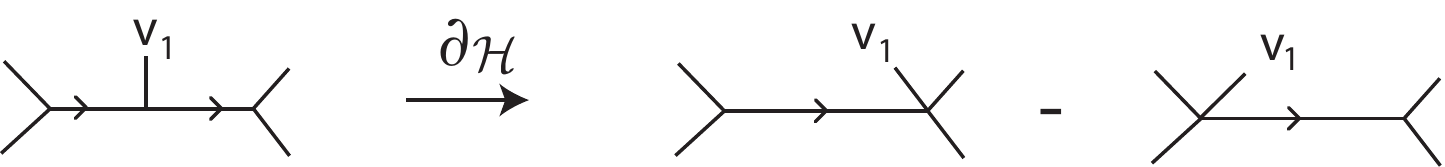}
\fi
\caption{Sliding a hair over an edge}\label{fig:assoc}
\end{figure}

We denote the half-edges of an oriented edge $e$ by $e^-$  and $e^+$.
\begin{definition} Two oriented edges $e$ and $f$  \emph{cross} if $e^-$ and $e^+$ separate $f^-$ and $f^+$  in the cyclic ordering.
\end{definition}

\begin{lemma}\label{lemma:trislide}   Let $\G$ be a generator of $C_1\hairy$, and $a$ an oriented edge of $\G$.  If $a$ is crossed by exactly two oriented edges $e$ and $b$, and if $e$ crosses no other oriented edges,  then $\G$ is homologous to the graph $\G_{e\curvearrowright b}$ obtained by sliding $e$ across $b$.
\end{lemma}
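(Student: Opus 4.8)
The plan is to argue exactly as in the proofs of Lemmas~\ref{lemma:cut} and~\ref{lemma:hairslide}. Since $H_1(\hairy)$ is the cokernel of $\bdry_\hairy\colon C_2\hairy\to C_1\hairy$ and $\bdry_\hairy$ preserves degree, it is enough to produce a single generator $\G''$ of $C_2\hairy$, of the same degree as $\G$, whose boundary equals $\G-\G_{e\curvearrowright b}$ up to sign and up to terms already known to be null-homologous.

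First I would construct $\G''$ as the ``intermediate stage'' of the slide, obtained by splitting the central vertex $v$ of $\G$ into two internal vertices $v_1,v_2$ along a partition of the half-edges at $v$ into two contiguous arcs, chosen so that $e^-$ lies in one arc and $e^+$ in the other, with the two cut-points placed on either side of $b$ so that the resulting collapse realizes the slide of $e$ across $b$. The hypothesis that $e$ is crossed by no oriented edge other than $a$ is exactly what makes this tractable: it forces every oriented edge of $\G$ except $e$ and $a$, and every hair, to stay attached to a single one of $v_1,v_2$. Hence the internal edges of $\G''$ joining $v_1$ to $v_2$ come only from $e$ and from $a$, and $\bdry_\hairy\G''$ has only a handful of terms, all explicitly understood.

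Next I would compute $\bdry_\hairy\G''$, a signed sum over collapses of the internal edges joining $v_1$ and $v_2$. The terms coming from collapsing the edge(s) produced by $e$ are $+\G$ and $-\G_{e\curvearrowright b}$ — collapsing ``on one side'' re-forms $e$ in its original position and ``on the other side'' re-forms it past $b$ — the opposite signs being forced by the sign convention built into $\bdry_\hairy$, precisely as in Lemma~\ref{lemma:hairslide}. The remaining term(s), coming from collapsing the copy of $a$, are one-vertex graphs whose central vertex I claim is a planar cut vertex: using that $a$ is crossed by \emph{only} $e$ and $b$ and that $e$ is crossed by nothing, the half-edges fall into two contiguous blocks — roughly, one carrying $e$ together with the part of the graph lying on one side of $b$, the other carrying everything else — with no oriented edge running between the two blocks. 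By Lemma~\ref{lemma:cut} these terms vanish in $H_1(\hairy)$, so $\bdry_\hairy\G''\equiv \G-\G_{e\curvearrowright b}$ modulo null-homologous graphs, giving $[\G]=[\G_{e\curvearrowright b}]$.

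I expect the genuine difficulty to be twofold, and in both parts fiddly rather than conceptual: (a) choosing the split of $v$ and the orientation on $\G''$ so that the two principal boundary terms appear with \emph{opposite} signs, i.e.\ as the difference $\G-\G_{e\curvearrowright b}$ and not a sum; and (b) the verification that collapsing $a$ produces a planar cut vertex, which is the one place all three hypotheses of the lemma are used and which requires a short case analysis tracking the cyclic positions of the remaining half-edges relative to $a$, $b$ and $e$. Step (b) is the analogue of, but slightly more delicate than, the cut-vertex check in Lemma~\ref{lemma:cut}.
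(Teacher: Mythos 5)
Your high-level template --- exhibit a two-vertex graph whose boundary consists of $\G$, the slid graph, and terms killed by Lemma~\ref{lemma:cut} --- is indeed the paper's strategy, but your intermediate graph $\G''$ is not the right one, and the boundary you ascribe to it cannot occur. Each term of $\bdry_\hairy$ omits exactly the edge that was collapsed, so a boundary term of $\G''$ can only be (isomorphic to) $\G$ or $\G_{e\curvearrowright b}$ if the collapsed edge is one that is \emph{not} already an edge of $\G$. If, as you describe, $\G''$ is obtained by merely redistributing the half-edges of $v$ onto two vertices so that the only connecting edges are $e$ and $a$, then no new half-edges have been created at all: $\G''$ has degree $\deg\G-2$ (contradicting your own requirement that it have the same degree as $\G$), and every term of $\bdry_\hairy\G''$ has one fewer oriented edge than $\G$. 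In particular collapsing $e$ cannot ``re-form $e$ in its original position'' or ``re-form it past $b$'' --- it destroys $e$. If instead you add a single blow-up edge $c$ joining $v_1$ to $v_2$ (as in Lemma~\ref{lemma:cut}) the degree is repaired, but then at most one boundary term (the collapse of $c$) can reproduce a graph with the full original edge set, so you can recover $\G$ but never $\G_{e\curvearrowright b}$ as a second term; no arc partition of the original vertex realizes the slide.

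The correct intermediate graph, and the true analogue of Lemma~\ref{lemma:hairslide} that you invoke, is obtained by letting the moving endpoint of $e$ sit \emph{on} the edge $b$: subdivide $b$ by a new trivalent vertex carrying that endpoint of $e$, so that the three oriented edges at the new vertex are $e$ and the two halves $b_1,b_2$ of $b$, while $a$ keeps both ends at the big vertex and hence contributes nothing to $\bdry_\hairy$. The two ``new'' edges $b_1,b_2$ play the symmetric roles you wanted: collapsing $b_1$ reconstitutes $b$ with the endpoint of $e$ in its original position (giving $\G$), collapsing $b_2$ reconstitutes $b$ with that endpoint slid past it (giving $\G_{e\curvearrowright b}$), and it is the collapse of $e$ --- not of $a$ --- that produces the one-vertex graph with a planar cut vertex; this last step is where the hypotheses that $a$ is crossed only by $e$ and $b$ and that $e$ crosses nothing else are used. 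So your role-assignment of the boundary terms is inverted in addition to the construction being off; with the subdivided-$b$ graph in place of your $\G''$, the remainder of your outline (sign bookkeeping plus Lemma~\ref{lemma:cut}) does go through as in the paper.
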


\begin{proof}  Midway through the slide we have a 2-vertex $\cO$-graph $\G'$ with three oriented edges at one of its vertices.  The original graph $\G$ is one term of  $\bdry_\hairy(\G)$.  The second term is the result of the slide.  The third term, obtained by contracting $e$, has a planar cut vertex, so is zero in homology (see Figure~\ref{trislide}).
\end{proof}

\begin{figure}
\ifpdf
\includegraphics[width=5in]{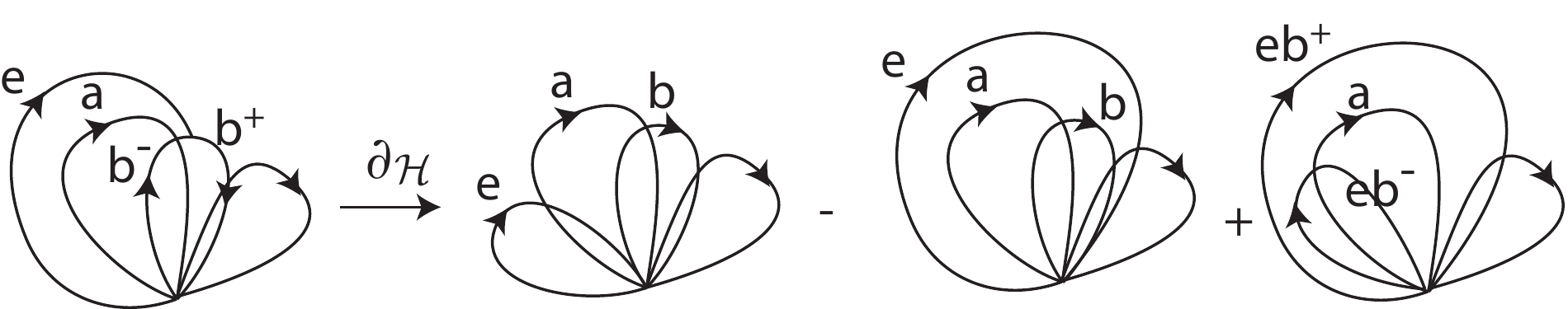}
\fi
\caption{$\bdry_\hairy(\G')=\G + \G_{e\curvearrowright b}+ 0$}\label{trislide}
\end{figure}

\begin{lemma}~\label{lemma:TheMove} Let $\G$ be a generator of $C_1\hairy$, let $e$ be an oriented edge of $\G$, and let $X$ be a set of contiguous half-edges between $e^+$ and $e^-$ which are not all hairs.    Then $\G$ is homologous to a sum of graphs which each have only one half-edge in place of $X$.
\end{lemma}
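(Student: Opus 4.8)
The plan is to induct on the number of half-edges in $X$, using the three ``slide'' lemmas (Lemmas~\ref{lemma:cut}, \ref{lemma:hairslide}, \ref{lemma:trislide}) together with the observation that any degree $1$ graph with a planar cut vertex is null-homologous. The base case is $|X|=1$, where there is nothing to prove. For the inductive step, suppose $X$ contains at least two half-edges and is not all hairs, so $X$ contains at least one half-edge $g$ which belongs to some oriented edge $f$. First I would reduce to the case where $f$ is ``innermost'' among the oriented edges meeting $X$: among all oriented edges with a half-edge in $X$, choose one, call it $f$, whose two half-edges $f^-,f^+$ cut off the smallest contiguous arc. If both half-edges of $f$ lie in $X$, then the arc between them (together with $f$) can be split off: the half-edges strictly between $f^-$ and $f^+$, being contiguous and enclosed, together with $f$ form one side of a partition of the half-edges at $v$, and the complementary side has size at least two since $|X|\ge 2$ and there is at least one half-edge outside $X$. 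Hence $v$ is a planar cut vertex unless $\G$ is a tripod, and by Lemma~\ref{lemma:cut} $\G$ is null-homologous, so the statement holds vacuously; alternatively, blowing up this cut realizes $\G$ as a boundary of a graph whose non-cut-vertex term has strictly fewer half-edges in (the image of) $X$, and we apply induction to that term.

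The main case is thus when $f$ has exactly one half-edge $g$ in $X$ and its other half-edge $g'$ lies outside $X$, on the other side of $e^+$ or $e^-$. The idea is to use the slide moves to push $g$ out of $X$ past one of the endpoints $e^+$ or $e^-$ of $e$, thereby decreasing $|X\cap (\text{half-edges of oriented edges})|$ — or, in the ``other'' direction, to slide $g$ together with the hairs in $X$ so that the entire block $X$ becomes a single half-edge sitting at a vertex of a $2$-vertex graph after a partial blow-up. Concretely: use Lemma~\ref{lemma:hairslide} to slide all the \emph{hairs} in $X$ along their neighboring oriented edges out of the block (if a hair in $X$ is adjacent to $g$, slide it along $f$; iterate), so that we may assume the half-edges of $X$ that remain are all ends of oriented edges. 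Then among these, take the innermost oriented edge $e'$; it is crossed by only the edges whose half-edges straddle it, and after the previous reductions we can arrange that $e'$ is crossed by at most two edges, one of which ($e$ itself, via the fact that $X$ lies between $e^+$ and $e^-$) and possibly one more; Lemma~\ref{lemma:trislide} then lets us slide $e'$ across that edge, reducing the size of the enclosed arc. Iterating collapses $X$ down to a single half-edge, as desired.

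The step I expect to be the genuine obstacle is the bookkeeping that guarantees the hypotheses of Lemma~\ref{lemma:trislide} are actually met — namely that at the relevant stage the innermost oriented edge inside $X$ is crossed by \emph{exactly} two oriented edges, one of which crosses nothing else. This is a statement about the combinatorics of chord diagrams restricted to an arc, and it is plausible that one has to interleave applications of Lemmas~\ref{lemma:cut} and \ref{lemma:hairslide} to reduce to that situation rather than having it hold automatically; setting up the induction so that it terminates (a suitable lexicographic measure on the pair $(\#\text{oriented half-edges in }X,\ |X|)$, say) is where care is required. Once the right induction parameter is pinned down, each individual reduction is exactly one of the three quoted lemmas, applied to a $2$-vertex graph arising from a partial blow-up of $v$ along a contiguous partition, plus the vanishing of planar-cut-vertex terms.
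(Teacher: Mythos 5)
Your proposal does not close the argument, and the two places where it leans hardest are exactly where it breaks. First, the inductive engine is Lemma~\ref{lemma:trislide}, whose hypotheses are very restrictive (an edge crossed by \emph{exactly} two oriented edges, one of which crosses nothing else); for an arbitrary configuration of chords and hairs inside $X$ these hypotheses simply need not hold, and you acknowledge yourself that you cannot verify them or exhibit a terminating measure. As written this is a sketch of a hoped-for induction, not a proof. Second, the reduction in the subcase where an ``innermost'' oriented edge $f$ has both half-edges in $X$ is incorrect: choosing $f$ with smallest enclosed arc does not prevent another oriented edge from crossing $f$, so the closed arc from $f^-$ to $f^+$ need not be a piece of a partition in which every oriented edge has both half-edges on one side. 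Hence $v$ need not be a planar cut vertex and Lemma~\ref{lemma:cut} does not apply there. Similar problems afflict the hair-sliding step: a hair in the interior of $X$ need not sit at the end of an oriented edge, and even when Lemma~\ref{lemma:hairslide} applies, the other end of that edge may again lie between $e^-$ and $e^+$, so the slide need not remove anything from the region.

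The irony is that your ``fallback'' remark (blow up a cut and keep the non-trivial boundary terms) is the whole proof, applied once and globally rather than in a subcase. Form the two-vertex graph $\G'$ by moving \emph{all} of $X$ onto a new vertex $w$ and joining $w$ to the original vertex by a new oriented edge $a$, inserted at the place where $X$ sat. The internal edges of $\G'$ joining the two vertices are $a$ together with every oriented edge of $\G$ having exactly one half-edge in $X$ (edges with both half-edges in $X$ become loops at $w$ and contribute $0$ to $\bdry_\hairy$). Contracting $a$ returns $\pm\G$; contracting any other such edge $f$ splices $X\setminus\{f\}$ (together with one end of $a$) into the position of $f$'s half-edge outside $X$, leaving only the single half-edge of $a$ where $X$ used to be. Thus $\bdry_\hairy\G'=\pm\G+\sum(\text{graphs with one half-edge in place of }X)$, which is precisely the statement of Lemma~\ref{lemma:TheMove}; no sliding lemmas, crossing analysis, or induction is needed.
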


\begin{proof}  Form a graph $\G'$ by collecting all of the half-edges in $X$ at a single second vertex and then joining the two vertices by an oriented edge $e$.  Then $\G$ is one term of the boundary of $\G'$.  The other terms all have  only one half-edge in place of $X$  (see Figure~\ref{themove}).
\end{proof}

\begin{figure}
\ifpdf
\includegraphics[width=3.5in]{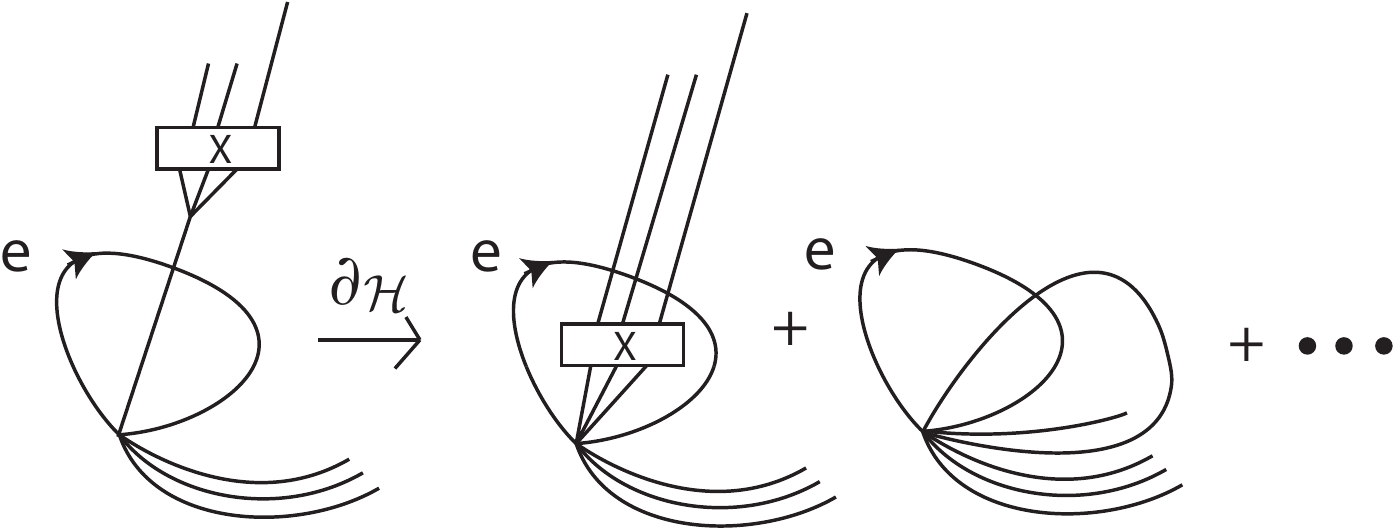}
\fi
\caption{Clearing $X$ from the interior of $e$}\label{themove}
\end{figure}

\begin{theorem} \label{thm:hairyassoc}
For $\cO=\mathcal{A}ssoc$, $H_1(\hairy)\degree{1}$ is generated by tripods and loops with one hair, $H_1(\hairy)\degree{2}$ is generated by  loops with two hairs on opposite sides of the loop, and $H_1(\hairy)\degree{d}=0$ for $d>2$.  As $\GL(V)$-modules, we have
$$
H_1(\hairy)\degree{1}  \cong  \SF{(3)}V \oplus \SF{(1,1,1)}V  \oplus V
\hbox{\rm \, and \, } H_1(\hairy)\degree{2}  \cong  \SF{(1,1)}V.
$$
\end{theorem}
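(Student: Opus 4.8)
The plan is to bound $H_1(\hairy)\degree{d}$ from above using the moves established in Lemmas \ref{lemma:cut}--\ref{lemma:TheMove}, and then check that for $d=1,2$ the surviving generators are linearly independent modulo boundaries. The starting point is that $C_1\hairy\degree{d}$ is spanned by one-vertex basic $\Assoc$-graphs: a cyclically ordered set of $d+2$ half-edges, some joined in pairs by oriented edges, the rest carrying labels in $\cB$. First I would use Lemma \ref{lemma:cut} to discard every generator whose central vertex is a planar cut vertex; in particular, by the remark after that definition, any graph (other than a tripod) with two adjacent hairs is null-homologous. So modulo boundaries we may assume no two hairs are cyclically adjacent. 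Next, the ``clearing'' move of Lemma \ref{lemma:TheMove}: between the two ends $e^+$ and $e^-$ of any oriented edge, a contiguous block of half-edges that is not entirely hairs can be replaced, modulo boundaries, by a single half-edge. Combined with the hair-slide (Lemma \ref{lemma:hairslide}) and the triangle-slide (Lemma \ref{lemma:trislide}), these moves should let me push all hairs together and collapse the internal-edge structure down to something very small. The key combinatorial claim to extract is: modulo boundaries, every generator is equivalent to a graph with at most one oriented edge. A one-vertex graph with a single oriented edge is a loop (possibly with several hairs), and a one-vertex graph with no oriented edges is a star/tripod; a loop with $\ge 1$ hair has $d+2 = (\text{number of hairs}) + 2$, so $d$ hairs.

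With that reduction in hand, the $d=1$ case is immediate: the only surviving generators are the tripod (three hairs, no edge) and the one-loop-one-hair graph, giving $\SF{(3)}V \oplus \SF{(1,1,1)}V$ from the cyclically ordered tripod (which is $[V^{\otimes 3}]_{\Z_3}$, matching $\cL\Assoc_V\degree{1}$) together with $V$ from the labelled hair on the loop. For $d=2$ the surviving generators are loops with two hairs, but a loop with two \emph{adjacent} hairs has a planar cut vertex hence dies by Lemma \ref{lemma:cut}; so only loops with the two hairs on opposite sides of the loop survive, and reading off the labels this is $[V \otimes V]$ with the orientation-reversing symmetry of the loop acting by a sign on the swap, i.e. $\SF{(1,1)}V = \ext^2 V$. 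For $d > 2$, after the reduction every generator becomes a loop with $d \ge 3$ hairs, and I would show these all vanish: with at least three hairs one can always arrange (using the slides to move hairs to one side of the loop, then Lemma \ref{lemma:TheMove} in the other arc) two hairs to become adjacent, producing a planar cut vertex. This is the step I would spell out most carefully.

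The main obstacle I anticipate is the bookkeeping in the reduction ``every generator $\equiv$ a graph with $\le 1$ oriented edge.'' Lemma \ref{lemma:trislide} has a hypothesis ($a$ crossed by exactly two edges, one of which crosses nothing else) that need not hold a priori, so one has to induct carefully on the number of internal edges and the number of crossings, repeatedly applying Lemma \ref{lemma:TheMove} to simplify the arcs between the ends of a chosen edge before a slide becomes available — and one must check the process terminates without reintroducing complexity. A second, smaller obstacle is the linear-independence (lower bound) half for $d=1,2$: one needs that the boundary map $\bdry_\hairy: C_2\hairy\degree{d} \to C_1\hairy\degree{d}$ does not produce any further relations among the surviving generators. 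Here I would argue that the only relations $\bdry_\hairy$ imposes on one-loop graphs are exactly the hair-slide relations already used, so the quotient is precisely the claimed Schur functor; matching dimensions against $\cL\Assoc_V\degree{d}$ for $d=1$ and against $\ext^2 V$ for $d=2$ then confirms nothing has been over-killed. The final identification as $\GL(V)$-modules is then just reading off the symmetry of the labels on the surviving generators, as indicated above.
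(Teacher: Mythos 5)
There is a genuine gap, and it sits exactly at the step you flagged as the main obstacle. Your key combinatorial claim --- ``modulo boundaries, every generator is equivalent to a graph with at most one oriented edge'' --- cannot hold in the form you need it. The boundary operator $\bdry_\hairy$ preserves the rank of the underlying graph (its first Betti number), so $C_*\hairy$ splits as a direct sum of subcomplexes indexed by rank, and no homology can convert a one-vertex graph with $r\geq 2$ oriented edges into a hairy loop (rank $1$) or a tripod (rank $0$). Such a generator can only be shown to be \emph{null-homologous within its own rank summand}; it can never be ``reduced'' to a loop with many hairs. Concretely, already in degree $2$ your argument omits the generator with two oriented edges and no hairs (cyclic order $e_1^-e_2^-e_1^+e_2^+$ at the vertex): it is not a two-haired loop and never becomes one, and one must instead observe that the cyclic rotation of its four half-edges is an orientation-reversing automorphism, so it is zero in $C_1\hairy$. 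For $d>2$ the same problem is the whole content of the theorem: after Lemma~\ref{lemma:cut} and the hair slides, the generators that remain are ribbon graphs of genus $\geq 1$ with at most one hair per boundary component, all of rank $\geq 2$. Your proposed endgame (a loop with $d\geq 3$ hairs has two hairs in one arc, hence a planar cut vertex) only kills the rank~$\leq 1$ part, which is the easy part; it says nothing about these high-rank classes.

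What is needed instead is a direct vanishing argument for every rank-$\geq 2$ generator of $C_1\hairy$, and this is where Lemmas~\ref{lemma:trislide} and~\ref{lemma:TheMove} actually get used in the paper: with at least three oriented edges, one first uses Lemma~\ref{lemma:TheMove} repeatedly to bring the graph into a standard interleaved form $e_0^-e_1^-e_0^+e_2^-e_1^+\cdots e_k^-e_{k-1}^+(h_1)e_k^+(h_2)$, and then slides $e_0^+$ across $e_2,e_4,\dots$ via Lemma~\ref{lemma:trislide}; depending on the parity of the number of edges this either produces a planar cut vertex (a boundary) or returns the same graph with one edge orientation reversed, forcing the class to be zero. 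The two-edge case is handled separately by the symmetry/hair-slide argument above. So your outline for $d=1$ and for the two-haired loops in $d=2$ is fine (and your lower-bound sketch can be replaced by the paper's sharper observations: a trivalent vertex cannot be in the image of $\bdry_\hairy$ since fusing two spiders yields a vertex of valence $\geq 4$, and the unique expansion of the $4$-valent vertex of the two-haired loop has zero boundary), but the reduction-to-loops strategy must be abandoned in favor of proving outright vanishing rank by rank.
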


\begin{proof}
This proof is an adaptation of the proof in \cite{mss} to our context.

Let $\G$ be a generator of $C_1\hairy$, i.e. a basic hairy graph with one vertex.   We may assume the central vertex $v$ is not a planar cut vertex, by Lemma~\ref{lemma:cut} .

The cyclic orderings at the vertices of an $\cO$-graph $\G$  give $\G$ a ribbon graph structure, so that $\G$ can be ``fattened" to an oriented  surface with boundary, where we think of the hairs as attached to the boundary.   If two hairs are attached to a single boundary component, then unless $\G$ is a tripod, the hairs  can be slid using Lemma~\ref{lemma:hairslide} to be adjacent, so that $\G$ is a boundary by Lemma~\ref{lemma:cut}.
Thus, we may assume that $\G$ has  at most one hair attached to each boundary component.

If $\G$ has no oriented edges, then $\G$ must be a tripod, by Lemma~\ref{lemma:cut}.

If $\G$ has one oriented edge, then $\G$ is either a loop with one hair or a loop with two hairs, on opposite sides of the loop.   If $\G$ has one hair  it  cannot be the boundary of anything  since the vertex is trivalent, so $\G$ represents a non-trivial element of $H_1(\hairy)$.   If $\G$ has two hairs, then it also represents    a nontrivial homology class. For if $\G=\bdry_\hairy \G'$, then $\G'$ would have to contain a graph which expands the $4$-valent vertex of $\G$ into an edge. There is one such graph up to isomorphism, and it has trivial boundary. Thus $\partial_\hairy \G'\neq \G$. .

If $\G$ has two oriented edges, then since $v$ is not a planar cut vertex the surface must be genus 1 with one boundary component and at most one hair.   The half-edges at $v$ are $e_1^-e_2^-e_1^+e_2^+(h)$, where $h$ is the (possible) hair.   If there is no hair, the automorphism which cyclically permutes these half-edges
$$e_1^- \to e_2^-\to e_1^+\to e_2^+\to e_1^-$$
reverses orientation, so $\G$ is zero in $C_1\hairy$ (see \cite{Jim}, proof of Proposition 2).   If there is a hair, then using Lemma 7.3 the hair can be slid across an edge to produce a homologous graph $G^\prime$ with the opposite orientation, showing that $G=0$ in homology.

Now suppose $\G$ has at least $3$ oriented edges. We may assume they are all oriented in the same direction, say clockwise.  Fix one oriented edge $e_0$  and let $X_0$ be the set of hairs and half-edges between $e_0^-$ and $e_0^+$. Note that $X_0$ cannot consist only of hairs, since then $v$ would be a planar cut vertex.  Using Lemma~\ref{lemma:TheMove}, we see that $\G$ is homologous to a sum of hairy graphs $\G'$, each with only one half-edge  between $e_0^-$ and $e_0^+$; we label this half-edge $e_1^-$.

For each $\G'$, let $X_1$ be the set of hairs and half-edges between $e_1^-$ and $e_1^+$ other than $e_0^+$.  Again note that $X_1$ cannot consist solely of hairs, since then $v$ would be a planar cut vertex.  Applying Lemma~\ref{lemma:TheMove} again, we see that $\G'$ is homologous to a sum of graphs with only $e_0^+$ and one other half-edge, which we label $e_2^-$, between  $e_1^-$ and $e_1^+$.

We continue in this fashion until we run out of oriented edges. The last oriented edge $e_k$ may have a single hair between $e_{k-1}^+$ and $e_k^+$ and/or another after $e_k^+$.   Thus  our original graph $\G$ is homologous to a sum of graphs $\G'$, each with oriented edges   $e_0,\ldots,e_k$; the cyclic ordering on the half-edges    is $e^-_0e^1_-e_0^+e_2^-e_1^+\ldots e_{k}^-e_{k-1}^+(h_1)e_k^+(h_2),$ where the $h_i$ are possible hairs (See Figure~\ref{standard} for the case with no hairs.)

\begin{figure}
\ifpdf
\includegraphics[width=2in]{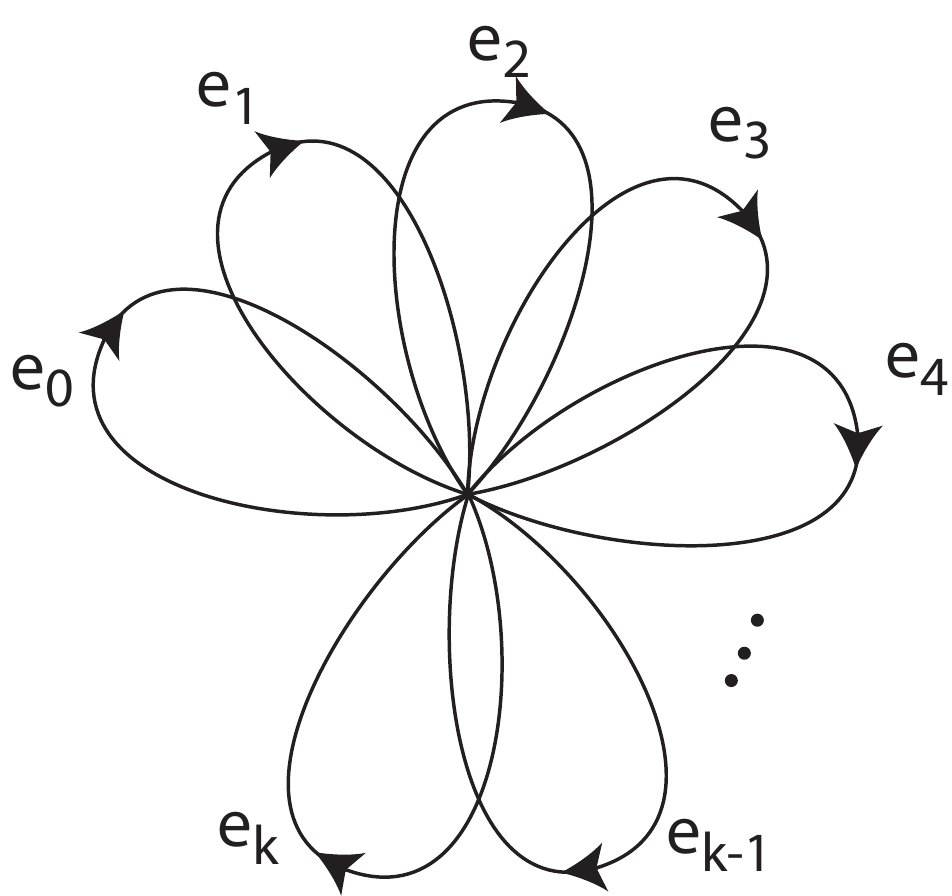}
\fi
\caption{Standard form for generator of $C_1\hairy$.}\label{standard}
\end{figure}

Using Lemma~\ref{lemma:trislide}, in each $\G'$ we may now slide $e_0^+$ successively over $e_2, e_4, \ldots, e_{2[k/2]}$ to obtain a homologous graph $\G''$.  If $k$ is even, then in $\G''$  the two half-edges of $e_0$ are adjacent (possibly with a hair between), i.e. the vertex of $\G''$ is a planar cut vertex so $\G''$ is a boundary.  If $k+1$ is odd, then (after sliding the one possible hair around) $\G''$ is isomorphic to $\G'$ with the orientation on  $e_k$ reversed , i.e. $\G'$ is homologous to  $-\G'$ so is zero.

The non-trivial classes which are represented by loops with $2$ hairs form a copy of $\Wedge^2 V$. The antisymmetry comes from the involution which rotates the $4$-valent vertex to exchange the hairs, switching
the edge orientation in the process.  The classes represented by tripods correspond naturally to   $ V^{\otimes 3} $ modulo the cyclic action of  $\Z_3$.  The graph with one loop and one hair gives a copy of $V$. Thus we have shown $H_1(\hairy)\cong (V^{\otimes 3})_{\Z_3}\oplus V\oplus \Wedge^2 V$, with the first two summands in degree $1$ and the second in degree $2$.
\end{proof}

 \begin{remark} Let $\mathcal H_{(g,s)}$ be the part of the hairy graph complex spanned by hairy graphs that thicken to a surface of genus $g$ with $s\geq 1$ boundary components. Let $\hairy^0_{(g,s)}\subset \mathcal H_{(g,s)}$ be the subcomplex of graphs without hairs.   This is the  standard  ribbon graph complex which computes the  cohomology of $\Mod(g,s)$; in particular, $H_1(\mathcal H_{(g,s)}^0)\cong H^{4g+2s-5}(\Mod(g,s);\F)$ (see \cite{CV}, Section 4). Since graphs in $\hairy^0_{(g,s)}$ have degree $d=4g+2s-4$,  Theorem~\ref{thm:hairyassoc} gives a purely graph homological proof that $H^{*}(\Mod(g,s);\F)$ vanishes in its vcd for $s
\geq 1,2g+s-2> 1.$ In fact, for this same range, one can show using the techniques of section ~\ref{sec:lieoperad} that $H_1(\mathcal H_{(g,s)})\cong H^{4g+2s-5}(\Mod(g,s);(\F\oplus V)^{\otimes s})$, where the $\Mod(g,s)$ action on $(\F\oplus V)^{\otimes s}$ factors through the epimorphism onto the symmetric group $\Mod(g,s)\twoheadrightarrow \Sigma_s$. Theorem~\ref{thm:hairyassoc} therefore also implies the vanishing of the cohomology of $\Mod(g,s)$ in its vcd with coefficients in any   $\Sigma_s$-module.
  \end{remark}

\subsection{The abelianization of $\lplus$}In~\cite{mss}, Morita, Sakasai and Suzuki compute  the   abelianization of $\lplus$.  In this section we point out how this follows from the computation of $H_1(\hairy)$ and injectivity of the trace map.
\begin{theorem}\cite{mss} \label{thm:associative}
For $\cO=\mathcal Assoc$,
$$
\lplus^{\text{ab}}\cong [V^{\otimes 3}]_{\Z_3}\oplus \left(\ext^2 V\right)/\F(\omega_0)
=\SpF{3}V \oplus \SpF{1,1,1}V \oplus \SpF{1,1}V\oplus \SpF{1}V,
$$
where  $\omega_0=\sum_ip_i\wedge q_i$.
\end{theorem}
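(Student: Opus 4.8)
The plan is to deduce the abelianization $\lplus^{\text{ab}}$ from the computation of $H_1(\hairy)$ in Theorem~\ref{thm:hairyassoc} together with the injectivity of the trace map (Theorem~\ref{th:Tr_injection}) and its near-surjectivity (Theorem~\ref{thm:surjective}). The point is that $H_1(\lplus_\infty) = \lplus^{\text{ab}}$, so by Theorem~\ref{th:Tr_injection} the map $\Tr_*$ embeds $\lplus^{\text{ab}}$ into $H_1(\hairy_\infty)$, which by Theorem~\ref{thm:hairyassoc} is $\SF{(3)}V_\infty \oplus \SF{(1,1,1)}V_\infty \oplus V_\infty$ in degree $1$ and $\SF{(1,1)}V_\infty$ in degree $2$, with nothing in higher degree. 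So the first task is to observe that $\lplus^{\text{ab}}$ automatically vanishes in degree $d \geq 3$, since it injects into $H_1(\hairy_\infty)\degree{d} = 0$. This immediately handles the ``vanishes for all higher degrees'' part without any further work.

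Next I would pin down the image of $\Tr_*$ in degrees $1$ and $2$. In degree $1$, the relevant computation is essentially the one already carried out for the commutative operad but refined for $\Assoc$: a basic degree-$1$ associative spider is a planar tripod, and its trace is the tripod itself plus correction terms given by loops with one hair (coming from matching two of its three legs). Tracking which $\GL$-summands of $\cL\Assoc_V\degree{1} \cong \SF{(3)}V \oplus \SF{(1,1,1)}V$ survive modulo the image of the Chevalley--Eilenberg differential, one sees that $\Tr_*$ is injective on the whole of this space (one can use Theorem~\ref{thm:surjective}, which says the image of $\Tr_*$ surjects onto $H_1(\hairy^+_\infty)$, to conclude the image is as large as possible, hence all of $\SF{(3)}V \oplus \SF{(1,1,1)}V$ in degree $1$); the extra $V$ summand of $H_1(\hairy)\degree{1}$ — the one-loop one-hair graph — is \emph{not} in the image because, on the $\SP$-invariant side, this $V$ would have to come from a one-loop one-hair graph which, when pulled back through $\beta$ and $\Tr$, already appears as a correction term attached to the tripod rather than as an independent class. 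So in degree $1$ the image is exactly $[V^{\otimes 3}]_{\Z_3} \cong \SpF{3}V \oplus \SpF{1,1,1}V$.

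In degree $2$, by Theorem~\ref{thm:hairyassoc} the target $H_1(\hairy_\infty)\degree{2}$ is just $\SF{(1,1)}V_\infty$, generated by the loop with two hairs on opposite sides. I would argue that this class \emph{is} hit by $\Tr_*$: one exhibits a degree-$2$ element of $\lplus_V$ (a sum of two associative spiders of degree $1$) whose trace, after matching legs to form an edge, produces the two-hair loop with a nonzero coefficient, with all other matching terms either vanishing or being boundaries. The class $\omega_0 = \sum_i p_i \wedge q_i$ must be quotiented out: the matching that closes \emph{both} hairs of the two-hair loop produces a genus-$1$ closed graph, which is zero by the orientation-reversing automorphism argument used in the proof of Theorem~\ref{thm:hairyassoc}, and this is exactly the relation that kills $\omega_0$ inside $\ext^2 V$. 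Hence the degree-$2$ part of $\lplus^{\text{ab}}$ is $(\ext^2 V)/\F(\omega_0) \cong \SpF{1,1}V$. Assembling the three pieces gives the claimed formula.

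The main obstacle is the degree-$1$ bookkeeping: precisely identifying the image of $\Tr_*$ as a $\GL$- or $\SP$-submodule and verifying that the ``extra'' $V$ summand of $H_1(\hairy)\degree{1}$ is genuinely outside the image while the $\SF{(3)} \oplus \SF{(1,1,1)}$ part is inside. This requires carefully carrying out the trace of a tripod, including all the lower-valence correction terms, and comparing with the explicit description of $H_1(\hairy)$ from Theorem~\ref{thm:hairyassoc} — much as Morita, Sakasai and Suzuki do in \cite{mss}, but now routed through the trace map, which (as the authors emphasize) makes the vanishing in higher degrees nearly automatic and reduces the whole problem to these two low-degree computations.
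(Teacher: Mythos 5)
Your overall skeleton coincides with the paper's: injectivity of $\Tr_*$ (Theorem~\ref{th:Tr_injection}) plus the computation $H_1(\hairy)\degree{d}=0$ for $d>2$ (Theorem~\ref{thm:hairyassoc}) kills all degrees $d\geq 3$, and the problem reduces to degrees $1$ and $2$. But both of your low-degree arguments have genuine gaps. In degree $1$ you try to pin down the image of $\Tr_*$ as the $\GL$-submodule $\SF{(3)}V\oplus\SF{(1,1,1)}V$ of $H_1(\hairy)\degree{1}$, ``excluding'' the $V$ summand. This cannot work as stated: $\Tr$ uses the symplectic form, so it is only $\SP$-equivariant, and its image is not a $\GL$-submodule --- the trace of a tripod whose labels contain a hyperbolic pair has a nonzero component along the one-loop-one-hair ($V$) summand, so ``the $V$ summand is not in the image'' is neither a correct statement about components nor a proof; and Theorem~\ref{thm:surjective} (surjectivity onto $H_*(\hairy^+_\infty)$) does not identify the precise degree-$1$ image either. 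The paper avoids all of this with a one-line observation you never make: a bracket of two positive-degree spiders has at least four legs, so no degree-$1$ element lies in $[\lplus,\lplus]$, and hence the degree-$1$ part of $\lplus^{\text{ab}}$ is all of $\cL\Assoc_V\degree{1}\cong[V^{\otimes 3}]_{\Z_3}$; no trace computation is needed in degree $1$.

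In degree $2$ there are two problems. First, ``a sum of two associative spiders of degree $1$'' is not a degree-$2$ element of $\lplus_V$: the element you need is a single $4$-legged spider, e.g.\ with cyclic labels $a,p_N,b,q_N$ where $\omega(p_N,q_N)=1$ is the only nonzero pairing; its trace in homology is exactly the two-hair loop representing $a\wedge b$, since the no-edge term has a planar cut vertex and is null-homologous. This gives $\im\Tr_*\supseteq\ker(\omega)\cong\ext^2V/\F(\omega_0)$. Second, and more seriously, you give no argument for the opposite bound, i.e.\ that the image is no larger (equivalently, that a class mapping to $\omega_0$ is not hit). Your explanation --- that the matching closing both hairs of the two-hair loop yields a genus-$1$ graph which vanishes --- concerns a different term of the trace; its vanishing says nothing about which linear combinations of two-hair loop classes are realized, so it does not exclude $\omega_0$. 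The paper treats this as a separate verification (``by hand or by appealing to Morita's calculation of the degree $2$ piece''); your proposal needs such an argument, for instance computing, with the orientation conventions fixed, the induced linear map sending a spider with cyclic labels $(x_1,x_2,x_3,x_4)$ to $\pm\omega(x_1,x_3)\,x_2\wedge x_4\pm\omega(x_2,x_4)\,x_1\wedge x_3$ in $\ext^2V\cong H_1(\hairy)\degree{2}$ and checking its image is the hyperplane $\ker(\omega)$, or else quoting the degree-$2$ computation of \cite{mss}.
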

\begin{proof}
The map $\Tr_*$ is injective, so to determine $\lplus^{\text{ab}}$ it suffices to calculate the image  $\Tr_*(\lplus^{\text{ab}})$ in $H_1(\hairy)$.
The trace map preserves degree, so we do this for degree $1$ and $2$ separately.

The degree $1$ part of $\lplus$ consists of spiders with three legs, and all of these represent nontrivial elements of the abelianization, since everything in the image of the bracket has at least four legs. Thus the degree $1$ part of $\lplus_{\rm ab}$ is isomorphic to $[V^{\otimes 3}]_{\Z_3}$.

The degree $2$ part of $\lplus$ contains 4-legged spiders with labels $a,p_N,b,q_N$  arranged cyclically, where $\omega(p_N,q_N)=1$  is the only non-zero pairing. On the level of homology, the trace of  such a  spider is equal to the loop with two hairs (representing $a\wedge b$), since the other term (the spider by itself) is null-homologous. Thus the entire kernel of  $\omega\colon\ext^2V\to \F$ is in the image of $\Tr_*$, and we have
$$
\im(\Tr_*)\supset \ker(\omega)\cong \ext^2V/\F(\omega_0).
$$
The fact that the image cannot be any larger can be argued by hand  or by appealing to Morita's calculation of the degree $2$ piece.
\end{proof}

In \cite{mss} Morita, Sakasai and Suzuki  combined their results with Kontsevich's theorem to prove that the cohomology of mapping class groups vanishes in their virtual cohomological dimension.

\section{$H_1(\hairy)$ for the Lie operad}\label{sec:lieoperad}

The Lie operad has $\Lie((n))$ spanned by planar uni-trivalent trees with $n$ leaves distinctly labeled by $\{0,\ldots, n-1\}$, modulo the Jacobi identity (IHX relation) and antisymmetry relations. Composition is induced by joining two trees at univalent vertices.

\subsection{The Lie Lie algebra} We can represent a basic Lie spider by drawing a planar unitrivalent tree  and labeling its leaves with basis elements $v\in \cB$.   Two Lie spiders are fused by joining a leg of the first spider to a leg of the second and multiplying the result by the symplectic product of the associated leg labels. In terms of Schur functors, for   small values of $d$ we have

\begin{itemize}
\item $\cL\Lie_V\degree{0}\cong \SF{(2)}V$
\item $\cL\Lie_V\degree{1}\cong \SF{(1,1,1)}V$
\item $\cL\Lie_V\degree{2}\cong \SF{(2,2)}V$
\item $\cL\Lie_V\degree{3}\cong \SF{(3,1,1)}V$
\item $\cL\Lie_V\degree{4}\cong \SF{(4,2)}V\oplus\SF{(3,1,1,1)}V\oplus \SF{(2,2,2)}V$
\end{itemize}

\subsection{Hairy Lie graph homology in dimension 1}   A hairy Lie graph is   represented by an ordered disjoint union of Lie spiders, with some leaves  unlabeled and joined by oriented edges. See Figure~\ref{Liefig}.

In \cite{CV} Section 3.1, it was shown that the Lie graph complex is isomorphic to the ``forested graph complex" which has significantly simpler orientation data. In the presence of hairs, this isomorphism does not quite go through, but one can still simplify the description of a hairy Lie graph slightly.   In the hairy Lie graph of Figure~\ref{Liefig}, this can be done  by removing the grey ovals and noticing that they could be recovered as a neighborhood of the subgraph spanned by all vertices and unoriented edges.
Thus, a hairy Lie graph  may be represented by a uni-trivalent graph  whose univalent vertices are labeled by vectors in $v$ and some of whose internal edges are oriented,  with the property that  the subgraph $\G_u$ spanned by the unoriented edges is a forest containing all of the vertices of $\G$.  Orientation data consists of ordering the components of the forest, and specifying a cyclic ordering of the half edges incident to each trivalent vertex.
The central edge in an IHX relation must be an unoriented edge.

\begin{figure}
\begin{center}
\ifpdf\includegraphics[width=2.5in]{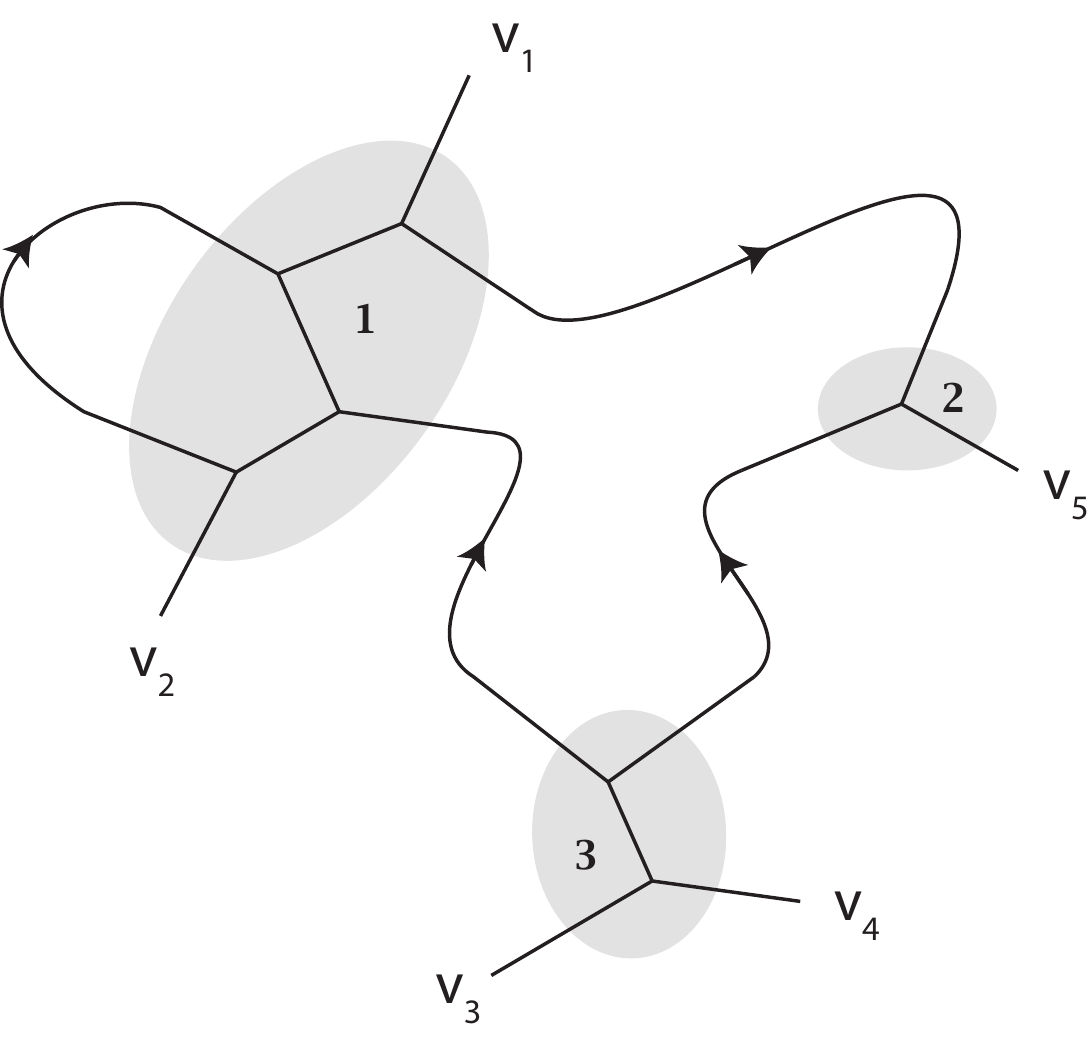}\fi
\caption{A hairy Lie graph with rank $2$ and degree $4+1+2=7$ }\label{Liefig}
\end{center}
\end{figure}

The 1-chains $C_1\hairy$ are  generated by hairy graphs $\G$  such that  the subgraph $\G_u$ spanned by unoriented edges is a (maximal) tree.

The first homology of $\hairy$ is the quotient of  $C_1\hairy$  by the image of the boundary operator $\bdry_\hairy\colon C_2\hairy \to C_1\hairy.$
As in the associative case, we begin with some observations about the image of $\bdry_\hairy$.

\begin{lemma}\label{separatingLie}  Let $\G$ be a Lie graph in $C_1\hairy$.  If some  unoriented edge separates $\G$ into two non-trivial components, then $\G$ is in the image of $\bdry_\hairy$.

\end{lemma}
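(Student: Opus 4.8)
The plan is to mimic the proof of Lemma~\ref{lemma:cut} (the associative version): the key idea is that an unoriented separating edge allows us to realize $\G$ as the boundary of a two-vertex hairy graph. Concretely, suppose $\G \in C_1\hairy$ has an unoriented edge $a$ which separates the underlying uni-trivalent graph into two non-trivial pieces $\G_1$ and $\G_2$ (here "non-trivial" means each piece contains at least one trivalent vertex, or more to the point, each side is not merely a single hair). Recall that a generator of $C_1\hairy$ is a hairy Lie graph whose unoriented subgraph $\G_u$ is a single maximal tree; the separating edge $a$ must be an edge of $\G_u$, so removing $a$ from $\G$ breaks $\G_u$ into exactly two subtrees $T_1 \subset \G_1$ and $T_2 \subset \G_2$.

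First I would build the two-vertex graph $\G'$ that witnesses $\G = \bdry_\hairy(\G')$. Starting from $\G$, expand along the edge $a$: replace the maximal unoriented tree $\G_u$ by the two subtrees $T_1$ and $T_2$, contract each $T_i$ to a single internal vertex $v_i$ (so that $\G'$ literally has two internal vertices in the sense of the forested/spider description — recall a generator of $C_k\hairy$ has a $k$-component unoriented forest), and insert $a$ as the \emph{oriented} internal edge joining $v_1$ to $v_2$. The operad elements coloring $v_1$ and $v_2$ are exactly the sub-Lie-elements of the original spider cut at $a$; all hairs and all other oriented edges of $\G$ are inherited by $\G'$ with their labels unchanged. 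Orientation data for $\G'$ is obtained by ordering the two components $(T_1, T_2)$ and orienting $a$ from $v_1$ to $v_2$, consistent with the cyclic orderings already present at the trivalent vertices. Now $\bdry_\hairy(\G')$ is a sum over the internal edges of $\G'$; the only internal edge not already present in $\G$ (hence the only one contributing — no, rather: we must check the other terms), so I would verify that contracting $a$ recovers exactly $\G$ with coefficient $+1$ after accounting for the sign $(-1)^{i+j+1}$ with $\{i,j\}=\{1,2\}$, which gives $+1$; and that contracting any other oriented edge of $\G'$ yields a graph of a \emph{different} combinatorial type (one fewer oriented edge and a larger unoriented tree), so those terms do not interfere — but actually since $\G = \bdry_\hairy \G'$ is what we want, it is enough to note $\bdry_\hairy^2 = 0$ is irrelevant; we just need one term of the boundary to be $\G$, and the claim is that $\G = \bdry_\hairy(\G')$ as stated, so in fact \emph{all} terms of $\bdry_\hairy(\G')$ must be accounted for. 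The cleanest route: $\G'$ has the property that contracting $a$ gives $\G$, and I would simply assert (as in Figure~\ref{fig:sep} of the associative case) that up to homology this is what matters — but to get equality on the nose one chooses $\G'$ so that $a$ is the unique contractible configuration yielding a one-vertex graph, i.e. contracting the other internal (oriented) edges of $\G'$ produces graphs lying in $C_1 \hairy$ as well, and these are precisely the non-$\G$ terms; rather than claim equality, I would phrase the conclusion as: $\G$ appears with coefficient $+1$ in $\bdry_\hairy(\G')$ and, by choosing $\G'$ minimally, it is the only term, so $\G = \bdry_\hairy(\G')$ is in the image.

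The main obstacle I anticipate is the bookkeeping of orientations and signs: one must check that the cyclic orderings at the trivalent vertices of $T_1$ and $T_2$, together with the choice of which half-edge of $a$ is the "output" versus "input" (step (2) of the definition of $\bdry_\hairy$ via fusing operad elements), really do reassemble into the original Lie spider coloring with coefficient $+1$ and the correct graph orientation. This is exactly the kind of sign-chase the authors handled in Lemma~\ref{lemma:cut}, and here the extra subtlety is that the central edge in an IHX relation must be unoriented while $a$ has been made oriented in $\G'$ — so I would note that $a$ is an \emph{internal} edge of $\G'$ of the type that gets contracted, not one appearing as the central edge of an IHX relation, so there is no conflict with the relations imposed on $\hairy$. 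Modulo this verification, the lemma follows immediately, and the figure analogous to Figure~\ref{fig:sep} makes the construction transparent.
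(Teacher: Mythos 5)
Your construction is the same as the paper's: orient the separating unoriented edge $a$, so that the unoriented subgraph breaks into the two subtrees $T_1,T_2$ and you obtain a generator $\G'$ of $C_2\hairy$ with $\G'_a=\pm\G$. But the heart of the lemma is the exact equality $\bdry_\hairy(\G')=\pm\G$, and here you have a genuine gap: you never use the hypothesis that $a$ \emph{separates} $\G$. Its entire role is to kill every other term of the boundary. Since each oriented edge of $\G$ other than (the now-oriented) $a$ has both of its endpoints in a single component of $\G$ cut along $a$, in $\G'$ each such edge runs from one of the two internal vertices back to itself; and by the definition of $\bdry_\hairy$, contracting an edge whose two ends lie on the same vertex contributes $0$ (the $i=j$ case). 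Hence $\bdry_\hairy(\G')=\G'_a=\pm\G$, and since the image of $\bdry_\hairy$ is a subspace the sign $\pm$ is harmless, so no sign chase, no IHX discussion, and no appeal to a ``minimal choice of $\G'$'' is needed.

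Your substitute reasoning for the other terms is not only unjustified but, as stated, would be fatal: you assert that contracting the other oriented edges of $\G'$ produces nonzero one-vertex graphs of a ``different combinatorial type'' which ``do not interfere.'' If those terms really were nonzero elements of $C_1\hairy$, then $\bdry_\hairy(\G')=\G+X$ with $X\neq 0$ would only show that $\G$ is \emph{homologous} to $-X$, not that $\G$ lies in the image of $\bdry_\hairy$, which is what the lemma claims (and what its later applications, e.g.\ Lemma~\ref{order} and Proposition~\ref{thm:rankone}, use). The correct observation is that those terms are identically zero by the $i=j$ convention in the definition of the boundary operator, and this is exactly where the separation hypothesis enters; your hedging (``by choosing $\G'$ minimally, it is the only term'') papers over the one point that actually requires the hypothesis.
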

\begin{proof}   Suppose an edge $e\in \G_u$ separates $\G$ into two non-trivial components, i.e. components which are not a single vertex.    In the Lie graph $\bf H$ obtained by orienting $e$, the unoriented subgraph ${\bf H}_u$ has two components,   and all oriented edges other than $e$ have both ends in one or the other of these components.  Thus, all terms of  $\bdry_\hairy(\bf H)$   are zero except for ${\bf H}_e=\pm\G$.
\end{proof}

In particular,  if  there is an unoriented tree attached to the rest of $\G$ at a single vertex, then $\G$ is in the image of $\bdry_\hairy$ (see Figure~\ref{HangingTree}).

\begin{figure}
$$
\bdry_\hairy\Big(\begin{minipage}{1.0in}
\ifpdf\includegraphics[width=.8in]{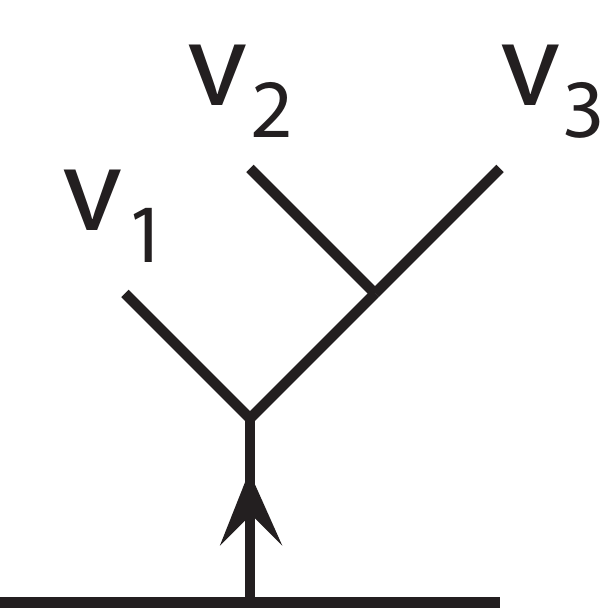}\fi
\end{minipage}\Big)=\begin{minipage}{1.0in}
\ifpdf\includegraphics[width=1.0in]{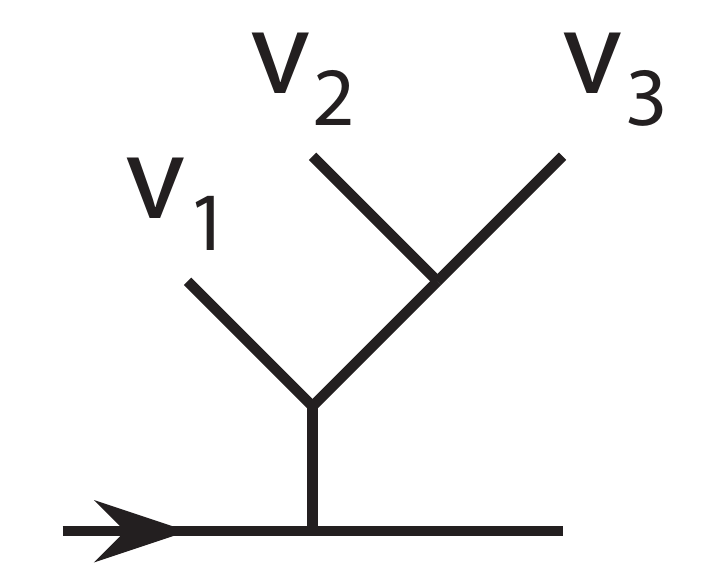}\fi
\end{minipage}
$$
\caption{Attached unoriented tree}\label{HangingTree}
 \end{figure}
Thus, a generator of  $C_1\hairy$  may be thought of as a connected graph $G$ with orientations on the edges in the complement of some maximal tree $T\subset G$,  and with single edges (called \emph{hairs}) attached to some of the edges of $G$.

\begin{lemma}\label{order}
If $\G$ is a Lie graph in $C_1\hairy$, then
hairs  attached to the same unoriented edge of $\G$ may be permuted modulo $\im\bdry_\hairy$.
\end{lemma}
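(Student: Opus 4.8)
The plan is to show that two hairs $h_1,h_2$ attached to a single unoriented edge $e$ of $\G$ can be interchanged up to a boundary, by inserting an auxiliary bivalent-free vertex along $e$ and orienting the resulting new edge. Concretely, starting from $\G$ with hairs $h_1,h_2$ attached to $e$ at a trivalent vertex $w$ (or to a subdivided point of $e$, which amounts to the same thing once we remember there are no bivalent vertices), I would build a rank-$2$ chain $\G'\in C_2\hairy$: split the edge $e$ into two pieces joined by a new \emph{oriented} edge $f$, placing the attaching point of $h_1$ on one side of $f$ and the attaching point of $h_2$ on the other side. The unoriented subgraph of $\G'$ then has two tree components (the two halves of $e$ together with whatever tree structure was already there), so $\G'$ is a legitimate generator of $C_2\hairy$.

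Next I would compute $\bdry_\hairy(\G')$. Because $f$ is the unique oriented edge whose two half-edges lie in \emph{different} components of the unoriented subgraph of $\G'$, every term $\G'_{e''}$ in $\bdry_\hairy(\G')$ with $e''\neq f$ either vanishes (when $e''$ connects a component to itself in a way forbidden by the orientation-ordering sign, exactly as in Lemma~\ref{separatingLie}) or produces a graph in which $h_1$ and $h_2$ are attached to \emph{adjacent} positions along a single unoriented edge of a graph that now has a separating unoriented edge — hence is itself a boundary by Lemma~\ref{separatingLie}. The surviving term $\G'_f$ is $\pm\G$. Running the same construction but with the roles of the two sides of $f$ swapped (equivalently, reversing the orientation of $f$, which changes the sign) yields $\mp\G^{\text{swap}}$, where $\G^{\text{swap}}$ is $\G$ with $h_1$ and $h_2$ interchanged. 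Comparing the two boundary computations gives $\G\equiv \pm\G^{\text{swap}}$ modulo $\im\bdry_\hairy$, and iterating over adjacent transpositions shows that the full symmetric group on the hairs attached to $e$ acts through boundaries, which is the assertion.

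I would organize the write-up as: (1) reduce to the case of two hairs and to the case where they are attached at consecutive positions of $e$ by an elementary ``hair slide'' argument analogous to Lemma~\ref{lemma:hairslide} (sliding a hair along an unoriented edge past another hair costs nothing because the intermediate two-vertex graph has a separating unoriented edge); (2) construct $\G'$ and identify it as a generator of $C_2\hairy$; (3) expand $\bdry_\hairy(\G')$ and use Lemma~\ref{separatingLie} to kill all but the $f$-term; (4) repeat with reversed orientation on $f$ and compare signs; (5) conclude by induction on the number of hairs on $e$.

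The main obstacle I anticipate is bookkeeping the orientation signs: one must track the ordering of the two unoriented-tree components of $\G'$ together with the half-edge data at the new trivalent vertices where $h_1,h_2,f$ and the pieces of $e$ meet, and verify that reversing the orientation of $f$ genuinely produces the transposed graph with the expected sign rather than some other relation. This is exactly the kind of sign computation that the sign conventions in the definition of $\bdry_\hairy$ (items (1)–(4)) were set up to make work, so I expect it to go through, but it is the step that needs care; everything else is a direct application of Lemma~\ref{separatingLie} and the no-bivalent-vertices conventions.
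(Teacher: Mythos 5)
Your chain $\G'$ is a legitimate element of $C_2\hairy$, but the boundary computation at the heart of your argument does not do what you claim, for two separate reasons. First, consider an original oriented edge $e''$ of $\G$ whose endpoints land in the two different components $T_1,T_2$ of the unoriented forest of $\G'$; such an edge exists precisely when the unoriented edge you cut is non-separating, which is the only interesting case (otherwise $\G$ is already a boundary by Lemma~\ref{separatingLie}). Contracting $e''$ does not give a graph with a separating unoriented edge: it gives the graph obtained from $\G$ by exchanging the roles of the two edges, i.e.\ the cut edge becomes the oriented edge $f$ while $e''$ becomes an unoriented tree edge. These ``change of maximal tree'' terms are in general not boundaries --- in the cube-complex picture of Theorem~\ref{thm:hairylie} they are exactly the relations coming from $\delta_E$, so if Lemma~\ref{separatingLie} killed them the homology would collapse. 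Second, reversing the orientation of $f$ does not produce the hair-swapped graph: operadic contraction of $f$ fuses the same two Lie trees at the same slots regardless of the direction of $f$, so the reversed chain is just $-\G'$, and its boundary contains $\mp\G$, not $\mp\G^{\mathrm{swap}}$. Thus your computation at best yields a relation between $\G$ and tree-swap graphs; the transposed graph $\G^{\mathrm{swap}}$ never appears. Step (1) is also circular: ``sliding a hair along an unoriented edge past another hair'' is exactly the transposition the lemma asserts, and the hair-slide lemmas (Lemma~\ref{lemma:hairslide}, Lemma~\ref{slideLie}) only move hairs across \emph{oriented} edges.

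The missing ingredient is the operad relation, which your argument never invokes. Transposing two hairs attached at the two ends of an unoriented Lie edge is an IHX phenomenon: apply the IHX relation at that edge. Two of the three terms are, up to the signs built into the relation, $\G$ and the graph with the two hairs transposed, while the third term has the two hairs sitting on a tripod joined to the rest of the graph by a separating unoriented edge, hence lies in $\im\bdry_\hairy$ by Lemma~\ref{separatingLie}. This combination of IHX with Lemma~\ref{separatingLie} is the paper's (one-line) proof; adjacent transpositions then generate all permutations of the hairs on the edge.
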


\begin{proof}
This is a consequence of Lemma~\ref{separatingLie} and the   IHX relation
\begin{center}
\ifpdf\includegraphics[width=3.5in]{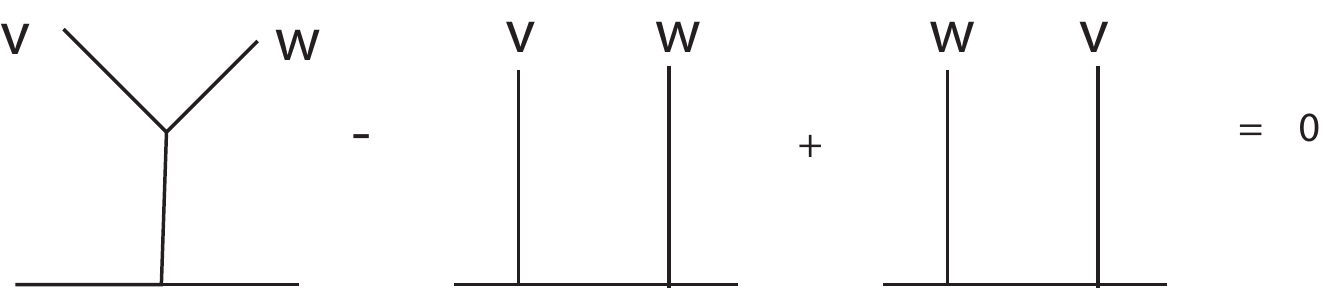}\fi
\end{center}.
\end{proof}

\begin{lemma}\label{slideLie}  If $\G$ is a Lie graph in $C_1\hairy$, then the hairy Lie graph obtained by moving a hair to the other end of
an oriented edge is equal to $\G$ modulo $\im\bdry_\hairy$.
\end{lemma}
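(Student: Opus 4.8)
The plan is to prove Lemma~\ref{slideLie} the same way its associative analogue, Lemma~\ref{lemma:hairslide}, was proved: to realize the hair-slide across the oriented edge $e$ as (plus or minus) the boundary of an explicit $2$-vertex chain. Write $\G\in C_1\hairy$, so the unoriented subgraph $\G_u$ is a spanning tree; let $e$ be the oriented edge over which the hair $h$ is slid, and let $\G''$ denote the result of the slide. The key object is a hairy Lie graph $\G'\in C_2\hairy$ recording the hair ``caught in the act of crossing $e$'': its two internal ($\cO$-colored) vertices --- equivalently, its unoriented forest has two tree components $T_1$ and $T_2$ --- are joined by exactly two oriented edges, one being $e$ itself and the other a parallel edge $e'$ produced by the slide, with the moving hair $h$ attached on the arc between them; away from this local picture $\G'$ agrees with $\G$.

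The steps I would carry out are: (1) put $\G$ into a standard local form near $e$ and $h$; (2) define $\G'$ as above and check it is a bona fide generator of $C_2\hairy$, i.e. that its unoriented edges really do form a two-component spanning forest and that the forested orientation data (an ordering of $T_1,T_2$ together with the cyclic orders at the trivalent vertices) is consistent; (3) compute $\bdry_\hairy(\G')$, using that $\bdry_\hairy$ merges only \emph{distinct} internal vertices --- the term $\G'_f$ vanishes whenever both ends of $f$ lie on a single internal vertex --- so the only contributions come from oriented edges joining $T_1$ to $T_2$. By construction these are $e$ and $e'$: collapsing $e'$ returns $\G$, while collapsing $e$ merges $T_1$ and $T_2$ along the other slot and returns $\G''$. (Should the chosen local model force an additional oriented edge between $T_1$ and $T_2$, the resulting term has a separating unoriented edge and is already a boundary by Lemma~\ref{separatingLie}, hence can be discarded.) Finally (4), tracking signs, one verifies the two surviving terms occur with opposite sign, so $\bdry_\hairy(\G')=\pm(\G-\G'')$ and therefore $\G\equiv\G''\pmod{\im\bdry_\hairy}$, as claimed.

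The main obstacle is step (4): the sign computation in the forested-graph orientation convention. One must fix a representative orientation on $\G'$, determine how collapsing $e$ versus collapsing $e'$ re-orders the single resulting tree component and re-cyclically-orders the merged trivalent vertex, and confirm that the outcomes are $+\G$ and $-\G''$ rather than $+\G$ and $+\G''$; the $(-1)^{i+j+1}$ in the definition of $\bdry_\hairy$ must be translated into this language. A lesser but still necessary point is step (2) together with the ``no other connecting oriented edges'' clause in step (3); both are routine once the relevant picture --- the obvious Lie-graph modification of Figure~\ref{fig:assoc} --- is drawn.
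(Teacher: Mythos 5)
Your construction is exactly the paper's proof: the paper establishes Lemma~\ref{slideLie} by exhibiting precisely this two-vertex chain (a tripod carrying the hair, joined to the rest of the graph by two oriented edges) and observing that its boundary is the difference of $\G$ and the slid graph, just as in the associative analogue Lemma~\ref{lemma:hairslide}. The sign verification you flag as the remaining obstacle is not carried out in the paper either (it is absorbed into the displayed figure, which shows the two boundary terms occurring with opposite sign), so your outline matches the published argument in both structure and level of detail.
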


\begin{proof}Notice that
$$
\bdry_\hairy\Big(\begin{minipage}{1.0in}
\ifpdf\includegraphics[width=1.0in]{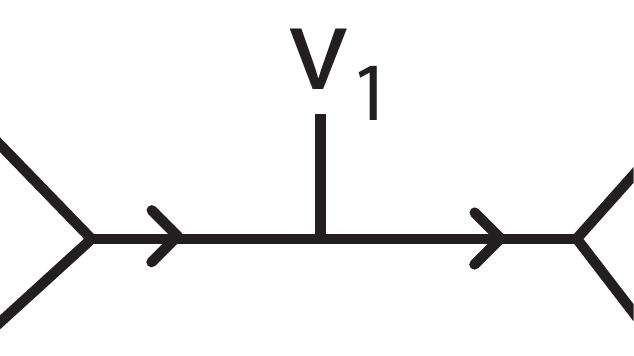}\fi
\end{minipage}\Big)=\begin{minipage}{1.0in}
\ifpdf\includegraphics[width=1.0in]{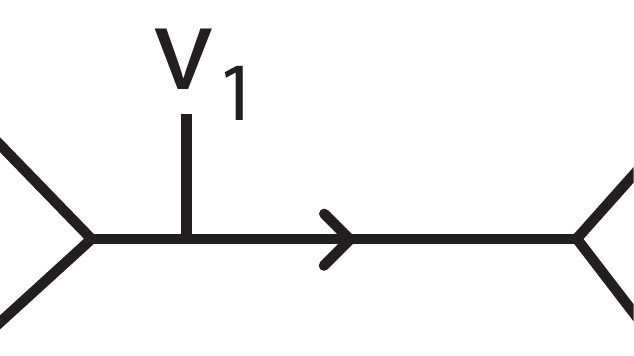}\fi
\end{minipage}
-
\begin{minipage}{1.0in}
\ifpdf\includegraphics[width=1.0in]{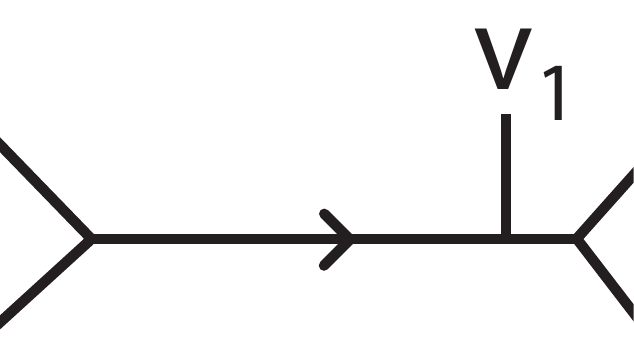}\fi
\end{minipage}
$$

\end{proof}

Recall that the \emph{rank} of a hairy graph is its first Betti number.  Since the boundary operator $\bdry_\hairy$ preserves rank, the  chains $C_k\mathcal \hairy$  decompose  into subcomplexes
$$C_k\mathcal \hairy=\bigoplus_r C_{k,r}  \hairy,$$
where $C_{k,r}\hairy$ is spanned by connected hairy graphs of rank $r$.  On the level of homology this gives
 $$\displaystyle H_k(\hairy) =  \bigoplus_{r\geq 0} H_{k,r}(\hairy),$$
where  $H_{k,r}(\hairy)= H_{k}(C_{*,r}\hairy)$.  The next three propositions give elementary calculations of $H_{1,r}$ for $r\leq 2$.  In the following sections we identify   $H_{1,r}$ for all $r\geq 2$ with a certain twisted cohomology of $\Out(F_r)$ and then calculate this twisted homology for $r=2$ in terms of modular forms.

\begin{proposition}\label{thm:rankzero}
For $\cO=\Lie$ the rank zero part of $H_1(\hairy)$ is
$H_{1,0}(\hairy)\cong \Wedge^3 V=\SF{(1,1,1)}V$.
\end{proposition}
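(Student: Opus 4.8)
The plan is to identify $C_{*,0}\hairy$ explicitly in low chain degree and compute directly. A rank-zero hairy Lie graph is a forest of Lie spiders together with oriented edges whose complement contains a spanning tree; since the rank is zero, there are in fact no oriented edges at all once we pass to the forested-graph description, so a rank-zero hairy graph is simply a uni-trivalent \emph{tree} all of whose leaves are labeled by vectors in $V$, with orientation data given by an ordering of the connected components together with cyclic orderings at trivalent vertices. For the purposes of computing $H_{1,0}$ we only need chain degrees $k=1,2$, since $C_0\hairy=0$ and $H_{1,0}(\hairy)$ is the cokernel of $\bdry_\hairy\colon C_{2,0}\hairy\to C_{1,0}\hairy$.

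First I would analyze $C_{1,0}\hairy$: its generators are single Lie spiders of positive degree (so at least $3$ legs), all of whose legs are hairs. By Lemma~\ref{separatingLie}, any such spider whose underlying tree has an internal edge separating it into two non-trivial pieces is a boundary; every uni-trivalent tree with at least $4$ leaves has such an edge, so modulo $\im\bdry_\hairy$ only the tripod (the unique uni-trivalent tree with exactly $3$ leaves, necessarily of degree $1$) survives. Next I would check that the tripod is \emph{not} a boundary: it has degree $1$, and $\bdry_\hairy$ preserves both degree and rank, so any preimage would have to live in $C_{2,0}\hairy\degree{1}$, which is empty because a two-spider configuration of rank $0$ uses an oriented edge and hence has degree at least $2$. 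Therefore $H_{1,0}(\hairy)$ is spanned by labeled tripods.

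It remains to identify the span of labeled tripods as a $\GL(V)$-module. A tripod has three leaves, each labeled by an element of $V$, subject to the antisymmetry and IHX relations of $\Lie((3))$; but $\Lie((3))$ as a $\Sigma_3$-representation is the sign representation (the bracket $[x,[y,z]]$ together with Jacobi forces $[[x,y],z]$ and $[x,[z,y]]$ to be $\pm$ the same element, transforming by sign under permutation of the three decorating slots). Concretely the cyclic ordering at the single trivalent vertex contributes a sign under transposition of two leaves, and reversing it reverses the graph orientation. Hence the labeled tripods are $\Lie((3))\otimes_{\Sigma_3} V^{\otimes 3}=\mathrm{sgn}\otimes_{\Sigma_3}V^{\otimes 3}=\ext^3 V=\SF{(1,1,1)}V$. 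Putting these steps together gives $H_{1,0}(\hairy)\cong\ext^3 V=\SF{(1,1,1)}V$.

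I expect the only subtle point to be the bookkeeping of orientation data for the tripod — i.e. verifying carefully that the $\Sigma_3$-action on a labeled tripod is exactly the sign action rather than the trivial one, so that the module is $\ext^3 V$ and not $S^3 V$ or $V^{\otimes 3}/\Z_3$. This is where one must be careful to combine the cyclic-ordering convention at the trivalent vertex with the graph-orientation relation $(\G,or)=-(\G,-or)$; everything else is a short finiteness argument using that $\bdry_\hairy$ preserves degree and rank.
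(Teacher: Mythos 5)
Your argument is correct and follows essentially the same route as the paper's proof: trees with four or more leaves are boundaries via a separating internal edge (Lemma~\ref{separatingLie}), tripods cannot be boundaries since $\bdry_\hairy$ preserves degree and there are no two-spider rank-zero graphs of degree $1$, and the span of labeled tripods is $\Lie((3))\otimes_{\Sigma_3}V^{\otimes 3}\cong\ext^3 V$. Your extra care with the sign of the $\Sigma_3$-action is exactly the point the paper handles by sending the (ordered) labels to their wedge product, so nothing further is needed.
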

\begin{proof}
A rank $0$   Lie graph has no oriented edges, so  is a union of trees; since we are only looking at $C_1\hairy$ there is only one tree.  If this tree has more than $3$ leaves then it is in the image of $\bdry_\hairy$.  A tripod, on the other hand, cannot be  in the image of $\bdry_\hairy$, so $H_{1,0}(\hairy)$ is spanned by tripods.   If we choose an ordering for the labels of each tripod, the map sending the labels to their wedge product is an isomorphism $H_{1,0}\to \Wedge^3 V$.
\end{proof}

\begin{figure}
\begin{center}
\ifpdf\includegraphics[width=5in]{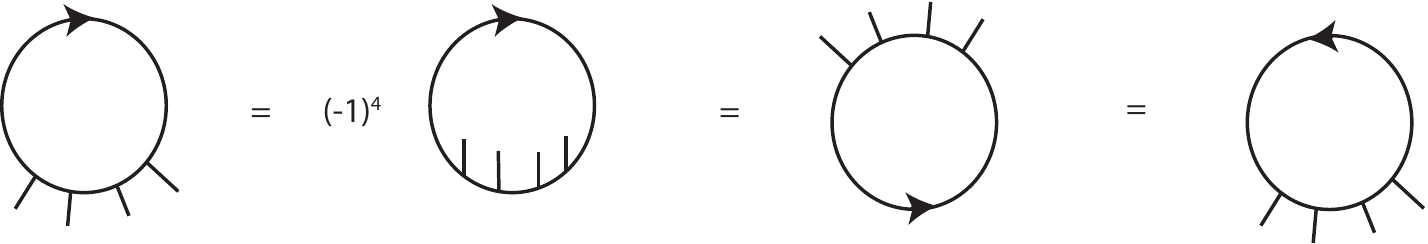}\fi
\end{center}
\caption{Orientation-reversing automorphism}\label{orient}
\end{figure}

\begin{proposition}\label{thm:rankone}
For $\cO=\mathcal Lie$ the rank one part of $H_1(\hairy)$ is
$$
H_{1,1}(\hairy)\cong \bigoplus_{k\geq 0}S^{2k+1}V=\bigoplus_{k\geq 0}\SF{(2k+1)}V.
$$
\end{proposition}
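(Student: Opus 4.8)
The plan is to show that a rank one generator of $C_1\hairy$ is homologous to a very restricted ``standard form'': a single oriented loop-edge (the unique edge outside a maximal tree) with some hairs attached to it and no branching in the unoriented tree beyond what the hairs demand. First I would argue, using Lemma~\ref{separatingLie} and its corollary about attached unoriented trees, that any rank one graph $\G$ reduces to one in which the maximal tree $T$ is just the minimal subtree needed to connect the hairs to the two endpoints of the single oriented edge $e$; in fact after repeatedly pruning hanging unoriented subtrees one is left with a ``theta-like'' picture of one oriented edge $e$ together with a path of unoriented edges joining $e^-$ to $e^+$, with hairs hanging off. Then I would use Lemma~\ref{slideLie} to slide all the hairs to a single end of $e$, and Lemma~\ref{order} (plus the IHX relation) to reorganize the resulting branching so that $\G$ is homologous to a ``lollipop'': a single oriented loop at a vertex $w$, with a trivalent tree sprouting from $w$ carrying $n$ hairs labeled by vectors $x_1,\dots,x_n\in V$. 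Since in a Lie graph the arrangement of hairs on such a tree is governed only by the IHX and antisymmetry relations, this lollipop is determined by the image of $x_1\otimes\cdots\otimes x_n$ in the Lie-operad space $\Lie((n+1))$, with one slot used up by the loop.

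Next I would determine which lollipops are nonzero and which are identified. The key computation is the orientation-reversing automorphism illustrated in Figure~\ref{orient}: rotating the loop reverses the edge-orientation, and tracking its effect on the cyclic orderings at the trivalent vertices shows that the lollipop with $n$ hairs is sent to $(-1)^{n+1}$ (or some such sign, to be pinned down by a careful parity count along the tree) times itself. Hence lollipops with an even number of hairs vanish, leaving only odd $n=2k+1$. For odd $n$, the remaining relations (IHX on the tree) together with the fact that the loop occupies one leg of a Lie-operad element mean that the surviving classes are exactly the totally symmetric part: the Lie bracket structure, once one leg is ``closed off'' into a loop and the antisymmetry from the loop-reversal is accounted for, collapses $\Lie((2k+2))$ down to the symmetric power $S^{2k+1}V$. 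Concretely I would exhibit the isomorphism $H_{1,1}(\hairy)\cong\bigoplus_{k\geq 0}S^{2k+1}V$ by sending the standard lollipop with hairs $x_1,\dots,x_{2k+1}$ to the product $x_1\cdots x_{2k+1}$, checking it is well-defined (kills IHX, respects the orientation relation) and surjective (every generator reduces to a lollipop), and injective (the symmetrization is detected by pairing with a generic symplectic form, or by comparing with the $\SP$-invariant computation).

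The main obstacle I expect is the bookkeeping in the reduction to standard form together with the sign in the orientation-reversing automorphism: one must be careful that sliding hairs across the oriented edge (Lemma~\ref{slideLie}) and permuting hairs on unoriented edges (Lemma~\ref{order}) really do suffice to reach the lollipop, and that no hidden relations among lollipops of the same degree are missed. The parity count showing that the loop-rotation acts by $(-1)^{n+1}$ is where it is easiest to make an error, since it combines the sign of cyclically permuting half-edges at each trivalent vertex along the tree with the sign of reversing the single oriented edge; getting this right is exactly what forces the answer to be $S^{2k+1}V$ rather than, say, $\ext^{\text{odd}}V$ or all of $V^{\otimes n}$. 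Once the sign is correct, identifying the quotient of $\Lie((2k+2))$ by the loop-relation with $S^{2k+1}V$ is a standard fact (it is the ``wheel'' or ``trace of a Lie word'' computation), and the claimed $\GL(V)$-module structure follows.
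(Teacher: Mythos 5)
There is a genuine gap, and it sits at the heart of your reduction. Your proposed ``standard form'' --- a lollipop consisting of the oriented loop with all $n$ hairs gathered onto a trivalent tree sprouting from a single vertex $w$ --- is null-homologous whenever $n\geq 2$: the unoriented edge attaching that tree to the loop separates the graph into two non-trivial pieces, so by Lemma~\ref{separatingLie} (see in particular the hanging-tree observation of Figure~\ref{HangingTree}) every such lollipop lies in $\im\bdry_\hairy$. If your reduction could actually be carried out, it would therefore prove $H_{1,1}(\hairy)\cong V$ rather than $\bigoplus_{k\geq 0}S^{2k+1}V$. The reduction cannot be carried out: Lemma~\ref{slideLie} only moves a hair across the oriented edge, Lemma~\ref{order} only permutes hairs already on a common unoriented edge, and the IHX moves you invoke to ``reorganize the branching'' do not gather hairs for free --- applying IHX to an unoriented loop edge expresses a loop with hairs attached at consecutive trivalent vertices as a branched (hence null-homologous) graph \emph{plus} another loop graph, not as a single lollipop. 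The situation is in fact the opposite of what you assume: graphs in which hairs are bunched into branches die, and the classes are carried precisely by loops with hairs attached directly, one per trivalent vertex of the loop. Consequently your subsequent identification of the answer with a quotient of $\Lie((n+1))$ (``one slot closed into a loop'') has no chain-level model in your setup, even though the target $S^{2k+1}V$ is the right answer.

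What is missing, and what the paper supplies, is an argument that the surviving loop-with-hairs generators carry exactly the symmetric powers and no more relations: one defines an explicit map $\phi\colon C_{1,1}\hairy\to\bigoplus_{k\geq 0}S^{2k+1}V$ sending a hairy loop to the product of its hair labels and every other generator to $0$, notes that loops with an even number of hairs vanish by the orientation-reversing automorphism (your parity concern is correct, and this is where only odd symmetric powers survive), observes that every non-loop generator is a boundary by Lemma~\ref{separatingLie}, and then checks directly that $\phi$ kills $\bdry_\hairy$ of each generator of $C_{2,1}\hairy$ by a two-case analysis (second oriented edge inside one tree, or joining the two trees, where the two resulting terms agree modulo boundaries with opposite signs). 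Your appeal to ``pairing with a generic symplectic form'' or ``the $\SP$-invariant computation'' for injectivity is not a substitute for this verification, since the whole point is to show that no combination of boundaries identifies distinct symmetric tensors. I recommend abandoning the lollipop normal form and either reproducing this chain-level argument or finding another way to verify that $\im\bdry_\hairy$ meets the span of hairy loops only in the even-hair and reordering relations.
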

\begin{proof}
Define a map $$\phi\colon C_{1,1}\hairy \to \bigoplus_{k\geq 0}S^{2k+1}V$$ by setting $\phi(\G)=0$ unless $\G$ is a single oriented loop with hairs attached.  For such $\G$, define $\phi(\G)$ to be the product of the labels on its hairs. Note that  the number of hairs must be odd, since otherwise $\G$ has an  orientation reversing automorphism (see Figure~\ref{orient}), giving  $\G=-\G,$ i.e. $\G=0$ in $\hairy$.

The map $\phi$ is clearly  surjective, and we claim it induces an isomorphism on homology, i.e. that $\ker(\phi)=\im(\bdry_\hairy)$. Any graph which is not a hairy loop is in $\im(\bdry_\hairy)$ by Lemma~\ref{separatingLie} and $\phi$ is injective on hairy loops, so $\ker(\phi)\subseteq \im(\bdry_\hairy)$.  To see the opposite inclusion, let ${\bf H}$ be a generator of $C_{2,1}\hairy$. Then $\bf H$ is represented by two trivalent planar trees joined by one oriented edge which connects them, plus another oriented edge.   If the second oriented edge has both ends in one tree, then $\bdry_\hairy(\bf H)$ consists of one graph with a separating unoriented edge, and $\phi(\bdry_\hairy(\bf H))=0$.  If the second oriented edge also joins the two trees, then $\bdry_\hairy(\bf H)$ has two terms, each an oriented loop with trees attached. By Lemma~\ref{order}, they are actually the same graph modulo boundaries. However, the signs are opposite, so $\bdry_\hairy(\bf H)=0$.
\end{proof}
We remark that it is not difficult to show that $\Tr_*$ maps onto $\displaystyle\bigoplus_{k\geq 1}S^{2k+1} V\subset H_{1,1}(\hairy)$, which recovers the part of abelianization constructed by Morita in \cite{Morita}.

\begin{proposition}\label{thm:ranktwo}For $\cO=\Lie,$ $H_{1,2}(\hairy)$ is isomorphic to the subspace of the polynomial ring $\F[V\oplus V]$ characterized by the conditions:
\begin{enumerate}
\item $f(x,y)=f(y,x)$;
\item $f(x,y)=-f(-x,y)$;
\item $f(x,y)+f(y,-x-y)+f(-x-y,x)=0$.
\end{enumerate}
\end{proposition}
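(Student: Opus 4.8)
The plan is to compute $H_{1,2}(\hairy)$ directly from the combinatorics of rank-$2$ hairy Lie graphs, exactly as in the rank-$0$ and rank-$1$ cases, and to translate the surviving graphs into a polynomial condition. First I would analyze the generators of $C_{1,2}\hairy$: a rank-$2$ hairy Lie graph in $C_1\hairy$ has a maximal tree on which the unoriented edges live, together with two extra oriented edges. By Lemma~\ref{separatingLie} we may assume no unoriented edge separates $\G$ into two nontrivial components, and by Lemma~\ref{order} and Lemma~\ref{slideLie} we may slide and permute hairs freely. Applying these reductions, the core (the graph obtained by deleting all hairs) must be a rank-$2$ uni-trivalent graph with no separating edge and no vertex carrying a hanging tree — i.e.\ a theta graph or a figure-eight (wedge of two loops). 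One then checks, as in Proposition~\ref{thm:rankone}, that graphs whose core is a figure-eight are either null-homologous (a separating-edge argument at the wedge vertex, via Lemma~\ref{separatingLie}) or killed by orientation-reversing automorphisms, so only the theta-graph core survives.

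Next I would set up the bookkeeping for a theta graph with hairs. A theta graph has three edges joining two trivalent vertices; fixing an orientation on the two independent cycles gives each hair a well-defined ``homology class'' in $H_1(\Theta)\cong\F^2$, which after sliding (Lemma~\ref{slideLie}) depends only on which of the three arcs the hair sits on. Recording the product over all hairs of their labels, with each label placed in the copy of $V$ indexed by its homology class, gives a map $\phi\colon C_{1,2}\hairy\to \F[V\oplus V]$; more precisely, a hair on the three arcs contributes a label tagged by $x$, by $y$, or by $x+y$ (reading the third arc as the difference of the two cycles with appropriate sign). The three defining conditions then arise naturally: condition (1), $f(x,y)=f(y,x)$, from the automorphism of $\Theta$ swapping the two ``outer'' arcs; condition (2), $f(x,y)=-f(-x,y)$, from reversing the orientation of a cycle, which by the orientation-reversal mechanism of Figure~\ref{orient} both negates a variable and introduces a sign; and condition (3), the ``$3$-term'' relation, from the $\Sigma_3$-action permuting the three arcs of $\Theta$ cyclically — moving a hair from one arc to the next replaces its tag $x\mapsto y\mapsto -x-y\mapsto x$, and the graphs with a hair on a fixed arc but distributed over the three arc-choices sum to a boundary (the analogue of the figure-eight/separating-edge vanishing), forcing the sum of the three ``rotated'' polynomials to vanish.

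I would then prove $\phi$ descends to an isomorphism on homology, following the template of Proposition~\ref{thm:rankone}: surjectivity onto the subspace cut out by (1)--(3) by exhibiting explicit theta-graphs realizing any monomial satisfying the constraints, and injectivity by showing $\ker\phi=\im\bdry_\hairy$. For the latter, one takes a generator $\bf H$ of $C_{2,2}\hairy$, which is two planar trivalent trees joined by oriented edges, and checks that each case of $\bdry_\hairy(\bf H)$ either produces a graph with a separating unoriented edge (hence $0$ under $\phi$) or a pair of theta-graphs that are equal modulo the relations already imposed but carry opposite signs. The main obstacle I expect is precisely this last step: correctly matching the signs coming from the Chevalley--Eilenberg-style orientation conventions on $\bdry_\hairy$ against the signs produced by the arc-automorphisms and orientation reversals of $\Theta$, so that the three stated relations are exactly — not merely up to sign — the relations satisfied by the image, and so that no further relations are hiding in $\im\bdry_\hairy$. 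This is a finite but delicate check; everything else is a routine adaptation of the rank-$\le 1$ arguments.
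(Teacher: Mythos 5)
Your overall template (reduce to theta-graph cores, read off a two-variable polynomial, match relations against $\im\bdry_\hairy$) is the paper's, but the mechanisms you propose for conditions (2) and (3) have genuine gaps, and both trace back to the same omission: you never invoke the IHX (Jacobi) relation, which is indispensable here. For condition (2), you argue by ``reversing the orientation of a cycle'' as in Figure~\ref{orient}. That mechanism works for the rank-one loop because the loop admits an automorphism reversing its single cycle; the theta graph does not. Any automorphism of $\Theta$ either preserves the directions of all three arcs or swaps the two trivalent core vertices and reverses all three arcs at once, so there is no automorphism reversing a single oriented edge, and at best the vertex-swap gives a constraint on the \emph{total} parity of hairs, not the parity on each edge separately. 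Moreover, reversing an edge direction only changes the orientation datum (costing a sign) --- it does not negate the hair labels, so it cannot by itself produce the substitution $x\mapsto -x$. The actual source of condition (2) is the IHX relation applied at the tree edge of the theta graph: one term of the relation is supported on the eyeglass graph (a boundary by Lemma~\ref{separatingLie}) and the other reproduces the original graph with a sign governed by the hair parity, giving $2\G=0$ when the number of hairs on an oriented edge is even (Figure~\ref{IHX2G}). Relatedly, your list of trivalent rank-two cores should be ``theta or eyeglass,'' not ``theta or figure-eight'': the wedge of two loops has a $4$-valent vertex and is not a hairy Lie graph; the eyeglass is the graph killed by Lemma~\ref{separatingLie}.

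Condition (3) is also not a consequence of a $\Sigma_3$-action permuting the arcs of $\Theta$: an arc-permuting automorphism can only identify a single generator with $\pm$ another single generator, never produce a three-term linear relation. The relation comes from a genuine boundary computation in $C_{2,2}$: take two tripod spiders joined by three oriented edges (with hairs distributed on them); $\bdry_\hairy$ has three terms, one for each contracted edge, and in each term the contracted edge becomes the new tree edge, so its hairs must be pushed off via IHX onto the two remaining oriented edges --- this is exactly where the substitution $-x-y$ enters and why the three summands are $f(x,y)$, $f(y,-x-y)$, $f(-x-y,x)$. Your three-arc ``homology tag'' picture (tags $x$, $y$, $x+y$) is a reasonable reformulation of this, but its well-definedness on the chain group is itself an IHX statement (a hair on the tree edge equals a signed sum of hairs on the two oriented edges), so you cannot avoid the IHX bookkeeping; without it neither the parity condition nor the three-term relation is justified, and the verification that $\ker\phi=\im\bdry_\hairy$ that you defer to a ``delicate sign check'' is exactly where your argument currently has no mechanism to succeed.
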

\begin{proof}
The chain group $C_{1,2}(\hairy)$ is generated by rank $2$ trivalent graphs $\G$ with hairs attached.  A maximal tree (in this case a single edge) is specified, and the other edges are oriented.  To calculate $H_{1,2}(\hairy)$ we need to account for the relations in $C_{1,2}(\hairy)$ arising from IHX and antisymmetry and calculate the image of $\bdry_\hairy(C_{2,2}(\hairy))$.

There are only two trivalent graphs in rank 2:  the theta graph and the eyeglass graph, so $C_{1,2}$ decomposes as $Tri \oplus Gla$, where $Tri$ is generated by theta graphs and $Gla$ is generated by eyelass graphs.
Any hairy graph based on the eyeglass graph is a boundary by Lemma~\ref{separatingLie}, i.e. $Gla\subset \im(\bdry_\hairy)$.

Using IHX, we can push the hairs off of the tree edge of $\G\in Tri$, decomposing $\G$ as a sum of theta graphs with hairs only on the oriented edges.
If there is an even number of hairs on one of these edges, the IHX relation using the tree edge, together with antisymmetry relations, shows that $\G$ is zero modulo boundaries:  one term of the IHX relation is based on an eyeglass graph, and the other is equal to the first, giving $2\G=0$ (see Figure~\ref{IHX2G}).

\begin{figure}
\begin{center}
\ifpdf\includegraphics[width=4in]{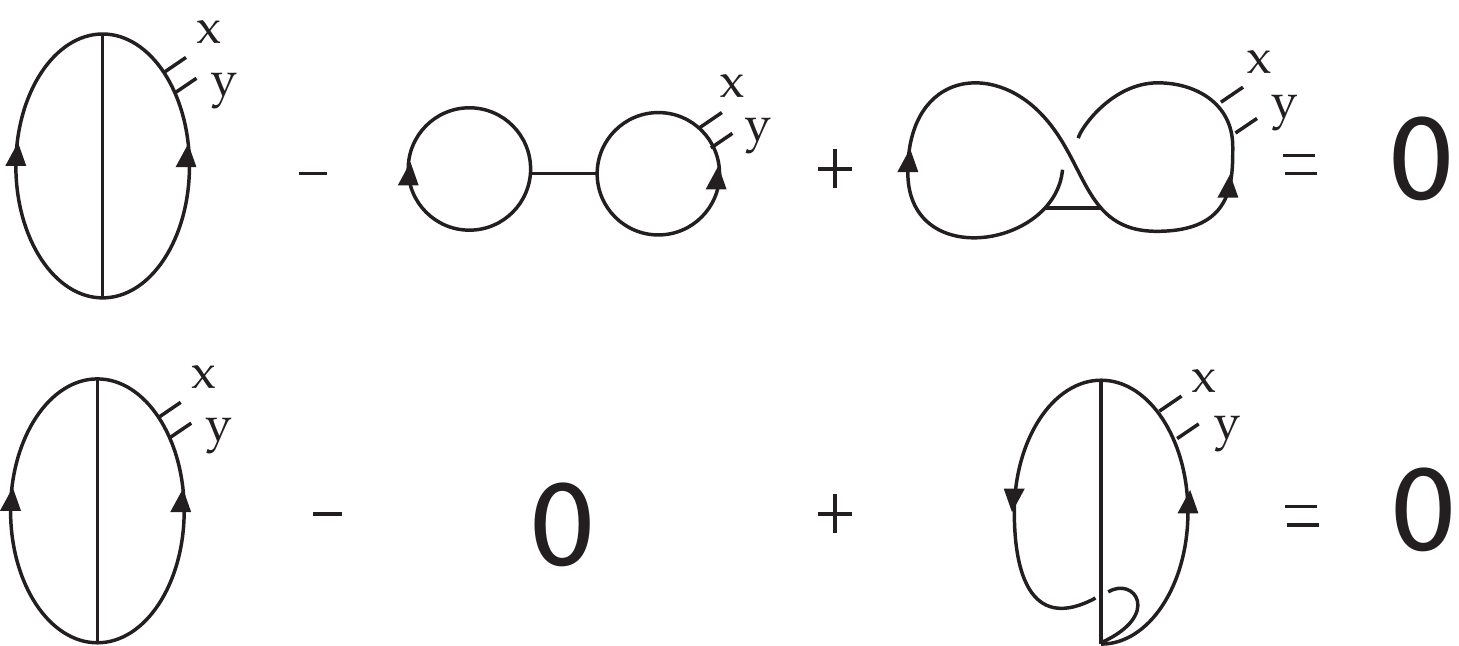}\fi
\caption{IHX relation on hairy theta graph}\label{IHX2G}
\end{center}
\end{figure}

Using anti-symmetry relations, we can make $\G$ planar, put the tree edge in the center and flip each hair to the right-hand side of its oriented edge.  We then associate  to each edge   the monomial formed by multiplying the labels on the hairs. We consider each of these as a monomial in a separate copy of $V$, one with variables $x$ and one with variables $y$, and form their product $f(x,y)$.
By Lemma~\ref{order} the order the hairs are attached is irrelevant modulo boundaries, and the monomial $f(x,y)$ completely determines $\G$. The fact that the number of hairs on each oriented edge is odd  means  the degree of $f(x,y)$ is odd in $x$ and in $y$, which can be rewritten as condition (2).

The symmetry of the theta graph together with the anti-symmetry relation in $\Lie$ imposes  condition  (1).

Finally, condition (3)  identifies  the rest of the image of $\bdry_\hairy$.   Let $\G'$ be a generator of $C_{2,2}$ based on the theta graph.  Then $\G'$   consists of two tripods connected by three oriented edges, and $\bdry_\hairy(\G')$   is a sum of three terms.   Pushing the hairs off of the tree edge in each term corresponds exactly to forming the summands of the third condition.
\end{proof}

\begin{example}\label{ex:ftwok}
Let $x_1,\ldots,x_n,y_1,\ldots,y_n$ be coordinate functions for $V\oplus V$, where $x_i$ represent the first factor and and $y_i$ the second. Suppose $k\geq 2$. Then define a polynomial function
$$f_{2k}(x_1,\ldots,x_n,y_1,\ldots,y_n)=x_1y_2^{2k-1}-x_2y_1y_2^{2k-2}+y_1x_2^{2k-1}-y_2x_1x_2^{2k-2}.$$
One may verify that $f_{2k}$ satisfies the three conditions above, so it represents a nontrivial homology class for $\hairy_{1,2}$ with $2k$ hairs. In particular, this picks up the first degree in which $\hairy_{1,2}\neq 0$, when the number of hairs is $4$. In section~\ref{subsec:autclasses},  we will see that $f_{2k}$ is connected to the Eisenstein series, at least for low values of $k$.
\end{example}

\subsection{$H_{1,r}(\hairy)$ and twisted cohomology of $\Out(F_n)$}
In this section and the next we will give deeper insight into the results of the calculation of $H_{1,2}$, as well as giving a general formula for $H_{1,r}$.  This general formula is in terms of the cohomology of $\Out(F_r)$ with coefficients in the polynomial ring $\F[V^{\oplus r}]$ = $\F[V\otimes \F^{r}]$,
where the action is via the quotient $\Out(F_r) \to \GL_r$ and the standard action
of $\GL_r$ on $\F^{r}$.
We begin by explaining how this is computed, in order to relate it to hairy graph homology.  For a detailed explanation of the relation between (unhairy) Lie graph homology and the cohomology of $\Out(F_r)$ with trivial coefficients we refer to~\cite{CV}, section 3.

The group $\Out(F_n)$ acts on a contractible cube complex $K_n$, called the \emph{spine of Outer space} (see~\cite{CV86}).  Stabilizers of this action are finite, so by a standard argument (see, e.g. Brown's book~\cite{brown}), the quotient $K_n/\Out(F_n)$ has the same cohomology as $\Out(F_n)$ with trivial rational coefficients.    The argument   adapts easily to the case of non-trivial coefficients in a rational representation as follows:

\begin{proposition}\label{brown}
Let $X$ be a contractible $CW$ complex on which a group $G$ acts with finite point stabilizers, let $C^*(X)$ be the cellular cochain complex for $X$, and suppose that $M$ is a $G$-module which is a vector space over a field of characteristic $0$. Then $C^*(X)\otimes_G M$ is a cochain complex computing $H^*(G;M)$.
\end{proposition}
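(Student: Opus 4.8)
The plan is to imitate the standard argument (as in Brown's book \cite{brown}) that the cellular cochain complex of a contractible complex with a cellular $G$-action and finite cell stabilizers computes the cohomology of $G$, being careful to track how nontrivial rational coefficients behave. First I would recall that, since $X$ is contractible, the augmented cellular chain complex $C_*(X) \to \F$ is a resolution of the trivial module $\F$ by free $\Z$-modules (with basis the cells in each dimension); tensoring with $\F$ over $\Z$ keeps it a resolution because $\F$ has characteristic $0$, so $C_*(X;\F) \to \F$ is exact. The group $G$ permutes cells, so each $C_p(X;\F)$ is a permutation $\F[G]$-module $\bigoplus_{\sigma} \F[G/G_\sigma]$, where $\sigma$ ranges over orbit representatives of $p$-cells and $G_\sigma$ is the (finite) stabilizer.

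The key point is that each such $\F[G/G_\sigma]$ is \emph{acyclic for the functors} $\Hom_G(-,M)$ and $-\otimes_G M$ when $M$ is a $\Q$-vector space: since $|G_\sigma|$ is invertible in $\F$, Maschke's theorem gives that $\F[G/G_\sigma]$ is a direct summand of a free (hence projective) $\F[G]$-module, so it is itself projective, and moreover $H^n(G_\sigma;M)=0$ for $n>0$ by the usual averaging argument, so by Shapiro's lemma $\Ext^n_{\F[G]}(\F[G/G_\sigma],M)=H^n(G_\sigma;M)=0$ for $n>0$. Therefore $C_*(X;\F)$ is a projective resolution of $\F$ over $\F[G]$ (or at least an $\Hom_G(-,M)$-acyclic one, which suffices). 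Dually, $C^*(X;\F) := \Hom_\F(C_*(X;\F),\F)$ with the contragredient $G$-action computes the same $\Ext$-groups, and the standard identification $\Hom_{\F[G]}(C_*(X;\F),M) \cong \Hom_\F(C_*(X;\F),\F) \otimes_{\F[G]} M = C^*(X) \otimes_G M$ (valid because each $C_p$ is finitely generated free over $\F$ in each $G$-orbit, and $M$ is an $\F$-vector space) lets me rewrite the complex computing $H^*(G;M)$ in the form claimed.

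Assembling: $H^*(G;M) = \Ext^*_{\F[G]}(\F,M)$ is computed by applying $\Hom_{\F[G]}(-,M)$ to the resolution $C_*(X;\F) \to \F$, and this complex is canonically isomorphic to $C^*(X) \otimes_G M$; hence the latter has cohomology $H^*(G;M)$, as desired.

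I expect the main obstacle to be purely bookkeeping rather than conceptual: one must verify carefully that $C^*(X) \otimes_G M$ as written (cellular cochains tensored over $G$ with $M$) really is the complex $\Hom_{\F[G]}(C_*(X;\F),M)$, which requires knowing the permutation modules $C_p$ decompose over $G$-orbits with each summand induced from a finite subgroup, and invoking Maschke/Shapiro to kill the higher $\Ext$ (equivalently, to see the orbit summands are projective over $\F[G]$). A secondary point to state cleanly is that the contragredient action on cochains is the correct $G$-module structure so that $C^*(X) \otimes_G M$ means what one wants; once these identifications are in place the conclusion is immediate. For the application in the paper one takes $G = \Out(F_n)$, $X = K_n$ the spine of Outer space (contractible with finite stabilizers), and $M = \F[V^{\oplus r}]$ with the $\GL_r(\Z)$-action pulled back along $\Out(F_n) \to \GL_r(\Z)$.
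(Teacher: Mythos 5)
Your argument is correct, but it proceeds differently from the paper. The paper quotes the equivariant cohomology spectral sequence from Brown's book, $E_1^{pq}=\bigoplus_{\sigma\in\Sigma_p}H^q(G_\sigma;M_\sigma)\Rightarrow H^{p+q}(G;M)$, and notes that finite stabilizers plus characteristic $0$ kill every row $q>0$, so the spectral sequence collapses to the single cochain complex $C^*(X)\otimes_G M$. You instead argue at the level of resolutions: contractibility of $X$ makes the augmented cellular chain complex a resolution of $\F$ by $\F[G]$-modules induced from the finite stabilizers; Maschke makes these projective (equivalently, Shapiro kills $\operatorname{Ext}^{n}_{\F[G]}(\mathrm{Ind}_{G_\sigma}^G(-),M)$ for $n>0$), so $\Hom_{\F[G]}(C_*(X;\F),M)$ computes $\operatorname{Ext}^*_{\F[G]}(\F,M)=H^*(G;M)$, and this complex is then rewritten as $C^*(X)\otimes_G M$. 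The content is essentially the same vanishing statement — your Shapiro step is exactly what collapses the paper's spectral sequence — but your version is more self-contained (no spectral sequence machinery beyond standard homological algebra), while the paper's is shorter given the citation of Brown.

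Two bookkeeping points you should make explicit, both of which you partly anticipated. First, $C_p(X;\F)$ is literally a permutation module $\bigoplus_\sigma\F[G/G_\sigma]$ only if each stabilizer preserves the orientation of its cell; in general it is $\bigoplus_\sigma \mathrm{Ind}_{G_\sigma}^{G}\F_\sigma$ with $\F_\sigma$ the (possibly nontrivial) orientation character of $G_\sigma$ — this is precisely the twisting $M_\sigma$ appearing in the paper's $E_1$-term. Nothing breaks: Maschke and Shapiro apply verbatim to these induced modules, and the degree-$p$ term becomes $\bigoplus_\sigma\bigl(M\otimes\F_\sigma\bigr)^{G_\sigma}$, which agrees with $\bigoplus_\sigma (M_\sigma)_{G_\sigma}=C^p(X)\otimes_G M$ by averaging over the finite group $G_\sigma$. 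Second, your resolution argument naturally produces $\Hom_{\F[G]}(C_*(X;\F),M)$, whereas cellular cochains form a product over cells; identifying the two with $C^*(X)\otimes_G M$ uses that there are finitely many $G$-orbits of cells in each dimension (true for the spine of Outer space, which is cocompact), or equivalently the convention that $C^*(X)$ denotes finitely supported cochains — the same implicit convention the paper uses when it writes $E_1^{p,0}=C^p(X)\otimes_G M$.
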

\begin{proof}
We follow the discussion from~\cite{brown}. Namely, on p.174, equation 7.10, there is a first-quadrant spectral sequence
$$E^1_{pq}=\bigoplus_{\sigma\in\Sigma_p} H_q(G_\sigma,M_\sigma)\Rightarrow H_*(G,M)$$
where $\Sigma_p$ is a set of representatives of $G$-orbits of $p$-cells, $G_\sigma$ is the stabilizer of $\sigma$ and $M_\sigma$ is $M$ twisted by the ``orientation character." Dually, there is a first-quadrant spectral sequence $$E_1^{pq}=\bigoplus_{\sigma\in\Sigma_p} H^q(G_\sigma,M_\sigma)$$ converging to $H^*(G,M)$.
But $G_\sigma$ is finite, and $M_\sigma$ is a $\Q G_\sigma$-module, so $H^q(G_\sigma,M_\sigma)=0$ for $q>0$ (see, e.g. Corollary 10.2, p. 84 of \cite{brown}). Thus $E_1^{p,q}=0$ for all $q>0$, i.e. the spectral sequence collapses  to simply a cochain complex in the row $q=0$.

Now observe that $E_1^{p,0}=\bigoplus_{\sigma\in \Sigma_\sigma}H^0(G_\sigma, M_\sigma) =\bigoplus_{\sigma\in \Sigma_\sigma}(M_\sigma)_{G_\sigma}= C^p(X)\otimes_G M$.
\end{proof}

For any vector space $W$, denote by $\F[W]$ the ring of polynomial functions on $W$. Note that $\F[W]$ is graded by polynomial degree, i.e. $\F[W]=\bigoplus_k \F[W]_k$, where $\F[W]_k=S^kW$ denotes homogeneous polynomials of degree $k$.

\begin{theorem}\label{thm:hairylie}
For $\cO=\Lie$  and $r\geq 2$ there is a graded isomorphism
$$
H_{1,r}(\hairy)\cong H^{2r-3}(\Out(F_r);\F[V^{\oplus r}]),
$$
where $H_{1,r}(\hairy)$ is graded by the number of hairs, and the grading on $H^{2r-3}(\Out(F_r);\F[V^{\oplus r}])$ is given by polynomial degree:  $$H^{2r-3}(\Out(F_r);\F[V^{\oplus r}])\cong \bigoplus_{k\geq 0} H^{2r-3}(\Out(F_r);\F[V^{\oplus r}]_k)$$
\end{theorem}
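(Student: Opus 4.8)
The plan is to realize both sides as (co)kernels of maps between graph complexes and match them. Since $C_{0,r}\hairy=0$, the left side is just $H_{1,r}(\hairy)=\operatorname{coker}\big(\bdry_{\hairy}\colon C_{2,r}\hairy\to C_{1,r}\hairy\big)$. On the right, $2r-3=\operatorname{vcd}(\Out(F_r))$ is the dimension of the spine $K_r$ of Outer space, so by Proposition~\ref{brown} applied to the action of $\Out(F_r)$ on $K_r$ we get $H^{2r-3}(\Out(F_r);\F[V^{\oplus r}])=\operatorname{coker}\big(\delta\colon C^{2r-4}(K_r)\otimes_{\Out(F_r)}\F[V^{\oplus r}]\to C^{2r-3}(K_r)\otimes_{\Out(F_r)}\F[V^{\oplus r}]\big)$, where, writing $\F[V^{\oplus r}]=\F[V\otimes H_1(F_r)]$, on a cell $(G,\Phi)$ (a marked rank-$r$ graph $G$ with a forest $\Phi\subseteq G$) the local coefficients are $\big(\F[V^{\oplus r}]\otimes\mathrm{or}\big)_{\Aut(G,\Phi)}$, with $\Aut(G,\Phi)$ acting through its action on $H_1(G)\cong H_1(F_r)$. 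Following the template of \cite{CV}, which treats the case of trivial coefficients (Kontsevich's theorem), the hairs will encode the polynomial coefficient. Note that $\bdry_{\hairy}$ lowers the number of spiders while $\delta$ raises cell dimension, so one matches $C_{k,r}\hairy$ with the $(2r-2-k)$-cochains.

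The main work is the identification $C_{1,r}\hairy\cong C^{2r-3}(K_r)\otimes_{\Out(F_r)}\F[V^{\oplus r}]$ compatibly with the differentials. First I would put a generator of $C_{1,r}\hairy$ in standard form: by Lemma~\ref{separatingLie} we discard graphs with a separating unoriented edge, so that, after forgetting its hairs, the underlying graph $G$ is trivalent of rank $r$, the unoriented edges form a spanning tree $\Phi$, and the $r$ oriented edges determine a basis of $H_1(G)\cong\F^r$. Using the IHX relation at trivalent vertices together with Lemmas~\ref{order} and~\ref{slideLie} I would then move all the hairs onto the oriented edges, where their labels — whose order is immaterial modulo boundaries — combine into a symmetric product of vectors of $V$ for each oriented edge, i.e.\ a monomial in $S^{\bullet}(V)^{\otimes r}=\F[V^{\oplus r}]$ read off via the distinguished basis of $H_1(G)$. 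The resulting assignment $\G\mapsto\big((G,\Phi);\text{polynomial}\big)$ is a linear isomorphism onto $C^{2r-3}(K_r)\otimes_{\Out(F_r)}\F[V^{\oplus r}]$: graph automorphisms give the $\Aut(G,\Phi)$-coinvariants, orientation reversal gives the twist by $\mathrm{or}$, and the freedom in choosing the homology basis is exactly the $\GL(H_1(G))$-equivariance built into the coefficient module. One then checks that $\bdry_{\hairy}$ is carried to $\delta$: collapsing an oriented edge joining two tree-components (which is what $\bdry_{\hairy}$ does on $C_{2,r}\hairy$) matches the coface of $K_r$ that adjoins an edge to a two-component forest, while the remaining cofaces of $K_r$ — those blowing up a four-valent vertex of a graph contributing to $C^{2r-4}$ — impose exactly the IHX relations that have already been divided out in $C_{1,r}\hairy$; quotienting $C^{2r-4}(K_r)\otimes\F[V^{\oplus r}]$ by this standard acyclic summand leaves a complex isomorphic to $\bdry_{\hairy}\colon C_{2,r}\hairy\to C_{1,r}\hairy$. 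Since each hair contributes degree one to the polynomial, the isomorphism is graded, with the hair-grading on the left matching the polynomial-degree grading on the right, and Proposition~\ref{brown} finishes the proof.

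The hard part is precisely this identification of chain complexes, and within it two points: (i) showing that the standard form exhausts $C_{1,r}\hairy$ and that the associated polynomial is a complete invariant modulo IHX, antisymmetry, and the orientation relations — this is the Lie analogue of the ``standard form'' argument of the associative case (cf.\ the proof of Theorem~\ref{thm:hairyassoc}, adapted from \cite{mss}), and requires an induction that pushes hairs across vertices using IHX-type $C_{2,r}$-boundaries; and (ii) reconciling the sign and orientation conventions of $\bdry_{\hairy}$ with the cellular coboundary of the cube complex $K_r$, in particular identifying the four-valent-vertex cofaces with IHX. The coefficient system $\F[V^{\oplus r}]$ itself creates no new difficulty, since the hair labels are transported passively under all of the relevant moves and under graph automorphisms; once the combinatorial skeleton of the comparison — which is essentially the content of \cite{CV} in the untwisted case — is in place, the polynomial rides along.
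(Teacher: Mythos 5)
Your proposal follows essentially the same route as the paper's own proof: Proposition~\ref{brown} applied to the spine $K_r$, reduction of generators of $C_{1,r}\hairy$ to trivalent core graphs with all hairs pushed (via Lemma~\ref{separatingLie}, IHX, antisymmetry, and Lemmas~\ref{order} and~\ref{slideLie}) onto the oriented non-tree edges, reading off the hair labels as a monomial in $\F[V^{\oplus r}]$, and matching $\bdry_\hairy$ with $\delta_E$ while the vertex-blowup cofaces $\delta_C$ account for exactly the IHX relations already built into the hairy complex. The only cosmetic difference is bookkeeping: the paper first passes to $\bar C^{2r-3}K_r=C^{2r-3}K_r/\im(\delta_C)$ (trivalent marked graphs modulo IHX on tree edges) before comparing with $C_{1,r}\hairy/\im(\bdry_\hairy)$, which is precisely what your ``standard acyclic summand'' remark amounts to, so your identification should be stated as landing in that quotient rather than in $C^{2r-3}(K_r)\otimes_{\Out(F_r)}\F[V^{\oplus r}]$ itself.
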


\begin{proof}
By Proposition~\ref{brown} applied to the spine $K_r$ of Outer space,  $H^{2r-3}(\Out(F_r);M)$ can be computed using the cochain complex  $C^*=C^*K_r\otimes_{\Out(F_r)} M.$
Recall from~\cite{HV} that each $k$-dimensional cube of $K_r$ is determined by a graph  $G$ equipped with a $k$-edge subforest  $\Phi$ and a \emph{marking},  which is a homotopy equivalence $g$ from $G$ to a fixed rose $R_n$  whose  petals are identified with the generators of $F_n$. The cube $(G, \Phi, g)$ is oriented by ordering the edges of the forest $\Phi$.
The coboundary operator is a sum of two operators $\delta_E$ and $\delta_C$, which add an edge to the forest in all possible ways and expand a vertex into a forest edge in all possible ways, respectively.

The top-dimensional cubes of $K_r$ correspond to marked trivalent graphs with maximal trees, so are $(2r-3)$-dimensional.  The $(2r-4)$-dimensional cubes correspond either to trivalent graphs or to graphs with one $4$-valent vertex.  Using $\delta_C$ to expand the 4-valent vertex in the three possible ways gives the terms of the IHX relation, so the quotient
$$
\bar C^{2n-3}K_r= C^{2n-3}K_r/\im(\delta_C)
$$
is generated by trivalent marked graphs modulo IHX relations using edges of their maximal trees, and
$$
H^{2r-3}(\Out(F_r);\F[V^{\oplus r}]) =
\bar C^{2n-3}K_r\otimes_{\Out(F_r)} \F[V^{\oplus r}]/\im(\delta_E\otimes 1).
$$

We now turn to the hairy graph homology computation
$$
H_{1,r}(\hairy) = C_{1,r}\hairy/\im(\bdry_\hairy).
$$
A generator $\G$ of $C_1\hairy$ can be represented (modulo anti-symmetry and IHX relations) by a planar trivalent tree with some pairs of leaves joined by oriented edges and the rest labeled by elements of $V$.  If $\G$ has a separating unoriented edge which is not a hair then $\G$ is a boundary by Lemma~\ref{separatingLie}.   If all separating edges are hairs,  then removing these hairs results in a trivalent \emph{core graph} $G$.  If any hairs are on unoriented edges of $G$, then they may be moved using IHX relations to the oriented edges.  Using anti-symmetry relations, we may flip each hair to the right-hand side of its oriented edge.
Thus, as generators for $H_{1,r}(\hairy)$ we may take trivalent graphs $G$ of rank $r$ such that the unoriented edges form a maximal tree $T$ and the oriented edges $\vec e$ have  labeled hairs attached to the right-hand side.  Modulo $\hbox{im}(\bdry_\hairy)$, the order of the hairs on each oriented edge does not matter.

We  can now define the isomorphism
$$
f\colon C_{1,r}\hairy/\hbox{im}(\bdry_\hairy) \to \bar C^*K_r\otimes_{\Out(F_r)} \F[V^{\oplus r}]/\hbox{im}(\delta_E\otimes 1).
$$
Let $\G$ be a generator of $C_{1,r}\hairy$, as described above.   To get a marking $g\colon G\to R_n$, we collapse the unoriented edges of $G$ to obtain a rose $G/T$, then choose a homeomorphism from $G/T$ to the standard rose $R_n$ preserving the orientations on the edges.    If $g(\vec e)=x_i$  set $m_i\in \F[V]$ equal to the product of the labels of the hairs on $\vec e$.  Then
$$
f(\G)=(G, T, g)\otimes m_1\ldots m_r\in \F[V^r].
$$

This map is well-defined and surjective; in particular it does not depend on the choice of the homeomorphism from $G/T$ to $R_n$ since the symmetric group permuting the petals of $R_n$ is a subgroup of $\Out(F_r)$.  To see that it is injective, note that $\bdry_\hairy$ coincides with $\delta_E$ under this map.
\end{proof}

\begin{remark}
This proof does not work to compute $H_i(\hairy)$ for $i>1$ since we allowed ourselves to slide hairs  across oriented edges using  Lemma~\ref{slideLie}. Unfortunately, there is no analogue of Lemma~\ref{slideLie} for hairy graphs in $C_i\hairy$ with $i>1$.
\end{remark}

\subsection{$H_{1,2}(\hairy)$ and modular forms}
In this section, we let $\F=\C$.
For $r=2$  we have identified
$$
H_{1,2}(\hairy) \cong H^1(\Out(F_2), \C[V\oplus V]).
$$

Since the abelianization map $F_2\to \Z^2$ induces an isomorphism $\Out(F_2)\cong \GL_2(\Z)$  we can use the representation theory of $\GL_2(\Z)$ to calculate this group precisely.  The answer involves the dimension $s_k$ of the space of weight $k$ cuspidal modular forms for $\SL_2(\Z),$ which is zero if $k$ is odd or if $k=2$. For $k>2$ even, it is given by
 $$s_k=\begin{cases}
\lfloor k/12\rfloor-1 &\text{ if } k\equiv 2\mod 12\\
\lfloor k/12\rfloor &\text{ if } k\not\equiv 2\mod 12
\end{cases}.
$$
Recall also that the Weyl module $\SF{(k,\ell)}V$ is the irreducible representation of $\GL(V)$ corresponding to the partition $(k,\ell)$.

\begin{theorem}
\label{thm:modular}
There is a graded isomorphism
$$
H_{1,2}(\hairy)\cong \bigoplus_{k>\ell\geq 0} (\SF{(k,\ell)}V)^{\oplus \lambda_{k,\ell}}
$$
where $\lambda_{k,\ell}=0$ unless $k+\ell$ is even, in which case $\lambda_{k,\ell}=\begin{cases}s_{k-\ell+2} &\text{if }\ell \text{ is even}\\
s_{k-\ell+2}+1&\text{if }\ell \text{ is odd}.
\end{cases}$

The grading on $H_{1,2}(\hairy)$ is by the number of hairs (=degree - 2) and on $ \bigoplus_{k>\ell\geq 0}(\SF{(k,\ell)}V)^{\oplus \lambda_{k,\ell}}$ is by $k+\ell$.
\end{theorem}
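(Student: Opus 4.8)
The plan is to combine the $r=2$ case of Theorem~\ref{thm:hairylie} with the representation theory of $\GL_2(\Z)$ and the Eichler--Shimura isomorphism. Since $\F=\C$ throughout this section, Theorem~\ref{thm:hairylie} already gives a graded isomorphism
$$
H_{1,2}(\hairy)\;\cong\;H^1\bigl(\Out(F_2);\C[V\oplus V]\bigr)\;\cong\;H^1\bigl(\GL_2(\Z);\C[V\otimes\C^2]\bigr),
$$
using $\Out(F_2)\cong\GL_2(\Z)$ and writing $V^{\oplus 2}=V\otimes\C^2$ so that $\GL_2(\Z)$ acts on the second tensor factor only; under Theorem~\ref{thm:hairylie} the hair-count grading on the left corresponds to polynomial degree on the right. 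So it remains to compute this twisted cohomology group of $\GL_2(\Z)$ as a graded $\GL(V)$-module.

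First I would decompose the coefficient module by the Cauchy formula for the symmetric algebra of a tensor product (see, e.g.~\cite{FH}): as a module over $\GL(V)\times\GL_2(\C)$,
$$
\C[V\otimes\C^2]\;=\;S^{\bullet}(V\otimes\C^2)\;\cong\;\bigoplus_{k\ge\ell\ge 0}\SF{(k,\ell)}V\otimes\SF{(k,\ell)}\C^2,
$$
where only partitions with at most two rows contribute because $\dim\C^2=2$, and the summand indexed by $(k,\ell)$ sits in polynomial degree $k+\ell$. This decomposition is $\GL_2(\Z)$-equivariant (the action being on the second factor), so $H^1(\GL_2(\Z);\C[V\oplus V])\cong\bigoplus_{k\ge\ell\ge 0}\SF{(k,\ell)}V\otimes H^1(\GL_2(\Z);\SF{(k,\ell)}\C^2)$ and hence $\lambda_{k,\ell}=\dim H^1(\GL_2(\Z);\SF{(k,\ell)}\C^2)$. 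Using the $\GL_2$-module identification $\SF{(k,\ell)}\C^2\cong\mathrm{Sym}^{k-\ell}(\C^2)\otimes(\det)^{\otimes\ell}$, and noting that $(\det)^{\otimes\ell}$ restricted to $\GL_2(\Z)$ is the trivial representation when $\ell$ is even and the sign representation $\mathrm{sgn}=\det$ when $\ell$ is odd, this reduces the problem to computing $\dim H^1(\GL_2(\Z);\mathrm{Sym}^m\C^2)$ and $\dim H^1(\GL_2(\Z);\mathrm{Sym}^m\C^2\otimes\mathrm{sgn})$ for $m=k-\ell$.

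Next I would pass to the index-two subgroup $\SL_2(\Z)$. Because we are in characteristic zero, the Lyndon--Hochschild--Serre spectral sequence for $\SL_2(\Z)\lhd\GL_2(\Z)$ degenerates and gives $H^1(\GL_2(\Z);N)\cong H^1(\SL_2(\Z);N)^{\langle J\rangle}$ for any $\GL_2(\Z)$-module $N$, where $J\in\GL_2(\Z)\setminus\SL_2(\Z)$ is any lift of the generator of $\GL_2(\Z)/\SL_2(\Z)$ (the induced involution on $H^1(\SL_2(\Z);-)$ being independent of the lift, since inner automorphisms act trivially on cohomology). Taking $N=\mathrm{Sym}^m\C^2$ or $\mathrm{Sym}^m\C^2\otimes\mathrm{sgn}$ identifies the two groups we want with the $(+1)$- and $(-1)$-eigenspaces of $J$ on $H^1(\SL_2(\Z);\mathrm{Sym}^m\C^2)$. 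The Eichler--Shimura isomorphism, together with Haberland's analysis of Eichler cohomology \cite{Hab}, now supplies what is needed: $H^1(\SL_2(\Z);\mathrm{Sym}^m\C^2)$ vanishes for $m$ odd (since $-I$ acts by $-1$) and for $m=0$ (since $\SL_2(\Z)$ has finite abelianization), while for even $m\ge 2$ it has dimension $2s_{m+2}+1$, with the cuspidal part of dimension $2s_{m+2}$ splitting into $J$-eigenspaces of equal dimension $s_{m+2}$ (one for each Hecke eigenform, via its even and odd period polynomials) and the one-dimensional Eisenstein part lying in the $(-1)$-eigenspace of $J$. Hence $\dim H^1(\GL_2(\Z);\mathrm{Sym}^m\C^2)=s_{m+2}$ and $\dim H^1(\GL_2(\Z);\mathrm{Sym}^m\C^2\otimes\mathrm{sgn})=s_{m+2}+1$ for even $m\ge 2$, and both vanish otherwise. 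Feeding this into the formula $\lambda_{k,\ell}=\dim H^1(\GL_2(\Z);\SF{(k,\ell)}\C^2)$ with $m=k-\ell$ (the diagonal partitions $k=\ell$ contributing nothing, since then $m=0$) gives $\lambda_{k,\ell}=0$ unless $k+\ell$ is even, and for $k>\ell$ with $k+\ell$ even, $\lambda_{k,\ell}=s_{k-\ell+2}$ if $\ell$ is even and $\lambda_{k,\ell}=s_{k-\ell+2}+1$ if $\ell$ is odd. Matching the polynomial-degree grading with $k+\ell$ (hence, via Theorem~\ref{thm:hairylie}, with the hair count) completes the proof.

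The step I expect to be the main obstacle is the final input described above: pinning down the action of the reflection $J$ on the Eichler cohomology $H^1(\SL_2(\Z);\mathrm{Sym}^m\C^2)$ --- in particular, that the cuspidal part splits into equal $\pm 1$ eigenspaces and that the Eisenstein class is $J$-anti-invariant. This is exactly the content of the decomposition of Eichler cohomology into even and odd period polynomials (Haberland \cite{Hab}, and the related work of Kohnen and Zagier), and some care with sign conventions is required so that the parity of $\ell$ is correctly matched to the even/odd period-polynomial decomposition. Everything else --- the Cauchy decomposition, the characteristic-zero transfer argument, and the tracking of the two gradings --- is routine.
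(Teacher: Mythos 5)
Your proposal is correct and follows essentially the same route as the paper: reduce via Theorem~\ref{thm:hairylie} and $\Out(F_2)\cong\GL_2(\Z)$, decompose $\C[V\otimes\C^2]$ by Schur--Weyl (Cauchy) into two-row Weyl modules $\SF{(k,\ell)}V\otimes\bigl(\mathrm{Sym}^{k-\ell}\C^2\otimes\det^{\otimes\ell}\bigr)$, pass to $\SL_2(\Z)$ by a characteristic-zero index-two argument (the paper uses the five-term exact sequence where you invoke the degenerate LHS spectral sequence, which is the same computation), and finish with Eichler--Shimura and Haberland's identification of the $\pm$-eigenspaces of the involution, including the Eisenstein class in the anti-invariant part and the vanishing for $k=\ell$.
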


\begin{proof}
By Theorem~\ref{thm:hairylie}, $H_1(\hairy)\cong
H^1(\Out(F_2);\C[V\oplus V])$.
Since the natural map from $\Out(F_2)$ to $\GL_2(\Z)$ is an isomorphism, we may instead compute
$H^1(\GL_2(\Z);\C[V\oplus V]).$

Set $P_V=\C[V\oplus V]=\C[V\otimes \C^2]$.
Then $P_V$ is a $\GL(V)\otimes \GL_2(\C)$-module, which by Schur-Weyl  duality can be decomposed as
$$
P_V=\bigoplus_\lambda\, \SF{\lambda} V\otimes \SF{\lambda}\C^2.
$$
(See~\cite{GW} p.218 and p.257.)  The Weyl module $\SF{\lambda}\C^2$ is zero unless $\lambda=(k,l)$ is a Young diagram with only two rows, and
$$
\SF{(k,l)}\C^2 = \C_{l}\otimes H_{k-l}
$$
where $H_{k-l}$ is the space of homogeneous polynomials of degree $k-l$, and $\C_{l}$ is the one-dimensional $\GL_2(\C)$-representation given by the $l$th power of the determinant (see~\cite{FH}, Section 6.1).

Thus
\begin{align}
H^1(\GL_2(\Z); P_V)&=
H^1\left(\GL_2(\Z); \bigoplus_{k\geq l\geq 0}
\SF{(k,l)}V\otimes \C_{l} \otimes H_{k-l}\right)\\
&\cong\bigoplus_{k\geq l\geq 0}
H^1(\GL_2(\Z); \C_{l} \otimes H_{k-l})\otimes \SF{(k,l)}V.\notag
\end{align}

The cohomology 5-term exact sequence of the extension
$$
1\to \SL_2(\Z) \to \GL_2(\Z)\to\mathbb \Z_2\to 1
$$
reads
$$
0 \to H^1(\Z_2,\C_{l} \otimes H_m)\to H^1(\GL_2(\Z),\C_{l} \otimes H_m)\to H^1(\SL_2(\Z),\C_{l} \otimes H_m)^{\Z_2}\to \hbox{\hskip 1.5in}
$$
$$
\hbox{\hskip 2.5in}H^2(\Z_2,\C_{l} \otimes H_m)\to H^2(\GL_2(\Z),\C_{l} \otimes H_m).
$$
Since $H^1(\Z_2;M)=H^2(\Z_2;M)=0$ for any vector space $M$
over a field of characteristic $0$
this gives
$$
H^1(\GL_2(\Z); \C_{l} \otimes H_m)\cong
\left(H^1(\SL_2(\Z); \C_{l} \otimes H_m)\right)^{\Z_2}
\cong
\left( \C_{l} \otimes H^1(\SL_2(\Z);H_m)\right)^{\Z_2},
$$
where the generator of $\mathbb Z_2$ acts on $\C_{l}$ via multiplication by $(-1)^l$.

The computation is now completed using Eichler-Shimura theory
(see, e.g.~\cite[p. 246-247]{Hab}, ). The action of $\Z_2$  is induced by conjugation by $\epsilon=\begin{pmatrix} -1 & 0 \\ 0 & 1\end{pmatrix}$, and
$$
H^1(\SL_2(\Z);H_m)\cong H^1_+\oplus H^1_-
$$
where $H^1_+$ is the $(+1)$-eigenspace of the action, and $H^1_-$ is the $(-1)$-eigenspace.  These eigenspaces are given by
$$
H^1_+ \cong M^0_{m+2},
$$
where $M^0_{m+2}$ is the vector space of weight $m+2$ cuspidal modular forms  for the full modular group $\SL_2(\Z)$ and
$$
H^1_-\cong  {M}^0_{m+2}\oplus E_{m+2},
$$
where  $E_{m+2}=0$ if $m$ is odd or if $m=0$, and otherwise  $E_{m+2}$ is the one dimensional space spanned by the Eisenstein series in degree $m+2$.    Therefore, if the action of $\GL_2(\Z)$ is standard, we get
 $$H^1(\GL_2(\Z); H_m)\cong H^1(\SL_2(\Z);H_m)^{\Z_2}\cong H^1_+\cong M^0_{m+2}$$
 and if the action is twisted by the determinant we get
 $$H^1(\GL_2(\Z); H_m)\cong H^1(\SL_2(\Z);H_m)^{\Z_2}\cong H^1_-\cong M^0_{m+2}\oplus E_{m+2}.$$
Since $M\otimes \SF{(k,l)}V\cong (\SF{(k,l)}V)^{\dim M}$, plugging this result into expression (1) above   gives the theorem. Notice that the case $m=0$ is special since $E_2=0$. In this case $ H^1(\GL_2(\Z); H_0)=0$, so that partitions where $k=\ell$ do not contribute.
\end{proof}

Here is a table of all Weyl modules which appear in $H_{1,2}(\hairy)$  for graphs with at most $14$ hairs, i.e. $k+l\leq 14$.   The notation $(k,\ell)^m$ means that $\SF{(k,\ell)}V$ appears $m$ times.
\begin{center}
\begin{tabular}{c|c|c|c|c|c|c}
$2$&$4$&$6$&$8$&$10$&$12$&$14$\\
\hline
&$(3,1)$&$(5,1)$&$(7,1)$&$(10,0)$&$(11,1)^2\rule{0pt}{3.5ex}$&$(14,0)$\\
&&&$(5,3)$&$(9,1)$&$(9,3)$&$(13,1)$\\
&&&&$(7,3)$&$(7,5)$&$(12,2)$\\
&&&&&&$(11,3)$\\
&&&&&&$(9,5)$
\end{tabular}
\end{center}

\section{Comparison to Morita's trace}
In this section, we describe Morita's trace map \cite{Morita} using graphical insights from \cite{CVMorita}, and see how it relates to the trace map $\Tr$ defined in this paper for the Lie case.
 It is defined on $\lplus_V$ as follows: sum over connecting pairs of univalent vertices by an edge (direction arbitrarily fixed) and multiply by the contraction of the coefficients of the involved univalent vertices. (See Figure~\ref{fig:moritatrace}.)
 This yields a graph with a loop and some attached trees. Each tree represents an element of the free Lie algebra over $V$, which we can include in the free associative algebra. Thus, reading around the circle in the direction indicated by the edge's direction, we obtain an element of the free associative algebra $T(V)$. The free associative algebra has an involution defined on words by $w\mapsto (-1)^{|w|}\bar w$, where $|w|$ is the length of the word and $\bar w$ is the reversal of the word. In order to account for the ambiguity of the circle's orientation, the image of this map will take values in $T(V)$ modulo this involution. Now project to the free commutative algebra generated by $V$: $\bigoplus_{k=0}^\infty S^kV$. Dividing this by the image of the involution on $T(V)$, we are left with only the odd powers
$ \bigoplus_{k=0}^\infty S^{2k+1}(V)$, and this is the target of Morita's trace map:
$$\Tr^M\colon \lplus_V\to  \bigoplus_{k=0}^\infty S^{2k+1}(V) $$

\begin{figure}[h]
$$
\begin{minipage}{2.7cm}
\begin{tikzpicture} [thick]
\draw (0,0) -- (-.5,.866) node [fill=white] {$v_1$};
\draw (0,0) -- (-.5,-.866) node [fill=white] {$v_2$};
\draw (0,0) -- (.5,0);
\draw (.5,0)--(.5,.866);
\draw (.5,.866)--(0,1.732) node [fill=white]{$v_3$};
\draw (.5,.866)--(1.0,1.732) node [fill=white]{$v_4$};
\draw (.5,0)--(1.0,0);
\draw (1.0,0)--(1.5,.866) node [fill=white] {$v_5$};
\draw (1.0,0)--(1.5,-.866) node [fill=white] {$v_6$};
\end{tikzpicture}
\end{minipage}\mapsto
\omega(v_2,v_6)
\begin{minipage}{2.7cm}
\begin{tikzpicture} [thick]
\draw (0,0) -- (-.5,.866) node [fill=white] {$v_1$};
\draw (0,0) -- (-.5,-.866);
\draw (0,0) -- (.5,0);
\draw (.5,0)--(.5,.866);
\draw (.5,.866)--(0,1.732) node [fill=white]{$v_3$};
\draw (.5,.866)--(1.0,1.732) node [fill=white]{$v_4$};
\draw (.5,0)--(1.0,0);
\draw (1.0,0)--(1.5,.866) node [fill=white] {$v_5$};
\draw (1.0,0)--(1.5,-.866);
\draw[densely dotted, ->] (1.5,-.866) arc (0:-90:1); 
\draw[densely dotted] (-.5,-.866) arc (180:270:1);
\end{tikzpicture}
\end{minipage}
+\cdots
\mapsto \omega(v_2,v_6) v_1[v_3,v_4]v_5+\cdots
$$
\caption{The first step in defining Morita's trace map. Sum over adding an edge to the tree in all possible ways, and read off the resulting element in the free associative algebra.}\label{fig:moritatrace}
\end{figure}
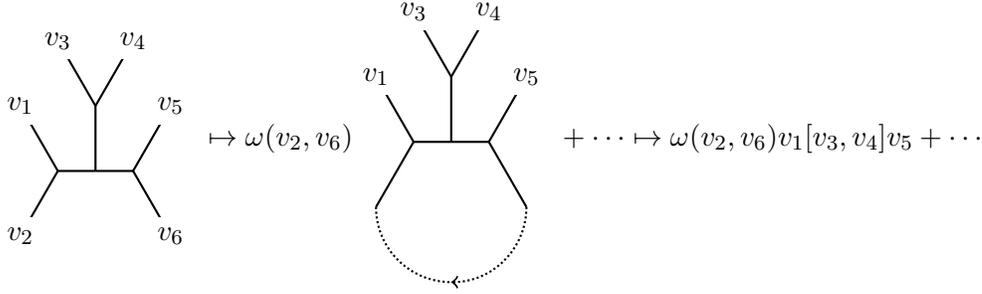

 After stabilizing,
 it is not difficult to show that $\Tr^M$ is surjective, and that any bracket of two trees is in the kernel of this trace, so that this actually gives rise to a very interesting abelian quotient of the Lie algebra $\lplus_\infty$. The $k=0$ summand corresponds to trees in $\lplus_\infty$ with a single trivalent vertex, which cannot be brackets of smaller trees by degree reasons. Such trees form an isomorphic copy of $\ext^3 V_\infty$ inside of $(\lplus_\infty)_{\rm ab}$, so $\Tr^M$ is not an isomorphism at this bottom degree (degree $1$). However, if we replace $S^1V_\infty=V_\infty$ by $\ext^3 V_\infty$ in the above direct sum, Morita conjectured that this is isomorphic to the entire abelianization~\cite[Conjecture 6.1]{Morita}. 
  
Consider the middle term in Figure~\ref{fig:moritatrace}. This was a convenient graphical midway point in calculating the Morita trace, but we now adopt the point of view that the vector space spanned by trees with an additional (dashed) edge is actually the natural target of the Morita trace. Indeed such graphs form a subspace  of the hairy graph complex $\mathcal H$.  With this point of view, it is not too hard to show that Morita's trace map induces a map from the abelianization to $H_1(\mathcal H)$, which lands in the subspace of rank $1$ graphs.
 
The preceding discussion can be summarized by the following proposition. 
\begin{proposition}
The Morita trace $\Tr^M$ is equal to the composition 
$$
\ext^1 \lplus_\infty\hookrightarrow \ext \lplus_\infty\overset{\Tr}{\longrightarrow}\hairy\twoheadrightarrow \hairy_{1,1}.
$$
\end{proposition}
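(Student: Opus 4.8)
The plan is to evaluate both maps on an arbitrary single Lie spider and check they agree; by linearity it suffices to take a basic spider $s$ of degree $d\ge 1$. Since $s$ is a single wedge factor, $\bdry_{\text{Lie}}s=0$, so $\Tr(s)$ is a cycle in $\hairy_\infty$, and $\Tr(s)=\exp(T)(\iota s)=\iota s+T(\iota s)+\tfrac{1}{2!}T^2(\iota s)+\cdots$. The crucial observation is that $\iota s$ is a tree (a single vertex carrying only hairs, in the one-vertex picture of spiders), so joining $m$ pairs of its hairs by edges always produces a connected graph of rank exactly $m$; hence $\tfrac{1}{m!}T^m(\iota s)$ lies in the rank-$m$ subcomplex $C_{1,m}\hairy$. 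Because $\bdry_\hairy$ preserves rank, in the decomposition $H_1(\hairy)=\bigoplus_r H_{1,r}(\hairy)$ the projection of $[\Tr(s)]$ onto the rank-$1$ summand is exactly $[T(\iota s)]$. Thus the composition in the proposition sends $s$ to the class of $T(\iota s)$ in $H_{1,1}(\hairy)$.

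First I would identify $T(\iota s)$ with the ``middle object'' of Figure~\ref{fig:moritatrace}. By definition $T(\iota s)=\sum_{|M|=1}(\iota s)^M$, where the sum is over unordered pairs of hairs $\{\lambda,\mu\}$ and $(\iota s)^M$ is obtained by joining $\lambda$ to $\mu$ by an oriented edge, deleting the labels $x_\lambda,x_\mu$, and multiplying by $\omega(x_\lambda,x_\mu)$; reversing the added edge multiplies by $\omega(x_\mu,x_\lambda)=-\omega(x_\lambda,x_\mu)$. This is exactly Morita's first step: sum over all ways of joining two univalent vertices by an edge with an arbitrarily fixed direction, weighted by the contraction of the two coefficients.

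Next I would match the remaining steps of Morita's recipe with the isomorphism $H_{1,1}(\hairy)\cong\bigoplus_{k\ge 0}S^{2k+1}V$ of Proposition~\ref{thm:rankone}. That isomorphism is realized by the chain map $\phi$, which vanishes on a rank-$1$ hairy Lie graph unless it is a single oriented loop with hairs, in which case it returns the product of the hair labels. For a term $(\iota s)^M$ of $T(\iota s)$ whose loop carries a hanging Lie subtree with at least two leaves, $\phi$ returns $0$; correspondingly, reading that subtree into the tensor algebra $T(V)$ and projecting to the symmetric algebra $\bigoplus_k S^k V$ produces the image of a Lie bracket, which is $0$, so Morita's recipe also returns $0$ from that term. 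For a term whose loop is bare, reading its hair labels around the loop into $T(V)$ and projecting to $\bigoplus_k S^k V$ gives precisely the product of those labels, i.e. $\phi$ of the graph; and the collapse of $\bigoplus_k S^k V$ onto $\bigoplus_k S^{2k+1}V$ via the involution $w\mapsto (-1)^{|w|}\bar w$ corresponds exactly to the orientation-reversing automorphism that already kills loops of even length in $\hairy$ (as in the proof of Proposition~\ref{thm:rankone}). Applying $\phi$ term by term to $T(\iota s)$ therefore yields exactly $\Tr^M(s)$.

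The hard part will be the bookkeeping in the last paragraph: making the dictionary between the two descriptions of the common target $\bigoplus_k S^{2k+1}V$ --- Morita's (free Lie algebra $\to$ tensor algebra $\to$ symmetric algebra, modulo the involution $w\mapsto (-1)^{|w|}\bar w$) versus the graph-homological one supplied by $\phi$ --- fully precise, in particular checking that the sign obtained from orienting the contracted edge agrees with the sign coming from the parity of the loop length on Morita's side. The rest --- the rank count that isolates the $T$-term of $\Tr$, and the identification of $T(\iota s)$ with Morita's intermediate graph --- is formal.
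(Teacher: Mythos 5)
Your argument is correct and is essentially the paper's own approach: the paper offers no separate proof (the proposition merely ``summarizes the preceding discussion''), and your steps --- using the rank decomposition to isolate the $T$-term of $\Tr(s)$, identifying $T(\iota s)$ with Morita's intermediate graph, and matching $\phi$ from Proposition~\ref{thm:rankone} with the passage free Lie $\to$ tensor $\to$ symmetric algebra (brackets dying under projection just as non-bare loops die under $\phi$, and even-hair loops dying on both sides) --- are exactly the points that discussion rests on. The sign bookkeeping you flag as remaining is at the same level of detail the paper itself leaves implicit.
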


\section{Cycles in the unstable homology of $\Mod(g,s)$, $\Out(F_n)$ and $\Aut(F_n)$}

For each cyclic operad $\cO$ and symplectic vector space $V$, the abelianization map $\lplus_V\to \lplus_V^{\text{ab}}$ is a Lie algebra morphism, where the bracket on $\lplus_V^{\text{ab}}$ is trivial.  If  $\cO$ is  finite dimensional at each level continuous cohomology is defined (see Definition~\ref{cont}) so abelianization induces a backwards map   $H^*_c(\lplus_V^{\text{ab}})\to H^*_c(\lplus_V)$. Taking $\SP$-invariants gives a map
\begin{equation}
\label{eqn:abelianization}
PH^*_c(\lplus_V^{\text{ab}})^{\SP}\to PH_c^*(\lplus_V)^{\SP},
\end{equation}
where $P$ denotes the submodule of primitive elements in the Hopf algebra $H_c^*(\lplus_V)^{\SP}$. Using the fact that the cohomology of a finite-dimensional abelian Lie algebra is simply the exterior algebra on its dual space, the domain of this map is often easy to compute and gives rise to potentially non-trivial elements in the image.

For  $\cO = \Assoc$ and $\cO=\Lie$  a theorem of Kontsevich identifies $PH_c^*(\lplus_V)^{\SP}$ for infinite-dimensional $V$ with the homology of mapping class groups of punctured surfaces and outer automorphism groups of free groups, and a theorem of Gray relates similar cohomology groups to the homology of $\Aut(F_n)$. In this section we show how to exploit these theorems together with the map (\ref{eqn:abelianization}) above to construct cycles for the homology of these groups.

\subsection{Mapping class groups of punctured surfaces}
Let $\Mod(g,s)$ denote the mapping class group of a surface of genus $g$ with $s$ punctures, i.e. the group of isotopy classes of homeomorphisms which preserve the set of punctures (not necessarily pointwise).  Set $V_n=\F^{2n}$ with the standard symplectic form and $V_\infty=\dirlim \F^{2n}$;  write $\lplus_n$ for $\lplus_{V_n}$ and $\lplus_\infty=\lplus_{V_\infty}$.  Kontsevich's theorem for $\cO=\Assoc$ reads:
\begin{theorem}\cite{CV,Ko1,Ko2}
For $\cO=\Assoc$,
$$ PH_c^k(\lplus_{\infty})^{\SP}\cong\bigoplus_{s>0}H_{4g+2s-k-4}(\Mod(g,s);\F).
$$
\end{theorem}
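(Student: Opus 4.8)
This is Kontsevich's theorem; I recall the strategy, following \cite{CV} (see also \cite{Ko1,Ko2}). The starting point is the identification of $PH^*_c(\lplus_\infty)^{\SP}$ with a graph complex. As recalled in Sections~\ref{sec:review}--\ref{complex}, in each fixed degree $d$ the $\SP_{V_n}$-invariants of $\Wedge^\bullet\lplus_n\degree{d}$ stabilize, for $n\gg d$, to the degree-$d$ part of the $\cO$-graph complex of \cite{CV}; since $\SP$ is (pro)reductive in characteristic $0$, taking invariants is exact, so dualizing and passing to the inverse limit defining continuous cohomology shows that $H^*_c(\lplus_\infty)^{\SP}$ is computed by the $\cO$-graph cochain complex, whose differential is edge-expansion (the operation dual to the Lie bracket, splitting one vertex into two joined by a new edge). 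The Hopf-algebra product is disjoint union of graphs, so the primitive part $P$ is computed by the subcomplex of \emph{connected} $\cO$-graphs. For $\cO=\Assoc$, an $\Assoc$-coloring of a vertex is exactly a cyclic ordering of its half-edges, i.e.\ a ribbon (fat) graph structure, and there are no operad relations among basic graphs beyond orientation. Thus it suffices to identify the connected associative ribbon-graph cochain complex with $\bigoplus_{s>0}H_*(\Mod(g,s);\F)$, with the stated grading.

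Next I would set up the topological dictionary. A connected ribbon graph $G$ with all vertices of valence $\geq 3$ thickens to a compact oriented surface with boundary; capping each boundary circle with a once-punctured disk gives a closed genus-$g$ surface with $s$ punctures, where $k-E=2-2g-s$ with $k=\#\,\mathrm{IV}(G)$, $E=\#\,\mathrm{IE}(G)$, and the degree of $G$ is $d=\sum_v(\mathrm{val}(v)-2)=2E-2k=4g+2s-4$. By the theorem of Harer--Mumford--Thurston--Penner--Kontsevich, the space of metric ribbon graphs of type $(g,s)$ is homeomorphic to the decorated moduli space $\mathcal{M}^{\mathrm{comb}}_{g,s}\cong \mathcal{M}_{g,s}\times\R_+^{\,s}$, an oriented rational-homology orbifold of real dimension $N=6g-6+3s$ which is homotopy equivalent to $\mathcal{M}_{g,s}$, hence (rationally, as an orbifold) to $B\Mod(g,s)$; using \emph{unlabeled} boundary circles, as the associative operad does, accounts for the permutation action of $\Mod(g,s)$ on the punctures. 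This space carries an ideal orbi-cell decomposition whose cells are indexed by isomorphism classes of ribbon graphs of type $(g,s)$, the cell of $G$ having dimension $E(G)$, with face maps given by shrinking an edge to zero length, i.e.\ by edge contraction. Dually, the cellular coboundary is vertex-splitting, which is exactly the edge-expansion differential above; a ribbon graph with an orientation-reversing automorphism (zero in the graph complex) indexes an empty cell on both sides; and the Kontsevich orientation, a unit vector in $\det\R\,\mathrm{IV}(G)\otimes\bigotimes_{e\in\mathrm{IE}(G)}\det\R H(e)$, matches the canonical orientation of the cell (ordering its edges) after the usual sign bookkeeping for $\cO=\Assoc$ (cf.\ \cite{CV}).

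The degree shift now falls out of a count. A $k$-cochain of type $(g,s)$ corresponds to an $E$-dimensional cell with $E=k+2g+s-2$. Since there are only finitely many ribbon graphs of type $(g,s)$, every cellular cochain is finitely supported, and the ideal-cell cochain complex computes $H^*_c(\mathcal{M}^{\mathrm{comb}}_{g,s};\F)$; by Poincar\'e duality on this oriented $N$-orbifold,
$$
H^{E}_c\big(\mathcal{M}^{\mathrm{comb}}_{g,s};\F\big)\;\cong\;H_{N-E}\big(\mathcal{M}^{\mathrm{comb}}_{g,s};\F\big)\;\cong\;H_{4g+2s-k-4}\big(\Mod(g,s);\F\big),
$$
since $N-E=(6g-6+3s)-(k+2g+s-2)=4g+2s-k-4$. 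Summing over all degrees $d\geq 1$, equivalently over all pairs $(g,s)$ with $s>0$ (each forcing $d=4g+2s-4$), gives the theorem.

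I expect the two substantive obstacles to be exactly the two hand-waved steps: (i) the invariant-theory input, namely that the $\SP_{V_\infty}$-invariants of the Chevalley--Eilenberg complex reproduce the full connected ribbon-graph complex with no instability phenomena, which requires the first fundamental theorem for $\SP$ applied carefully degree by degree together with a stable-range estimate; and (ii) the orientation comparison and the (non-compact) Poincar\'e duality for $\mathcal{M}^{\mathrm{comb}}_{g,s}$ that convert cell-degree compactly-supported cohomology into group homology with the shift $k\mapsto 4g+2s-k-4$. Both are carried out in full in \cite{CV}, to which I would refer for the details, the original results being due to Kontsevich \cite{Ko1,Ko2}.
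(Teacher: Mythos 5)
The paper does not actually prove this statement—it is quoted with citations to Kontsevich and to \cite{CV}—so there is no internal proof to match against; judged on its own, your sketch is a correct outline, but of Kontsevich's \emph{original} duality argument rather than the argument in the cited reference. Your numerology is right: for a connected ribbon graph with $k$ vertices and $E$ internal edges of type $(g,s)$ one has $d=4g+2s-4$, $E=k+2g+s-2$, and $N-E=(6g-6+3s)-E=4g+2s-4-k$, and the route via the ideal cell decomposition of $\mathcal{M}_{g,s}\times\R_{+}^{s}$, compactly supported cohomology, and rational Poincar\'e duality is legitimate precisely because decorated moduli space is an \emph{oriented rational homology orbifold}—this is the step that has no analogue for Outer space, which is why it cannot be imitated in the $\Lie$/$\Out(F_n)$ case. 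The reference \cite{CV} (and, in the same spirit, this paper's own Proposition~\ref{brown} and the proof of Theorem~\ref{thm:hairylie}) proceeds differently: one identifies the connected $\Assoc$-graph complex with the equivariant cochain complex of a contractible spine on which $\Mod(g,s)$ acts with finite stabilizers, so the degree shift $k\mapsto 4g+2s-4-k$ comes from the dimension of the spine and no non-compact duality is invoked; that route also packages the orientation bookkeeping more cleanly. What your approach buys is a direct geometric picture (and it is closer to \cite{Ko1,Ko2}); what the spine approach buys is robustness (it works uniformly for $\Lie$ and $\Assoc$) and avoidance of the two points you rightly flag as the substantive gaps—the $\SP$-invariant-theory/stable-range identification of the graph complex with $\left(\Wedge\lplus_n\degree{d}\right)^{\SP}$ for $n\gg d$ together with the passage to primitives, and the comparison of the Kontsevich orientation $\det\R\,\mathrm{IV}(G)\otimes\bigotimes_{e}\det\R H(e)$ with the cell orientation $\det\R\,\mathrm{IE}(G)$—both of which are genuinely delicate and are carried out in \cite{CV}, so deferring them is acceptable for a sketch but they are not cosmetic.
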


To use the map~(\ref{eqn:abelianization}) to find classes in $H_*(\Mod(g,s))$ we must now compute $PH^*_c(\lplus_\infty^{\text{ab}})^{\SP}$.
By Theorem~\ref{thm:associative} we have that $\lplus_V^{\rm ab}\cong W_1\oplus W_2$ where
$W_1=[V^{\otimes 3}]_{\Z_3}$ and  $W_2=(\ext^2 V)/\F$.
In~\cite{Morita2}  Morita calculated that if $\dim V \gg k$,
$$
P\left(\ext^k W_2\right)^{\SP}=\left(\ext^k W_2\right)^{\SP}\cong \begin{cases}
\F &\text{if } k\equiv 1\mod 4 \text{ and } k\geq 5\\
0&\text{otherwise}
\end{cases}.
$$
Since the result of the calculation is independent of $V$ we can take duals on the finite level and conclude
that $PH^{4r+1}_c(\lplus_\infty^{\text{ab}})^{\SP}$ contains a copy of $\F$ for each  $r\geq 1$.
Applying the map~(\ref{eqn:abelianization}) now gives a cocycle in $PH_c^{4r+1}(\lplus_\infty)^{\SP}$ for each $r\geq 1$, which corresponds via Kontsevich's theorem to a cycle in $H_{4r+1}(\operatorname{Mod}(1,4r+1))$.  In~\cite{Jim} it was shown that all of these cycles in fact represent non-trivial homology classes.

We have only used the degree $2$ piece $W_2$ of the abelianization to construct these homology classes.  Using $W_1$ as well  we can construct  many more cycles; for example it is easy to compute that $\left[(\ext^2 W_1)\otimes (\ext^2 W_2)\right]^{\SP}\neq 0$, giving   $2$-dimensional cycles for $\Mod(1,3)$ and $\Mod(2,1)$.  However,   we do not know whether these cycles are non-trivial in   homology.  In the sequel to this paper we will show how to produce  cycles on moduli space (of any genus) by using classes in $H_k(\hairy)$ for $k>1$, potentially yielding even more unstable homology classes.

\subsection{The outer automorphism group $\Out(F_n)$}
Again we set $V_n=\F^{2n}$ with the standard symplectic form, $V_\infty=\dirlim \F^{2n}$, and  write $\lplus_n$ for $\lplus_{V_n}$ and $\lplus_\infty=\lplus_{V_\infty}$.  Kontsevich's theorem for $\cO=\Lie$ reads:

\begin{theorem}\cite{CV,Ko1,Ko2}\ For $\cO=\Lie$,
$PH_c^k(\lplus_\infty)^{\SP}$ is non-zero only in even degrees $2d$, in which case
$$
PH_c^k(\lplus_\infty)^{\SP}\degree{2d}\cong  H_{2d-k}(\Out(F_{d+1});\F).
$$
\end{theorem}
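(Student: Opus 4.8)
The plan is to follow Kontsevich's original argument, which factors through three incarnations of one chain complex: the $\SP$-invariant part of the Chevalley--Eilenberg complex of $\lplus_\infty$ for $\cO=\Lie$, the complex of connected Lie graphs, and the $\Out(F_{d+1})$-equivariant cochain complex of the spine of Outer space. The passage between the last two is precisely the hairless, trivial-coefficient specialization of the argument already carried out in the proof of Theorem~\ref{thm:hairylie}.

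\textbf{Step 1 (from $\lplus$ to graphs).} For finite-dimensional $V$, the cochain group $C_c^k\lplus\degree{2d}=\big(\Wedge^k\lplus_V\degree{2d}\big)^*$ is dual to a $\GL(V)$-module which is a summand of a tensor power $V^{\otimes(2d+2k)}$ decorated by the operad $\Lie$ and by the symmetric groups permuting spiders and legs. By the first fundamental theorem of invariant theory for the symplectic group, the $\SP(V)$-invariants of such a module are spanned by complete pairings of the tensor slots via $\omega$; by the second fundamental theorem the only relations among these pairings are ``Pfaffian'' relations which disappear once $\dim V\gg d$, so the invariant cochains stabilize to a space independent of $V$ (this also shows the inverse system in Definition~\ref{cont} is eventually constant, so no $\varprojlim^1$ term appears). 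Dualizing, the stable $\SP$-invariant cochains are spanned by \emph{Lie graphs}: ordered disjoint unions of $k$ positive-degree Lie spiders of total degree $2d$ with \emph{all} of their legs paired off by oriented edges, taken modulo orientation, antisymmetry and IHX relations; and the Chevalley--Eilenberg differential becomes the edge-contraction (spider-fusing) differential. The $\SP$-invariant cohomology is a graded-commutative Hopf algebra under disjoint union of graphs and is free on the connected graphs (Milnor--Moore), so $PH_c^*(\lplus_\infty)^{\SP}$ is computed by the complex of \emph{connected} Lie graphs. Finally, a graph all of whose legs are paired has an even number of legs, and that number equals (degree) $+\,2k$; hence the degree is automatically even, giving the asserted vanishing of $PH_c^k(\lplus_\infty)^{\SP}\degree{e}$ for $e$ odd.

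\textbf{Step 2 (from graphs to $\Out(F_{d+1})$).} Drawing out each Lie spider as a planar unitrivalent tree turns a connected Lie graph assembled from $k$ spiders of total degree $2d$ into a connected trivalent graph $\hat\G$ with $2d$ internal vertices and first Betti number $d+1$, equipped with a distinguished spanning forest $\Phi$ (the union of the spider-trees) with $k$ components, hence with $2d-k$ edges. I would identify the connected Lie graph complex with one computing $H^*(\Out(F_{d+1});\F)$ exactly as in the proof of Theorem~\ref{thm:hairylie}: apply Proposition~\ref{brown} to the action of $\Out(F_{d+1})$ on the contractible spine $K_{d+1}$ of Outer space with coefficients $\F$, so that $C^*(K_{d+1})\otimes_{\Out(F_{d+1})}\F$ computes $H^*(\Out(F_{d+1});\F)$; its cells are marked graphs with a forest, the coboundary splits as $\delta_E+\delta_C$, collapsing the acyclic $\delta_C$-direction imposes exactly the IHX relations and reduces the complex to connected Lie graphs, and under this dictionary $(\hat\G,\Phi)$ is a cell of dimension $2d-k$ while the edge-contraction differential becomes $\delta_E$. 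Therefore $H_k$ of the connected degree-$2d$ Lie graph complex is $H^{2d-k}(\Out(F_{d+1});\F)$, and since $H_c^*$ is by construction the dual of Chevalley--Eilenberg homology, universal coefficients over the field $\F$ give
\[
PH_c^k(\lplus_\infty)^{\SP}\degree{2d}\ \cong\ \big(H^{2d-k}(\Out(F_{d+1});\F)\big)^*\ \cong\ H_{2d-k}(\Out(F_{d+1});\F).
\]
As $k$ ranges over $1,\dots,2d$ this matches the range $0\le 2d-k\le 2d-1=\mathrm{vcd}\,\Out(F_{d+1})$.

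\textbf{Main obstacle.} The delicate point is Step 1: proving that the $\SP$-invariants genuinely stabilize and, in the stable range, are \emph{freely} spanned by complete $\omega$-contractions, with the operad decorations and (anti)symmetrizations producing no coincidences beyond the antisymmetry and IHX relations of $\Lie$. Kontsevich's treatment of this is terse, so I would supply the details using the classical first and second fundamental theorems for $\SP(V)$, keeping careful track of the orientation and sign conventions needed to make the Chevalley--Eilenberg differential agree with the edge-contraction differential --- the same sign bookkeeping that is built into $\bdry_\hairy$ in Section~\ref{sec:hairy_graphs}.
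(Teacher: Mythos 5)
The paper does not prove this statement -- it is Kontsevich's theorem, imported from the cited references (in particular \cite{CV}, where the detailed proof lives) -- and your outline reconstructs exactly that argument: symplectic invariant theory identifying the stable $\SP$-invariant Chevalley--Eilenberg complex with the connected Lie graph complex (with the parity of complete $\omega$-pairings forcing even degree), followed by the forested-graph/spine-of-Outer-space identification, which is precisely the hairless, trivial-coefficient case of the proof of Theorem~\ref{thm:hairylie} via Proposition~\ref{brown}. Your numerology checks out (rank $d+1$, forest with $2d-k$ edges giving a cell of dimension $2d-k$, the range matching $\mathrm{vcd}=2d-1$), and your identification of the invariant-theory stabilization as the step needing the real work is the same point where \cite{CV} invests its effort, so the proposal is correct and follows essentially the same route as the source the paper relies on.
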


Following the abelianization map with the trace map yields Lie algebra morphisms
$$\lplus_n\to \lplus_n^{\text{ab}}\hookrightarrow H_1(\hairy_n)$$
where both $\lplus_n^{\text{ab}}$ and $H_1(\hairy_n)$ are thought of as abelian Lie algebras, graded by degree.
In degree $d$, these maps induce backwards maps
$$
\sum_{d_1+\ldots+d_k=d} H_1(\hairy_n\degree{d_1})^*\wedge\ldots\wedge H_1(\hairy_n\degree{d_k})^* \to H^*(\lplus_n)\degree{d},
$$
using  the fact that $H^*(\mathfrak a)=\ext \mathfrak a^*$ for finite-dimensional abelian Lie algebras $\mathfrak a$.
Taking the primitive part of the $\SP$-invariants and letting $n$ go to infinity yields a map
$$
\mu\colon \invlim P(\sum_{d_1+\ldots+d_k=d} H_1(\hairy_n\degree{d_1})^*\wedge\ldots\wedge H_1(\hairy_n\degree{d_k})^*)^{\SP} \to PH_c^*(\lplus_\infty)^{\SP}\degree{d}.
$$
Thus by combining elements of the first homology of the hairy graph complex, we obtain cocycles in $PH_c^*(\lplus_\infty)^{\SP}\degree{d}$, which by Kontsevich's theorem can be identified with cycles in $H_{2d-k}(\Out(F_{d+1});\F)$. We illustrate this with two concrete examples below.

\subsubsection{Morita's original cycles}  Morita's original series of cycles was constructed from elements of  $\lplus^{\text{ab}}_V$ in degree $d=2k-1$.  When pushed by the trace into hairy graph homology, these correspond to $H_{1,1}(\hairy_V)\degree{2k-1} \cong S^{2k-1}V=\SF{2k-1}V$. In hairy graph homology,   generators of $H_{1,1}(\hairy_V)\degree{2k-1}$ are represented by   oriented loops with $2k-1$ hairs attached, labeled by elements of $V$.

A straightforward computation shows that $W_{n,2k-1}:= [(\SF{2k-1}V_n)\wedge (\SF{2k-1}V_n)]^{\SP}\cong \F$ for large enough $n$.
The   generator of $W_{n,2k-1}$ corresponds to two hairy loops, with the hairs on one paired with  the hairs on the other; in particular  the hair labels have disappeared and the generator is independent of $V_n$. Since this graph is connected, it represents a primitive class. Since in a Hopf algebra the dual to the submodule of primitives is primitive with respect to the dual Hopf algebra structure, we get
$W^*_{n,2k-1}\subset
P\left[ (S^{2k-1}V_{n})^*\wedge (S^{2k-1}V_{n})^*\right]^{\SP}$.
Let $W^*_{2k-1}:= \invlim W_{n,{2k-1}}^*$.
The image of the generator of $W^*_{2k-1}$ under the map $\mu$ above is in $PH_c^2(\lplus_\infty)^{\SP}$ and under Kontsevich's theorem corresponds to the $k$-th Morita class, in $H_{4k-4}(\Out(F_{2k});\F)$. See~\cite{CVMorita} for more details on these and other classes arising from the rank one part of the abelianization.

\subsubsection{New classes from cusp forms}\label{sec:newcusp}

Recall that $H_1(\hairy_V)\supset H_{1,2}(\hairy_V)\supset (\SF{(k,l)}V)^{\lambda_{k,l}}$.  We will use the piece with   $(k,l)=(2m,0)$  to construct new cohomology classes in $PH_c^2(\lplus_\infty)^{\SP}$.  In this case the exponent $\lambda_{(m,0)}$ is equal to $s_{2m+2}$, the dimension of the space $M^0_{2m+2}$ of cusp forms of weight $2m+2$, and in fact we have

$$
(\SF{(2m,0)}V)^{2m+2}=M^0_{2m+2}\otimes \SF{(2m,0)}V=M^0_{2m+2}\otimes S^{2m} V.
$$

\begin{lemma}\label{lemma:SpCalculation}
$\left[ (M^0_{2m+2}\otimes \SF{2m} V)\wedge (M^0_{2m+2}\otimes \SF{2m} V)\right]^{\SP}$ is isomorphic to $\ext^2(M^0_{2m+2})$.
\end{lemma}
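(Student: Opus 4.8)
The plan is to reduce the statement to the decomposition of a second exterior power of a tensor product, combined with the fact that $\SF{2m}V=S^{2m}V$ is an irreducible, self-dual $\SP(V)$-module carrying a \emph{symmetric} invariant bilinear form.

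First I would abbreviate $M=M^0_{2m+2}$ (a vector space with trivial $\SP$-action) and $S=\SF{2m}V=S^{2m}V$, so the space in question is $\left[\ext^2(M\otimes S)\right]^{\SP}$. The standard natural isomorphism
$$
\ext^2(A\otimes B)\cong\bigl(\ext^2 A\otimes S^2 B\bigr)\oplus\bigl(S^2 A\otimes\ext^2 B\bigr),
$$
which comes from decomposing the diagonal $\Sigma_2$-action on $(A\otimes B)^{\otimes 2}=A^{\otimes 2}\otimes B^{\otimes 2}$, specializes to
$$
\ext^2(M\otimes S)\cong\bigl(\ext^2 M\otimes S^2 S\bigr)\oplus\bigl(S^2 M\otimes\ext^2 S\bigr).
$$
Since $\SP(V)$ acts only on the $S$-factors, taking invariants gives
$$
\left[\ext^2(M\otimes S)\right]^{\SP}\cong\ext^2 M\otimes (S^2 S)^{\SP}\;\oplus\;S^2 M\otimes(\ext^2 S)^{\SP},
$$
so it suffices to identify $(S^2 S)^{\SP}$ and $(\ext^2 S)^{\SP}$.

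Next I would use two standard facts about $S=S^{2m}V$ as an $\SP(V)$-module (in the stable range, i.e. for $\dim V$ large, which is the relevant case): it is irreducible — the single-row Schur functor $\SF{(k)}V=S^kV$ has no symplectic trace to remove, since $\omega$ vanishes on $S^2V$ — and the symplectic form gives an $\SP(V)$-isomorphism $V\cong V^*$, hence $S\cong S^*$. By Schur's lemma the space of $\SP(V)$-invariant bilinear forms on $S$, namely $(S\otimes S)^{\SP}\cong\Hom_{\SP}(S^*,S)\cong\Hom_{\SP}(S,S)$, is one-dimensional. The only genuine computation is to decide whether this invariant form lies in $S^2S$ or in $\ext^2S$, i.e. whether it is symmetric or alternating. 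Writing the induced form on $S^{2m}V$ as $B(v_1\cdots v_{2m},\,w_1\cdots w_{2m})=\sum_{\sigma\in\Sigma_{2m}}\prod_i\omega(v_i,w_{\sigma(i)})$ and interchanging its two arguments introduces a factor $(-1)^{2m}=1$ (the same count gives $-1$ for an odd symmetric power), so $B$ is symmetric. Hence the one-dimensional space $(S\otimes S)^{\SP}$ sits inside $S^2S$, giving $(S^2S)^{\SP}\cong\F$ and $(\ext^2S)^{\SP}=0$.

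Substituting into the last display yields $\left[\ext^2(M\otimes S)\right]^{\SP}\cong\ext^2 M\otimes\F\cong\ext^2 M=\ext^2(M^0_{2m+2})$, as claimed. The main obstacle is precisely the symmetry-versus-antisymmetry determination for the invariant form on $S^{2m}V$; once that sign is fixed the rest is formal — the tensor-product decomposition, Schur's lemma, and self-duality of $\SP(V)$-modules. One must also cite the irreducibility of $S^kV$ over $\SP(V)$ to justify the use of Schur's lemma.
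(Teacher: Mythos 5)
Your proof is correct and is essentially the paper's argument in a different order: the paper first computes $[U\otimes U]^{\SP}$ (one-dimensional over $\F^s\otimes\F^s$, generated by $\omega^{2m}$, by classical invariant theory) and then takes the sign-twisted $\Sigma_2$-invariants, tracking the sign via $\omega\mapsto-\omega$, which is exactly your observation that the invariant pairing on $S^{2m}V$ is symmetric because $(-1)^{2m}=1$. Your route of decomposing $\ext^2(M\otimes S)$ into $\Sigma_2$-isotypic pieces before taking $\SP$-invariants, with Schur's lemma plus self-duality replacing the explicit generator $\omega^{2m}$, is just a repackaging of the same two key facts.
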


\begin{proof}
Let $U=M^0_{2m+2}\otimes \SpF{2m} V=\F^{s} \otimes \SpF{2m}V$. In order to compute $\left[\ext^2 U\right]^{\SP}$, we first compute $[U\otimes U]^{\SP}$, and then divide by the alternating $\Z_2$-action.
But notice that
$$
\left[ U\otimes U\right]^{\SP}\cong
(\F^s\otimes \F^s)\otimes \left[\SpF{2m} V\otimes \SpF{2m} V\right]^{\SP}\cong
(\F^s\otimes \F^s)\otimes \F,
$$
since by classical invariant theory, $\left[\SpF{2m} V\otimes \SpF{2m} V\right]^{\SP}$ is one-dimensional, generated by $\omega^{2m}$ for $\omega=\sum (p_i\otimes q_i- q_i\otimes p_i)$.

Now to calculate $\left[\ext^2 U\right]^{\SP}$, we take the $\mathbb Z_2$ invariants. $\mathbb Z_2$ acts on the four-fold tensor product $\F^s\otimes \F^s\otimes \SpF{2m} V\otimes \SpF{2m} V$ by the rule $a\otimes b\otimes c\otimes d\mapsto -b\otimes a\otimes d\otimes c$. Swapping the tensor factors of $\omega$ sends it to $-\omega$, so the $\mathbb Z_2$ action on the invariants is $$v\otimes w \otimes \omega^{2m}\mapsto -w\otimes v\otimes (-\omega)^{2m}$$
 Thus, we get
$[(\F^s\otimes \F^s)\otimes \F]^{\Z_2}=\ext^2(\F^s)=\ext^2(M^0_{2m+2})$.
\end{proof}

Generators of   $\ext^2(M^0_{2m+2})$  are represented in hairy graph homology by two rank two hairy graphs with $2m$ hairs each; the hairs on one are paired with the hairs on the other, resulting in a connected graph of rank $2m+3$ (and degree $4m+4$).  So we get
$$\ext^2(M^0_{2m+2})^*\subset P\left[H_1(\hairy_n\degree{2m+2})^*\wedge H_1(\hairy_n\degree{2m+2})^*\right]^{\SP}.$$
Since this is independent of $n$, applying the map $\mu$ together with Kontsevich's theorem yields the following result.
\begin{theorem}\label{thm:symmod}
There is an injection $\ext^2\left(M^0_{2k}\right)^*\hookrightarrow Z_{4k-2}(\Out(F_{2k+1});\F)$ into cycles for $\Out(F_{2k+1})$.
\end{theorem}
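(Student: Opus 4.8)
The plan is to assemble the theorem from pieces already in hand. The inputs are: Theorem~\ref{thm:modular}, which inserts the cusp-form data into $H_{1,2}(\hairy)$; Theorem~\ref{thm:surjective}, used to see that the relevant summand lies in the image of $\Tr_*$ and hence in $\lplus^{\text{ab}}_\infty$; Lemma~\ref{lemma:SpCalculation}, the symplectic invariant-theory computation; the map $\mu$ of the preceding subsection, built by dualizing $\lplus_n\to\lplus^{\text{ab}}_n\hookrightarrow H_1(\hairy_n)$ and passing to exterior algebras, $\SP$-invariants, primitives, and the inverse limit; and Kontsevich's theorem for $\cO=\Lie$ identifying $PH^*_c(\lplus_\infty)^{\SP}$ with the homology of $\Out(F_n)$. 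The idea is simply to feed a wedge of two cusp-form-indexed abelianization classes of degree $2k$ into $\mu$ and read the output off as a cycle for $\Out(F_{2k+1})$ in dimension $4k-2$.

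In detail: by Theorem~\ref{thm:modular} the partition $(2k-2,0)$ (for which $k+\ell=2k-2$ is even and $\ell=0$) contributes the summand $(\SF{(2k-2,0)}V)^{\oplus s_{2k}}\cong M^0_{2k}\otimes S^{2k-2}V$ to $H_{1,2}(\hairy)$, in hair-degree $2k-2$, i.e. in graph degree $2k$. By Theorem~\ref{thm:surjective}, together with the fact that $H_*(\hairy^+_\infty)$ generates $H_*(\hairy_\infty)$ as a $\GL$-module, the corresponding symplectic summand $M^0_{2k}\otimes\SpF{(2k-2)}V$ lies in $\im\Tr_*$; since $\Tr_*$ is injective (Theorem~\ref{th:Tr_injection}) this exhibits $M^0_{2k}\otimes\SpF{(2k-2)}V$ as a direct summand of $\lplus^{\text{ab}}_\infty\degree{2k}=H_1(\lplus_\infty)\degree{2k}$, hence of $\lplus^{\text{ab}}_n\degree{2k}$ for $n\gg k$. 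Lemma~\ref{lemma:SpCalculation} (with $2m+2=2k$) then identifies $\bigl[(M^0_{2k}\otimes\SpF{(2k-2)}V)\wedge(M^0_{2k}\otimes\SpF{(2k-2)}V)\bigr]^{\SP}$ with $\ext^2(M^0_{2k})$, a direct summand of $\bigl[\ext^2\lplus^{\text{ab}}_n\degree{4k}\bigr]^{\SP}$. Dualizing: because $\lplus_n\twoheadrightarrow\lplus^{\text{ab}}_n$ is surjective and $\lplus^{\text{ab}}$ carries the zero bracket, the induced cochain map sends $(\ext^2\lplus^{\text{ab}}_n\degree{4k})^*$ injectively into the Chevalley--Eilenberg cocycles of $(\ext^2\lplus_n\degree{4k})^*=C^2_c(\lplus_n)\degree{4k}$; restricting to $\SP$-invariants and to the primitive (connected-graph) part and passing to the inverse limit over $n$ --- trivial here since all hairs have been paired off --- yields an injection $\ext^2(M^0_{2k})^*\hookrightarrow PC^2_c(\lplus_\infty)^{\SP}\degree{4k}$ into cocycles, which is exactly $\mu$ restricted to this summand. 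Finally, by Kontsevich's theorem for $\cO=\Lie$ the cochain group $PC^2_c(\lplus_\infty)^{\SP}\degree{4k}$ is identified, compatibly with differentials, with the chain group $C_{4k-2}(\Out(F_{2k+1});\F)$ arising from the spine of Outer space (here $2d=4k$, so the rank is $d+1=2k+1$ and the homological degree is $2d-2=4k-2$), so that cocycles correspond to cycles; composing gives the asserted injection $\ext^2(M^0_{2k})^*\hookrightarrow Z_{4k-2}(\Out(F_{2k+1});\F)$.

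The genuinely routine parts are the degree bookkeeping (a rank-$2$ hairy Lie graph with $h$ hairs has degree $h+2$, and pairing the $2k-2$ hairs of two such graphs produces a connected graph of rank $2k+1$ and degree $4k$) and the Schur--Weyl duality and classical invariant theory already packaged in Lemma~\ref{lemma:SpCalculation}. The step needing the most care, and the one I would flag as the main obstacle, is the appeal to Theorem~\ref{thm:surjective}: one must know that the \emph{entire} symplectic irreducible $\SpF{(2k-2,0)}V$ --- not merely its highest-weight piece in $H_*(\hairy^+_\infty)$ --- lies in $\im\Tr_*$, so that $\ext^2(M^0_{2k})^*$ really does pull back along $\lplus_\infty\to\lplus^{\text{ab}}_\infty$ to an honest injection into continuous cochains; this is precisely what Theorem~\ref{thm:surjective} provides, using the $\SP$-equivariance of $\Tr_*$.
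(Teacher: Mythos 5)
Your architecture is the paper's: extract the cusp-form summand from Theorem~\ref{thm:modular}, apply Lemma~\ref{lemma:SpCalculation}, push through the map $\mu$, and invoke Kontsevich's theorem; your rank/degree bookkeeping (hair count $2k-2$, graph degree $2k$, glued graph of rank $2k+1$ and degree $4k$, landing in $Z_{4k-2}$) agrees with the paper. The genuine gap is the step you flag and then declare settled: the claim that Theorem~\ref{thm:surjective} shows the whole summand $M^0_{2k}\otimes\SpF{2k-2}V$ lies in $\im\Tr_*$, hence is a direct summand of $\lplus^{\text{ab}}_\infty\degree{2k}$ (and of $\lplus^{\text{ab}}_n$ for $n\gg k$). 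Theorem~\ref{thm:surjective} only says that $p_*\circ\Tr_*$ surjects onto $H_*(\hairy^+_\infty)$: given $z\in H_*(\hairy^+_\infty)$ it produces $c$ with $p_*\Tr_*(c)=z$, but $\Tr_*(c)$ may differ from $z$ by terms killed by $p_*$, and since $\hairy^+_\infty$ is not an $\SP$-invariant subspace of $\hairy_\infty$, $\SP$-equivariance of $\Tr$ does not upgrade this to $\SP\cdot H_*(\hairy^+_\infty)\subseteq\im\Tr_*$. The paper itself draws only the conclusion that the image of $\Tr_*$ is ``large,'' and explicitly defers the statement you are using (that $\im\Tr_*$ coincides with $\SP(V)\cdot H_*(\hairy_{V^+})$, i.e.\ contains each $\SpF{\lambda}$-isotypic piece matching the $\GL$-decomposition) to the sequel \cite{CKV2}. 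Your further descent from $\infty$ to finite $n$ is also not free, since $\Tr_n$ is not known to be injective for finite $n$.

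The paper's own proof never needs to know which part of $H_1(\hairy)$ comes from $\lplus^{\text{ab}}$: the map $\mu$ is defined on (primitives of $\SP$-invariants of) wedges of the \emph{duals} $H_1(\hairy_n\degree{2k})^*$, and one pulls functionals back along the Lie algebra morphisms $\lplus_n\to\lplus_n^{\text{ab}}\to H_1(\hairy_n)$; such pullbacks are automatically cocycles, and Kontsevich's identification converts them into cycles in $Z_{4k-2}(\Out(F_{2k+1});\F)$ with no appeal to Theorem~\ref{thm:surjective}. The inclusion $\ext^2(M^0_{2k})^*\subset P[H_1(\hairy_n\degree{2k})^*\wedge H_1(\hairy_n\degree{2k})^*]^{\SP}$ needs only Theorem~\ref{thm:modular} and Lemma~\ref{lemma:SpCalculation}. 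Your instinct that the cusp-form functionals must not die on the image of the trace is pointing at a real issue (it is what underlies nonvanishing/injectivity questions), but attributing its resolution to Theorem~\ref{thm:surjective} is where your argument breaks: to repair it, either run the construction on the dual side as the paper does, or supply the image computation you are quoting, which is not proved in this paper.
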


The first $M^0_{2k}$ with dimension at least $2$ occurs when $k=12$, yielding a cycle in $Z_{46}(\Out(F_{25}))$. This is well beyond the range in which we can compute whether this is a nonzero homology class.

\subsection{The automorphism group $\Aut(F_n)$}\label{subsec:autclasses}  Kontsevich's theorems for mapping class groups and outer automorphism groups of free groups have been adapted by Gray~\cite{Gray} to yield information about the homology of automorphism groups of free groups.  The basic modification needed in hairy graph homology is to add a distinguished hair which marks a basepoint for the graph.  To keep track of the internal vertex adjacent to the distinguished hair, we think of the operad element coloring the vertex as a coefficient (with a distinguished vertex).  This complicates the algebra somewhat, as we now explain.

Let $L_V$ denote the submodule of the free Lie algebra on $V$ spanned by elements of degree at least $2$. Then $\lplus_V$ acts on $L_V$ by derivations, and we can form the homology groups $H_*(\lplus_V; L_V)$.  The homology $H_*(\lplus_V; L_V)$ is not a Hopf algebra, but it is a \emph{Hopf module} over $H_*(\lplus_V)$, where  in general $M$ is said to be a Hopf module over the Hopf algebra $H$ if there are maps
$H\otimes M\to M$ (this is the module structure) and $M\to H\otimes M$ (this is the comodule structure) satisfying various compatibility axioms. The Hopf module structure for  $H_*(\lplus_V; L_V)$ is defined similarly to the structure for homology with trivial coefficients as detailed in \cite{CV}; we refer to \cite{Gray} for details.

Primitives in a Hopf module are defined to be solutions of the equation $\Delta(x)=1\otimes x$, where $\Delta$ is the coaction. Just as in the case of Hopf algebras, the dual of a Hopf module is also a Hopf module, and primitives get sent to primitives when taking duals. As in the case of trivial coefficients, primitivity translates to connectedess of graphs on the graph homology level.

\begin{theorem}\label{authm}\cite{Gray} For $\cO={\cL}ie,$
 $$PH_k(\lplus_\infty; L_\infty)^{\SP}\cong \bigoplus_{r\geq 2} H^{2r-1-k}(\Aut(F_r);\Q),$$where $\lplus_\infty$ acts on $L_\infty$ by derivations.
\end{theorem}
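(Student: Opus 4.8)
The plan is to follow Kontsevich's argument for $\Out(F_n)$, as presented in~\cite{CV}, with two modifications: the trivial module is replaced by the $\lplus_\infty$-module $L_\infty$, and the spine of Outer space is replaced by the spine of Hatcher--Vogtmann's \emph{Auter space}. First I would write down the Chevalley--Eilenberg complex computing $H_*(\lplus_V;L_V)$, namely $\bigwedge\lplus_V\otimes L_V$ with differential the sum of the ordinary Chevalley--Eilenberg differential on $\bigwedge\lplus_V$ and the term recording the derivation action of $\lplus_V$ on $L_V$. A generator is a wedge of spiders together with one element of $L_V$; since $L_V$ is spanned by rooted planar binary trees with leaves labeled by elements of $V$ modulo IHX and antisymmetry, such a generator is a graphical object built from finitely many internal vertices together with one rooted tree, all of whose leg- and leaf-labels lie in $V$ while the \emph{root} of the tree carries no label. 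As in the trivial-coefficient case this complex decomposes into pieces built from finitely many tensor powers of $V$, and the first fundamental theorem of invariant theory for $\SP(V)$ identifies the $\SP(V)$-invariants (for $\dim V$ large relative to the degree) with a span of graphs: the $\omega$-pairings match up all the $V$-labels on the spider legs and the tree leaves, each matched pair becoming an edge, while the unmatched root of the $L_V$-tree becomes a distinguished unlabeled half-edge, i.e.\ a \emph{basepoint}. Passing to the primitive part of the resulting Hopf module singles out the connected such graphs, and after stabilizing $V\to\infty$ all former $V$-labels have disappeared, so that $PH_k(\lplus_\infty;L_\infty)^{\SP}$ is computed by a complex of connected \emph{pointed} oriented Lie graphs, graded by the rank $r$ of the underlying graph.

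The second step is to identify, for each $r\geq 2$, this pointed graph complex with a complex computing $H^*(\Aut(F_r);\Q)$. The relevant model is the spine $A_r$ of Auter space: a contractible complex of dimension $2r-2$ --- one larger than the dimension of the spine $K_r$ of Outer space, the extra dimension reflecting the choice of basepoint --- whose cells are marked pointed graphs $(G,\Phi,g)$ carrying a subforest $\Phi$, and on which $\Aut(F_r)$ acts with finite stabilizers (Hatcher--Vogtmann). By exactly the argument used for Proposition~\ref{brown} and in the proof of Theorem~\ref{thm:hairylie} --- the spectral sequence of the action collapses to the row $q=0$ because the cell stabilizers are finite and $\Q$ has characteristic $0$ --- the cellular cochain complex $C^*(A_r)\otimes_{\Aut(F_r)}\Q$ computes $H^*(\Aut(F_r);\Q)$. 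Quotienting by the image of the vertex-expansion coboundary $\delta_C$ turns the top-dimensional part into marked pointed trivalent graphs modulo IHX relations, with remaining differential the edge-to-forest coboundary $\delta_E$; collapsing the contractible marking data then gives precisely the rank-$r$ part of the pointed graph complex above, with $\delta_E$ matching its boundary operator. Tracking the number of forest edges through this identification produces the degree shift in the statement, so that a rank-$r$ graph built from $k$ spiders lands in $H^{2r-1-k}(\Aut(F_r);\Q)$; summing over $r\geq 2$ gives the theorem.

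The main obstacle, as in Kontsevich's original argument, is bookkeeping rather than any single deep ingredient. One must pin down the Hopf-\emph{module} structure on $H_*(\lplus_V;L_V)$ precisely enough to be sure that ``primitive'' corresponds exactly to ``connected'' and that dualizing sends primitives to primitives, and --- more delicately --- one must reconcile the orientation and sign conventions of the coefficient-twisted Chevalley--Eilenberg differential with the cellular coboundary on $A_r$, passing through the intermediate pointed-graph model: the root half-edge and the hanging $L_V$-tree introduce extra signs that have to be matched against the ordering of forest edges that orients the cells of $A_r$. A further point requiring care is the indexing: because the $L_V$-factor contributes its own variable internal degree, the ``degree $d$'' decomposition used throughout this paper no longer refines the complex, so one must check that the rank $r$ together with the homological degree $k$ are the correct invariants, and that the right-hand side is therefore indexed simply by $r\geq 2$. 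As a consistency check one can compare the outcome with the long exact sequence arising from $1\to F_r\to\Aut(F_r)\to\Out(F_r)\to 1$ together with Kontsevich's theorem for $\Out(F_r)$.
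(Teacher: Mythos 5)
This statement is not proved in the paper: it is quoted from Gray's thesis \cite{Gray}, so there is no in-paper argument to compare your proposal against. That said, your outline is the natural one and matches the strategy the paper itself uses for the closely related Theorem~\ref{thm:Aut-Out} (the $s=1$ case of $\Gamma_{n,s}$): compute $H_*(\lplus_V;L_V)$ from the Chevalley--Eilenberg complex $\bigwedge\lplus_V\otimes L_V$, use invariant theory for $\SP(V)$ in the stable range to convert $\SP$-invariants into a pointed Lie graph complex in which the unlabeled root of the coefficient tree becomes a basepoint hair, identify primitives with connected graphs, and then compare the rank-$r$ part with the quotient of the cellular cochain complex of the spine of Auter space (contractible of dimension $2r-2$, finite stabilizers, so Proposition~\ref{brown} applies), with $\delta_C$ absorbing the IHX relations and $\delta_E$ matching the graph boundary, exactly as in the proof of Theorem~\ref{thm:hairylie}. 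The one concrete slip is in your degree bookkeeping: a generator of $\bigwedge^k\lplus_V\otimes L_V$ glues up to a graph with $k+1$ vertices (the $k$ spiders plus the coefficient vertex), and a trivalent pointed graph of rank $r$ has $2r-1$ vertices; for these top graphs to land in $H^0(\Aut(F_r);\Q)$, as they must, the index $k$ in Gray's statement has to count \emph{all} vertices of the glued graph, coefficient vertex included. This is consistent with the paper's later use of the dual statement, where the class $e_{4k-1}$ of Theorem~\ref{thm:autclasses}, built from one exterior factor and one coefficient factor, sits in $PH^2_c(\lplus_\infty;L_\infty^*)^{\SP}$ and corresponds to a cycle in degree $2r-1-2$ for $r=2k+1$. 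So your phrase ``a rank-$r$ graph built from $k$ spiders lands in $H^{2r-1-k}$'' is off by one; otherwise the proposal is the expected adaptation of the Kontsevich--Conant--Vogtmann argument, and the remaining points you flag (signs/orientations against the ordering of forest edges, primitives dualizing to primitives in the Hopf module) are indeed the places where care is needed rather than gaps in the strategy.
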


We want to use the dual form of this isomorphism, so we pause to introduce some notation. Let $M_n=\oplus M_n\degree{d}$ be a graded vector space where each graded summand is finite dimensional. Suppose
$\cdots\to M_{n}\to M_{n+1}\to M_{n+2}\cdots$
is a sequence of graded linear maps. Define $M_\infty^*$ to be the graded dual of $\lim_{n\to\infty} M_n$, i.e.
$$
M_\infty^*:=  \bigoplus_{d}{\invlim}M_n\degree{d}^* .
$$
The dual statement of Theorem~\ref{authm} is then
$$
PH_c^k(\lplus_\infty;L_\infty^*)^{\SP}\cong \bigoplus_{r\geq 2} H_{2r-1-k}(\Aut(F_{r});\Q).
$$

The action of $\lplus_\infty$ on $L_\infty$ induces an action of $\lplus_\infty^{\text{ab}}$ on
$\overline{L}_\infty=L_\infty/(\lplus_\infty\cdot L_\infty)$.
Abelianization $\lplus_\infty\to\lplus_\infty^{ab}$
then induces a backwards map on continuous cohomology  $$
H_c^*(\lplus_\infty^{\text{ab}}; \overline{L}_\infty^*)\to H_c^*(\lplus_\infty;L_\infty^*),
$$
where we emphasize that
$L_\infty^*:=\displaystyle \bigoplus_d {\invlim} L_{V_n}^*\degree{d}$ and similarly
$\overline{L}_\infty^*:=\displaystyle \bigoplus_d {\invlim} \overline{L}_{V_n}^*\degree{d}$.
Taking $\SP$-invariants and then primitives gives
$$PH_c^*(\lplus_\infty^{\text{ab}}; \overline{L}_\infty^*)^{\SP} \to PH_c^*(\lplus_\infty;L_\infty^*)^{\SP}$$

We now want to relate the domain of this map to hairy graph homology.    To this end,   let $V'$ be the vector space generated by $V$ and an additional hyperbolic pair of vectors $b$ and $b^*$, and let $[\lplus_{V'}]^\flat$ be the subspace of $\lplus_{V'} $ spanned by spiders where the label $b$ appears exactly once and $b^*$ does not appear at all.  The subspaces $[\lplus_{V'}^{\text{ab}}]^\flat$ and $\hairy_{V'}^\flat$ are defined similarly.

\begin{lemma} The map $\beta\colon  L_V \to \lplus_{V'}$ which puts the label $b$ on the root induces a
surjection $\overline{L}_V\twoheadrightarrow[\lplus_{V'}^{\text{ab}}]^\flat$.
\end{lemma}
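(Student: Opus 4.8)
The plan is to unwind both sides of the claimed surjection explicitly in terms of spiders, check that $\beta$ is well-defined at the level of abelianizations, and then verify that every generating spider of $[\lplus_{V'}^{\text{ab}}]^\flat$ is hit. First I would recall that $L_V$, the part of the free Lie algebra on $V$ in degrees $\geq 2$, is spanned by rooted planar binary trees with leaves labeled by elements of $V$, modulo IHX and antisymmetry; the root is the distinguished output. Applying $\beta$ means attaching the label $b$ to this root leg, so the image is literally a spider in $\lplus_{V'}$ in which $b$ occurs exactly once (on the former root) and $b^*$ does not occur at all — that is, $\beta(L_V)$ lands in $[\lplus_{V'}]^\flat$ by construction. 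Composing with the abelianization $\lplus_{V'}\to\lplus_{V'}^{\text{ab}}$ and noting that $[\lplus_{V'}]^\flat$ maps onto $[\lplus_{V'}^{\text{ab}}]^\flat$ (since the ``$\flat$'' condition is preserved under brackets in the relevant way — a bracket contracting two legs cannot create or destroy the single $b$ unless it contracts $b$ against $b^*$, which is excluded), we get a map $L_V\to[\lplus_{V'}^{\text{ab}}]^\flat$.

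The key point is that this map is surjective and kills $\lplus_V\cdot L_V$, so it factors through $\overline{L}_V=L_V/(\lplus_V\cdot L_V)$. For surjectivity: a basic spider generating $[\lplus_{V'}^{\text{ab}}]^\flat$ is a planar unitrivalent tree one of whose leaves carries the label $b$ and none carries $b^*$; designating the $b$-leaf as the root and deleting the label $b$ produces exactly a rooted tree representing an element of $L_V$ (it has degree $\geq 1$; degree $0$ would be a bivalent situation, which does not arise among positive-degree spiders — I should double-check the degree bookkeeping here, see below), whose image under $\beta$ is the original spider. For the factorization through $\overline{L}_V$: the action of $\lplus_V$ on $L_V$ by derivations corresponds to grafting a positive-degree spider onto one of the non-root leaves via a symplectic contraction; after applying $\beta$ and abelianizing, such an element is a bracket in $\lplus_{V'}$ of two spiders (the grafted one, and the rest), hence dies in $\lplus_{V'}^{\text{ab}}$. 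Therefore $\beta$ induces the desired surjection $\overline{L}_V\twoheadrightarrow[\lplus_{V'}^{\text{ab}}]^\flat$.

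The main obstacle I anticipate is purely one of careful bookkeeping rather than of ideas: matching the degree conventions (spiders in $\lplus$ have degree $=\#\text{legs}-2\geq 1$, while $L_V$ was defined as the part of the free Lie algebra of degree $\geq 2$, where ``degree'' counts leaves) so that $\beta$ genuinely lands in the positive-degree subalgebra and no low-degree exceptional cases leak out; and verifying that the relation $\lplus_V\cdot L_V$ on the source maps precisely onto the image of the Lie bracket restricted to $[\lplus_{V'}]^\flat$ — i.e. that there are no ``extra'' brackets in $\lplus_{V'}^{\text{ab}}$, landing in the $\flat$-subspace, that fail to come from the $\lplus_V$-action on $L_V$. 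A bracket $[s_1,s_2]$ in $\lplus_{V'}$ with result in $[\lplus_{V'}]^\flat$ must have the single $b$ on exactly one of $s_1,s_2$ and must not contract $b$ or produce $b^*$; removing the contracted hyperbolic pair, this is visibly the derivation action of the $b$-free factor on the $b$-carrying one viewed in $L_V$. Once this correspondence is spelled out the surjectivity and the factorization are immediate, so I expect the proof to be short.
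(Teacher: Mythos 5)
Your proposal is correct and follows essentially the same route as the paper: the image of $\beta$ lands in $[\lplus_{V'}]^\flat$ by construction, surjectivity comes from reading a flat spider as a rooted tree with the $b$-leaf as root, and $\lplus_V\cdot L_V$ dies because the derivation action becomes a commutator in $\lplus_{V'}$, hence vanishes after abelianization. Your final worry about ``extra'' brackets in the $\flat$-subspace not arising from the $\lplus_V$-action concerns injectivity, which the lemma does not claim, so it can be dropped.
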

\begin{proof} The image of $\beta$ lies in $[\lplus_{V'}]^\flat$, and after abelianization we get a surjective map to $[\lplus_{V'}^{\text{ab}}]^\flat$.  Now $\lplus_V\cdot L_V$ is in the kernel of this map, since
acting by an element of $\lplus_V$ on $L_V$ corresponds to taking a commutator in $\lplus_{V'}$.
\end{proof}

We also have   a map $[\lplus_{V'}^{\text{ab}}]^\flat\to H_1(\hairy_{V'})^\flat$ induced by the trace.  Altogether we have the following chain of maps:
$$\left(\ext (\lplus_{\infty}^{\text{ab}})^*\otimes [H_1(\hairy_{V_\infty'}^\flat)]^*\right)
\to
\left(\ext (\lplus_{\infty}^{\text{ab}})^*\otimes [(\lplus^{\text{ab}}_{V'_{\infty}})^\flat]^*\right)
\to
\left(\ext (\lplus_\infty^{\text{ab}})^*\otimes \overline{L}_{\infty}^*\right)
\to
H^*(\lplus_\infty; L_\infty^*),$$
inducing
$$
P\left(\ext (\lplus_{\infty}^{\text{ab}})^*\otimes (H_1(\hairy_{V_\infty'}^\flat))^*\right)^{\SP}
\to P H_c^*(\lplus_\infty; L_\infty^*)^{\SP}
$$

As in the last section, we can identify pieces of $P\left(\ext (\lplus_{\infty}^{\text{ab}})^*\otimes (H_1(\hairy_{V_\infty'}^\flat))^*\right)^{\SP}$ to give us classes in $P H_c^*(\lplus_\infty; L_\infty^*)^{\SP}$ which correspond via Gray's theorem to cycles for the homology of $\Aut(F_n)$.  We illustrate this  in the following theorem.

\begin{theorem}\label{thm:autclasses}
There is a series of cycles $e_{4k+3}\in Z_{4k+3}(\Aut(F_{2k+3});\F)$ for $k\geq 1$. \end{theorem}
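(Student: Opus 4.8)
The plan is to imitate the construction of the cuspidal cycles of Theorem~\ref{thm:symmod}, but with two changes: first, in the Eichler--Shimura decomposition $H^1_-\cong M^0_{m+2}\oplus E_{m+2}$ used there, replace the cuspidal summand $M^0_{m+2}$ by the one--dimensional Eisenstein summand $E_{m+2}$; second, run the argument through the $\flat$--complex and Gray's theorem (Theorem~\ref{authm}) rather than through Kontsevich's theorem, which is what produces the degree shift. Since the dual form of Gray's theorem reads $PH^j_c(\lplus_\infty;L_\infty^*)^{\SP}\cong\bigoplus_{r\ge 2}H_{2r-1-j}(\Aut(F_r);\Q)$, and $2r-1-j=4k+3$ precisely when $r=2k+3$ and $j=2$, it suffices to produce, for each $k\ge 1$, a nonzero primitive $\SP$--invariant class in $PH^2_c(\lplus_\infty;L_\infty^*)^{\SP}$; its image under Gray's isomorphism is then the desired cycle $e_{4k+3}\in Z_{4k+3}(\Aut(F_{2k+3});\F)$.

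First I would pin down the Eisenstein class. By Theorem~\ref{thm:modular} together with Example~\ref{ex:ftwok}, for $k\ge 1$ the module $\SF{(2k+1,1)}V$ occurs in $H_{1,2}(\hairy)$ with a copy coming precisely from the Eisenstein series $E_{2k+2}$, a representative being the polynomial $f_{2k+2}$, i.e.\ a hairy theta graph carrying $2k+2$ hairs. Using the surjection $\overline{L}_V\twoheadrightarrow[\lplus^{\text{ab}}_{V'}]^\flat$ and the trace $[\lplus^{\text{ab}}_{V'}]^\flat\to H_1(\hairy_{V'}^\flat)$, this lifts to a class in $H_1(\hairy_{V_\infty'}^\flat)$, represented by a rank--$2$ hairy graph carrying one basepoint hair together with $V$--labelled hairs. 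I would first check, by the same IHX and antisymmetry manipulations as in Proposition~\ref{thm:ranktwo}, that this $\flat$--version survives in homology and still detects $E_{2k+2}$.

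Next, along the chain of maps
$$\bigl(\ext(\lplus^{\text{ab}}_\infty)^*\otimes(H_1(\hairy_{V_\infty'}^\flat))^*\bigr)\longrightarrow\cdots\longrightarrow H^*(\lplus_\infty;L_\infty^*),$$
I would form an element of $\ext^2(\lplus^{\text{ab}}_\infty)^*\otimes(H_1(\hairy_{V_\infty'}^\flat))^*$ by wedging two carefully chosen generators of $\lplus^{\text{ab}}_\infty$ — for instance the non--$\flat$ Eisenstein class in the rank--$2$ part of $\lplus^{\text{ab}}_\infty$ together with a tripod in $\ext^3V$ or a Morita loop in $\bigoplus_j S^{2j+1}V$ — and tensoring with the dual of the $\flat$--Eisenstein class, choosing the hair--matchings so that (a)~every non--basepoint $V$--label is contracted in a symplectic pair and (b)~the resulting connected hairy graph has rank exactly $2k+3$. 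Nonvanishing of the associated $\SP$--invariant is a computation in classical invariant theory in the spirit of Lemma~\ref{lemma:SpCalculation}: one uses that $[\SpF{\lambda}V\otimes\SpF{\mu}V\otimes\SpF{\nu}V]^{\SP}$ is governed by Littlewood--Richardson--type multiplicities, and checks that the partition $(2k+1,1)$ and the partitions of the chosen generators fit together so that the contraction is nonzero (the parity of the total number of boxes already constrains the admissible choices). Connectedness of the graph makes the class primitive, so it defines an element of $PH^2_c(\lplus_\infty;L_\infty^*)^{\SP}$, and Gray's theorem delivers $e_{4k+3}$.

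I expect the main obstacle to be exactly this last bundle of combinatorics: identifying which Schur components of $\lplus^{\text{ab}}_\infty$ pair nontrivially with $\SF{(2k+1,1)}V$ under the symplectic contraction, verifying that the chosen matching genuinely yields a connected graph of rank $2k+3$ and cohomological degree $2$ (rather than something smaller, which would land the cycle in a different $\Aut(F_r)$), and confirming that the small cases are non--degenerate — in particular $k=1$, where the construction should reproduce Gerlits' class $e_7\in Z_7(\Aut(F_5))$. The remaining ingredients — well--definedness of the maps in the chain, the identification of the relevant trace image with the Eisenstein class, and the degree behaviour of Gray's isomorphism — are either already available in the excerpt or routine.
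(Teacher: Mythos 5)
Your high--level strategy (Gray's theorem, the $\flat$--complex, the Eisenstein part of $H_{1,2}$, a connected rank--$(2k+3)$ hairy graph) is the paper's, but your actual construction is different from the paper's and its degree bookkeeping does not work. The paper builds $e_{4k+3}$ from exactly \emph{one} factor of $\lplus^{\text{ab}}_\infty$ --- the Morita loop summand $S^{2k+1}V$ --- paired against the coefficient--side class $\SF{(2k+1,1)}^\flat(V')\cong V\otimes S^{2k}V$ inside $H_1(\hairy^\flat_{V'})$, the nonvanishing being the elementary computation $[S^{2k+1}V\otimes(V\otimes S^{2k}V)]^{\SP}\cong\F$; graphically this is a loop with $2k+1$ hairs matched to a theta graph carrying the basepoint hair and $2k+1$ further hairs, a connected graph of rank $2k+3$ made of \emph{two} spiders. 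You instead wedge \emph{two} elements of $\lplus^{\text{ab}}_\infty$ and tensor with the flat Eisenstein class, arguing that $2r-1-j=4k+3$ forces $j=2$, where you take $j$ to be the number of wedge factors. But in Gray's correspondence the coefficient module (the based spider coming from $L_\infty^*$) also contributes to the degree: a class represented by a based hairy graph of rank $r$ with $v$ spiders, \emph{including} the based one, corresponds to degree $2r-1-v$ for $\Aut(F_r)$. This is exactly the calibration under which the paper's two--spider rank--$5$ graph gives $e_7\in H_7(\Aut(F_5))$, i.e.\ Gerlits' class (compare Theorem~\ref{thm:Aut-Out} with $s=1$: one--spider graphs detect the vcd $2r-2$, two--spider graphs detect $2r-3$). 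Your graph has three spiders (Eisenstein piece, loop or tripod, based theta), so for rank $2k+3$ it lands in $H_{4k+2}(\Aut(F_{2k+3}))$ --- even degree, not $4k+3$. To reach degree $4k+3$ you are forced to use exactly one $\lplus^{\text{ab}}$ factor plus the coefficient spider, which is precisely the paper's construction rather than yours. (Also, $2r-1-j=4k+3$ does not ``precisely'' force $(r,j)=(2k+3,2)$; it is the rank of the representing graph that selects $r$.)

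Two further gaps. First, you invoke ``the non--$\flat$ Eisenstein class in the rank--$2$ part of $\lplus^{\text{ab}}_\infty$''; the paper only proves that $\Tr_*$ embeds $\lplus^{\text{ab}}_\infty$ into $H_1(\hairy_\infty)$ and explicitly defers the determination of its image to the sequel, so it is not available here that the Eisenstein summand of $H_{1,2}$ comes from the abelianization. The paper's proof deliberately avoids this issue by routing the Eisenstein--related class through the coefficient side, via $\overline{L}_\infty\twoheadrightarrow[\lplus^{\text{ab}}_{V'}]^\flat\to H_1(\hairy^\flat_{V'})$, and by using only the well--understood Morita loop summand on the $\lplus^{\text{ab}}$ side. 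Second, the nonvanishing of the $\SP$--invariant pairing, which you defer to unspecified Littlewood--Richardson combinatorics and yourself flag as the main obstacle, is the actual content of the proof; the paper settles it by identifying $\SF{(2k+1,1)}^\flat(V')$ with $V\otimes S^{2k}V$ (using the presentation of $\SF{(2k+1,1)}(V')$ as a quotient of $V\otimes S^{2k+1}V'$) and then applying the one--line invariant computation above, with connectedness of the resulting graph supplying primitivity.
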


\begin{proof}
Assume, to begin with, that $V$ (and therefore $V'$) is finite dimensional.
We first identify some convenient submodules of $H_1(\hairy_{V'}^\flat)$. In particular we look at the part of $\hairy_{1,2}$ with $2k$ hairs. In Theorem~\ref{thm:modular}, it is shown that the module $\SF{(2k-1,1)}(V')$ appears with multiplicity $s_{2k}+1$, where the ``+1" is contributed by Eisenstein series.
 Let $\SF{(2k-1,1)}^\flat(V')\subset \SF{(2k-1,1)}(V')$ be the subspace generated by tensors where the vector $b$ appears exactly once and its dual $b^*$ does not appear at all. We claim that $\SF{(2k-1,1)}^\flat(V')\cong V\otimes S^{2k-2}V$. One way to see this is to get an explicit description of $\SF{(2k-1,1)} (V')$
via the exact sequence $$0\to S^{2k}V'  \to V\otimes S^{2k-1}V'\to \SF{(1,2k-1)}(V')\to 0.$$
The left-hand map is defined by $v_1\otimes\cdots\otimes v_{2k}\mapsto \sum_{i=1}^{2k} v_i\otimes v_1\cdots \hat{v_i}\cdots v_{2k}.$ Any element in $\SF{(2k-1,1)}^\flat(V')$ can be represented by an element  $v_0\otimes v_1\cdots v_{2k-1}$, where exactly one of the $v_i$ is equal to $b$. If $v_0=b$, then modulo the image of the left-hand map, it can be rewritten as a sum of terms where the $b$ has moved to the right of the tensor. Thus $\SF{(2k-1,1)}^\flat(V')$ is spanned by elements $v_0\otimes bv_1\cdots v_{2k-2}$, where $v_i\in V$. These form a space isomorphic to $V\otimes S^{2k-2}V$ as claimed.

Let $S^{2k-1}V$ be the summand of $\lplus_V^{\rm{ab}}$ whose generators are represented by an oriented loop with $2k-1$ hairs attached. Then $[S^{2k-1}V\otimes\SF{(2k-1,1)}^\flat(V')]^{\SP}\cong [S^{2k-1}V\otimes (V\otimes S^{2k-2}V)]^{\SP}\cong \F$.
Now taking the inverse limit as the dimension of $V$ increases, we get a 1-dimensional subspace of
$(\lplus_{\infty}^{\text{ab}})^*\otimes (H_1(\hairy_{V_\infty'})^\flat)^*$.
This is represented by pairing the hairs of the loop with $2k-1$ hairs to the hairs of  a theta graph with one basepoint hair and $2k-1$ other hairs (see Figure~\ref{Eseven} for $k=2$). This graph is connected so represents a primitive class, which we denote $e_{4k-1}$, for $k\geq 2$.
\end{proof}
\begin{figure}
\begin{center}
\ifpdf\includegraphics[width=2in]{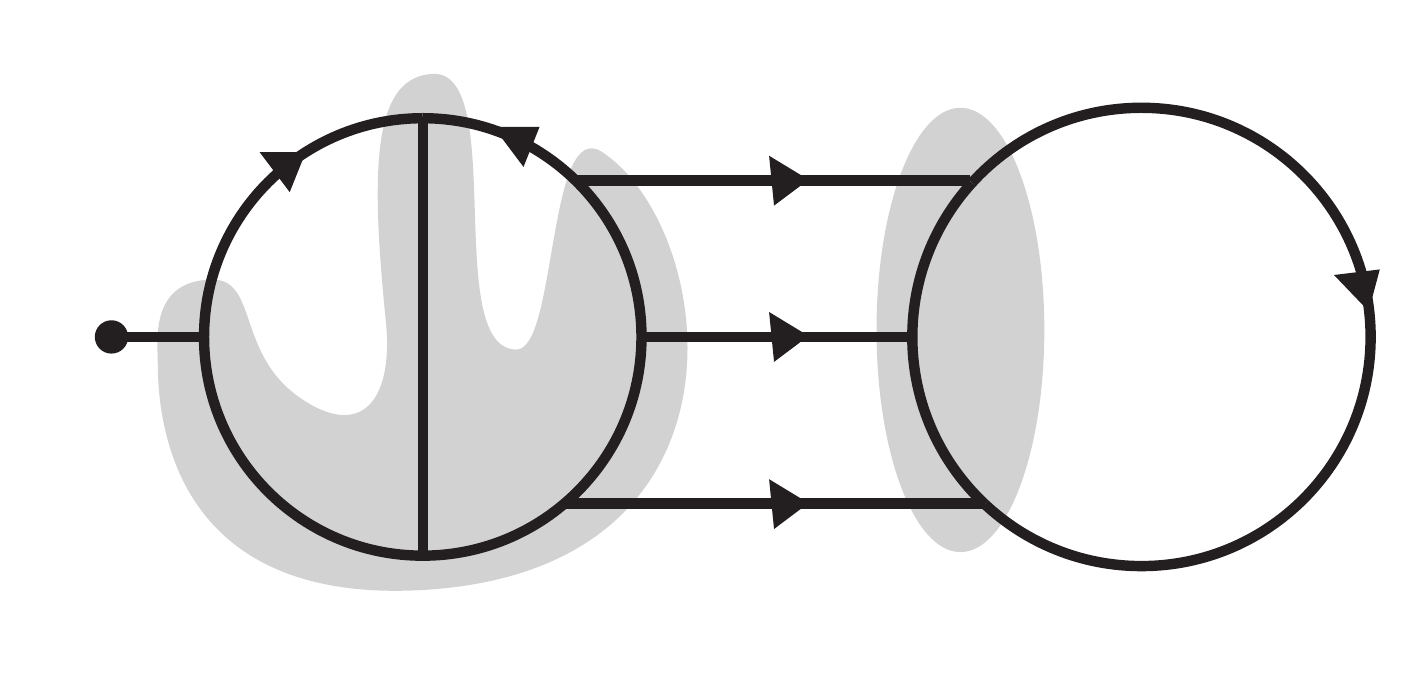}\fi
\caption{Hairy Lie graph representation of $e_7$}\label{Eseven}
\end{center}
\end{figure}

Computer calculations show that the first two cycles $e_7$ and $e_{11}$ are nontrivial in homology.  This brings the total list of known nontrivial rational homology groups  for $\Aut(F_n)$ and $\Out(F_n)$ to: $H_4(\Out(F_4);\mathbb Q)$, $H_8(\Out(F_6);\mathbb Q)$, $H_{12}(\Out(F_8);\mathbb Q)$, $H_4(\Aut(F_4);\mathbb Q)$, $H_7(\Aut(F_5);\mathbb Q)$ and $H_{11}(\Aut(F_7);\mathbb Q)$. The first three classes are part of Morita's original series. The fact that $H_{12}(\Out(F_8);\mathbb Q)\neq 0$ was recently proven by Gray~\cite{Gray}. The fact that $H_7(\Aut(F_5);\mathbb Q)\neq 0$ was proven by Gerlits~\cite{Gerlits}, though the interpretation in terms of the Eisenstein series is new.

\begin{remark}
In all of these cases, except $H_{12}(\Out(F_8))$ and $H_{11}(\Aut(F_7))$ which are unknown, computer calculations due to Gerlits and Ohashi~\cite{Gerlits, Ohashi} show that the homology spaces are one dimensional, so that these classes generate everything.
\end{remark}

\section{Hairy Lie graphs and automorphisms of punctured 3-manifolds }\label{sec:thornedgraphs}
Let $M_{n,s}$ be the compact $3$-manifold obtained from the connected sum of $n$ copies of $S^1 \times S^2$ by deleting the interiors of $s$ disjoint balls. In   \cite{HV2}  the group $\Gamma_{n,s}$  is defined to be the quotient of the mapping class group of $M_{n,s}$ by the normal subgroup generated by Dehn twists along embedded $2$-spheres. By a theorem of Laudenbach~\cite{Laudenbach}
$\Gamma_{n,0}\cong \Out(F_n)$ and $\Gamma_{n,1}\cong \Aut(F_n)$.

Hairy graph homology is related to the groups $\Gamma_{n,s}$ as follows.
Let $\hairy^{n,s}_V$ be the part of the hairy Lie graph complex  generated by connected graphs of rank $n$ with $s$ hairs.

\begin{theorem}\label{thm:Aut-Out}
There are isomorphisms
$$H_k(\hairy^{n,s}_V)\cong  H^{2n+s-2-k}(\Gamma_{n,s};V^{\otimes s})_{\Sigma_s}\cong H^{2n+s-2-k}(\Gamma_{n,s};\F)\otimes_{\Sigma_s}V^{\otimes s},$$
where the symmetric group $\Sigma_s$ acts simultaneously on $C^*(\Gamma_{n,s})$ and $V^{\otimes s}$.
\end{theorem}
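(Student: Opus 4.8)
The plan is to adapt to $\Gamma_{n,s}$ the identification of Lie graph homology with the cohomology of $\Out(F_n)$ from \cite{CV}, Section~3 --- the same mechanism already used for $H_{1,r}$ in the proof of Theorem~\ref{thm:hairylie} --- but now using a spine for $\Gamma_{n,s}$ in place of the spine of Outer space and running the argument in every homological degree. First I would invoke from \cite{HV2} (compare \cite{HV}) a contractible cube complex $K_{n,s}$ on which $\Gamma_{n,s}$ acts with finite point stabilizers, whose $j$-cubes are indexed by a marked rank-$n$ graph carrying $s$ labelled leaves together with a $j$-edge forest $\Phi$ of non-hair edges, and whose cellular coboundary is a sum $\delta_C+\delta_E$, where $\delta_C$ blows up a vertex of valence $\geq 4$ into a forest edge and $\delta_E$ enlarges the forest $\Phi$ by one edge. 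Since $\Gamma_{n,s}$ acts on $V^{\otimes s}$ through the surjection $\Gamma_{n,s}\twoheadrightarrow\Sigma_s$ permuting the $s$ leaves, Proposition~\ref{brown} applies and shows that $C^*(K_{n,s})\otimes_{\Gamma_{n,s}}V^{\otimes s}$ is a cochain complex computing $H^*(\Gamma_{n,s};V^{\otimes s})$.

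The next step is to match this complex with $\hairy^{n,s}_V$. A connected rank-$n$ graph with $s$ leaves all of whose internal vertices are trivalent has $2n+s-2$ internal vertices, so the top cubes of $K_{n,s}$ have dimension $2n+s-3$; exactly as in the passage to $\bar C^*$ in the proof of Theorem~\ref{thm:hairylie} (see \cite{CV}, Section~3.1), dividing $C^*(K_{n,s})$ by $\im(\delta_C)$ imposes the IHX relations --- including those running through hair edges, which are precisely the relations of Lemma~\ref{order} --- and yields a quasi-isomorphic complex $\bar C^*(K_{n,s})$. In cochain degree $2n+s-2-k$ the generators of $\bar C^*(K_{n,s})\otimes_{\Gamma_{n,s}}V^{\otimes s}$ are connected rank-$n$ graphs with $s$ leaves labelled by vectors of $V$, all of whose internal vertices are trivalent and whose forest $\Phi$ has $k$ components as a spanning forest of the internal vertices (so $|\Phi|=2n+s-2-k$), taken modulo IHX and antisymmetry and up to the simultaneous $\Sigma_s$-action on the leaves and their labels. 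That is precisely the chain group $C_k\hairy^{n,s}_V$ in its forested presentation, with $\delta_E$ corresponding to $\bdry_\hairy$. Re-indexing $C_k\leftrightarrow$ cochain degree $2n+s-2-k$ then gives the first isomorphism $H_k(\hairy^{n,s}_V)\cong H^{2n+s-2-k}(\Gamma_{n,s};V^{\otimes s})$; the cases $s=0$ and $s=1$ specialize, via Laudenbach's theorem, to $\Out(F_n)$ and $\Aut(F_n)$.

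For the second isomorphism I would set $P=\ker(\Gamma_{n,s}\to\Sigma_s)$, which acts trivially on $V^{\otimes s}$, and use the identification $C^*(K_{n,s})\otimes_{\Gamma_{n,s}}V^{\otimes s}\cong\bigl(C^*(K_{n,s})\otimes_P\F\bigr)\otimes_{\Sigma_s}V^{\otimes s}$. Because $\F$ has characteristic $0$ the group algebra $\F[\Sigma_s]$ is semisimple, so $(-)\otimes_{\Sigma_s}V^{\otimes s}$ is exact and commutes with taking homology; this yields $H^*(\Gamma_{n,s};V^{\otimes s})\cong H^*(\Gamma_{n,s};\F)\otimes_{\Sigma_s}V^{\otimes s}$, where $H^*(\Gamma_{n,s};\F)$ is understood to carry the residual $\Sigma_s$-action coming from its computation via $C^*(K_{n,s})\otimes_P\F$ --- which also accounts for the $\Sigma_s$-coinvariants appearing in the statement.

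The step I expect to be the main obstacle is the sign bookkeeping hidden in the dictionary of the second paragraph: converting a hairy-graph orientation in the forested normalization of \cite{CV} --- an ordering of the components of the forest together with orientations of the non-forest edges --- into a cube orientation of $K_{n,s}$, which is an ordering of the edges of $\Phi$, and verifying that this conversion intertwines $\bdry_\hairy$ with $\delta_E$ together with their Chevalley--Eilenberg-style signs. This is the same verification performed in \cite{CV}, Section~3.1, but it must be redone in the presence of the $s$ hairs and the $\Sigma_s$-twisted coefficients; I expect it to be routine but tedious. Everything else --- the precise cube structure of $K_{n,s}$, and the fact that blow-ups through hair edges reproduce exactly the IHX and antisymmetry relations of $\hairy$ --- should follow directly from \cite{HV,HV2} and the definitions of Section~\ref{complex}.
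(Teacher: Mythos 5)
Your combinatorial dictionary in the second paragraph --- the cube complex from \cite{HV2} with coboundary $\delta_C+\delta_E$, killing $\im(\delta_C)$ to impose the IHX relations, the count that trivalent rank-$n$ graphs with $s$ leaves have $2n+s-2$ internal vertices, and the matching of $\delta_E$ with $\bdry_\hairy$ under the forested presentation --- is exactly the adaptation of the $s=0$ argument that the paper carries out, so that part of your plan is sound.

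The genuine gap is the premise that ``$\Gamma_{n,s}$ acts on $V^{\otimes s}$ through the surjection $\Gamma_{n,s}\twoheadrightarrow\Sigma_s$ permuting the $s$ leaves.'' No such surjection exists for the group in the statement: in \cite{HV2} the mapping classes fix each boundary sphere, and the thorns of the spaces $A_{n,s}$ carry a fixed ordering which $\Gamma_{n,s}$ preserves; hence $\Gamma_{n,s}$ acts trivially on $V^{\otimes s}$. This is already visible in the statement itself, where $\Sigma_s$ is asserted to act on $C^*(\Gamma_{n,s})$ --- an outer symmetry permuting the thorn labels; if it factored through $\Gamma_{n,s}$, the induced action on $H^*(\Gamma_{n,s};V^{\otimes s})$ would be trivial and the coinvariants in the statement would be vacuous. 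With the correct group, $C^*(K_{n,s})\otimes_{\Gamma_{n,s}}V^{\otimes s}$ has generators given by trivalent marked graphs with \emph{ordered} thorns tensored with $V^{\otimes s}$; this is not $C_*\hairy^{n,s}_V$, whose hairs are unordered but $V$-labeled, so your claimed first isomorphism without coinvariants is not what holds, and the chain-level identification in your second paragraph fails at precisely this point (your argument would instead compute the cohomology of the larger mapping class group that is allowed to permute the boundary spheres). The missing step --- the ``only wrinkle'' in the paper's proof --- is to quotient the complex by the external $\Sigma_s$-action permuting the thorn ordering simultaneously with the tensor factors of $V^{\otimes s}$, identify the coinvariant complex with $C_*\hairy^{n,s}_V$ (with the degree shift $k\leftrightarrow 2n+s-2-k$), and then use characteristic $0$, where coinvariants commute with taking homology, to get $H_k(\hairy^{n,s}_V)\cong H^{2n+s-2-k}(\Gamma_{n,s};\F)\otimes_{\Sigma_s}V^{\otimes s}\cong H^{2n+s-2-k}(\Gamma_{n,s};V^{\otimes s})_{\Sigma_s}$. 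Your semisimplicity argument for the second isomorphism is fine once recast in these terms, but as written it rests on the nonexistent surjection and on taking $P=\ker(\Gamma_{n,s}\to\Sigma_s)$, which is all of $\Gamma_{n,s}$.
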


When $s=0$, we recover the isomorphism $H_k(\hairy^{n,0})\cong H^{2n-2-k}(\Out(F_n);\F)$ described in~\cite{CV}, since the $0$-hair part of the hairy graph complex is just the Lie graph complex. When $s=1$, we get
$H_k(\hairy^{n,1}_V)\cong H^{2n-1-k}(\Aut(F_n);\F)\otimes V$.

\begin{proof}[Proof of Theorem~\ref{thm:Aut-Out}]
This is a straightforward adaptation of the proof for $s=0$, using Proposition~\ref{brown} and the spaces $A_{n,s}$ defined in~\cite{HV2} in place of Outer space. What we are calling hairs correspond to ``thorns" in \cite{HV2}. The only wrinkle is that in hairy graph homology hairs are labeled by elements of $V$ and do not come with a distinguished ordering, as in the definition of $A_{n,s}$. Hence to get an equality, we need to take the coinvariants under the action of the symmetric group which permutes the ordering on the thorns. This gives an isomorphism of $H_*(\hairy^{n,s}_V)$ with $H_*([C^*(\Gamma_{n,s})\otimes V^{\otimes s}]_{\Sigma_s})$ with some degree shift. Over the rationals, taking coinvariants commutes with homology. So we get an isomorphism $H_*([C^*(\Gamma_{n,s})\otimes V^{\otimes s}]_{\Sigma_s})\cong H^*(\Gamma_{n,s};\F)\otimes_{\Sigma_s} V^{\otimes s}$. On the other hand $C^*(\Gamma_{n,s})\otimes V^{\otimes s}\cong \Hom(C_*(\Gamma_{n,s}),V^{\otimes s})$, so we get $H_*([C^*(\Gamma_{n,s})\otimes V^{\otimes s}]_{\Sigma_s})\cong H_*(\Hom(C_*(\Gamma_{n,s}),V^{\otimes s}))_{\Sigma_s}=H^*(\Gamma_{n,s};V^{\otimes s})_{\Sigma_s}$.
\end{proof}

\end{document}